\newcommand{\ep}{\varepsilon}
\newcommand{\de}{\delta}
\newcommand{\floor}[1]{\left\lfloor {#1}\right\rfloor}
\newcommand{\ceil}[1]{\left\lceil {#1}\right\rceil}
\newenvironment{claimproof}[1]{\par\noindent\underline{Proof:}\space#1}{\leavevmode\unskip\penalty9999 \hbox{}\nobreak\hfill\quad\hbox{$\blacksquare$}}
\DeclareMathOperator*{\Odd}{{\rm O}}
\begin{document}

\def\COMMENT#1{}
\let\COMMENT=\footnote

\newtheorem{case}{Case}
\newtheorem{subcase}{Case}[case]
\counterwithin*{case}{subsection} 
\newtheorem{problem}{Problem}
\newtheorem{theorem}{Theorem}
\newtheorem{lemma}[theorem]{Lemma}
\newtheorem{proposition}[theorem]{Proposition}
\newtheorem{corollary}[theorem]{Corollary}
\newtheorem{conjecture}[theorem]{Conjecture}
\newtheorem{claim}[theorem]{Claim}
\newtheorem{definition}[theorem]{Definition}
\newtheorem{observation}[theorem]{Observation}
\newtheorem{question}[theorem]{Question}
\newtheorem{remark}[theorem]{Remark}

\numberwithin{equation}{section}
\numberwithin{theorem}{section}

\def\eps{{\varepsilon}}
\newcommand{\cC}{\mathcal{C}}
\newcommand{\cF}{\mathcal{F}}
\newcommand{\cP}{\mathcal{P}}
\newcommand{\cT}{\mathcal{T}}
\newcommand{\cL}{\mathcal{L}}
\newcommand{\ex}{\mathbb{E}}
\newcommand{\eul}{e}
\newcommand{\pr}{\mathbb{P}}
\newcommand{\feas}{\mathop{\textsc{feas}}}

\title[Integer colourings with forbidden monochromatic sums]{On the maximum number of integer colourings with forbidden monochromatic sums}
\author{Hong Liu, Maryam Sharifzadeh and Katherine Staden}
\address{Mathematics Institute and DIMAP, University of Warwick, Coventry CV4 7AL, UK}
\email{\tt{$\lbrace$h.liu.9,~m.sharifzadeh,~k.l.staden$\rbrace$@warwick.ac.uk}}
\thanks{H.L.\ was supported by EPSRC grant~EP/K012045/1 and ERC
	grant~306493 and the Leverhulme Trust
	Early Career Fellowship~ECF-2016-523.
M.S.\ and K.S.\ were supported by ERC
	grant~306493.}

\begin{abstract}
	Let $f(n,r)$ denote the maximum number of colourings of $A \subseteq \lbrace 1,\ldots,n\rbrace$ with $r$ colours such that each colour class is sum-free.
	Here, a sum is a subset $\lbrace x,y,z\rbrace$ such that $x+y=z$.
	We show that $f(n,2) = 2^{\lceil n/2\rceil}$, and describe the extremal subsets.
	Further, using linear optimisation, we asymptotically determine the logarithm of $f(n,r)$ for $r \leq 5$.
	Similar results were obtained by H\`an and Jim\'enez in the setting of finite abelian groups.
\end{abstract}

\date{\today}
\maketitle

\section{Introduction and results}

A recent trend in combinatorial number theory has been to consider versions of classical problems from extremal graph theory in the sum-free setting.
We state some examples.
The famous theorem of Mantel from 1907~\cite{mantel} states that every $n$-vertex graph with more than $\lfloor n^2/4\rfloor$ edges 
necessarily contains a triangle.
On the other hand, it is not hard to show that every subset $A$ of $[n] := \lbrace 1,\ldots,n\rbrace$ of size more than $\lceil n/2\rceil$ necessarily contains a \emph{Schur triple}, or \emph{sum}; that is, a triple $\lbrace x,y,z\rbrace$ of not necessarily distinct elements such that $x+y=z$.
The name dates back to a result of Schur from 1916 which states that every $r$-colouring of $[n]$ contains yields a monochromatic Schur triple when $n$ is sufficiently large.
Its graph-theoretic counterpart is Ramsey's theorem from 1928 which guarantees a monochromatic clique in any $r$-edge-colouring of a sufficiently large complete graph.
The triangle removal lemma of Ruzsa and Szemer\'edi~\cite{triangleremoval} states that every $n$-vertex graph containing $o(n^3)$ triangles can be made triangle-free by removing $o(n^2)$ edges.
In the sum-free setting, the removal lemma of Green~\cite{G-R}, and Kr\'al', Serra and Vena~\cite{ksv} states that every $A \subseteq [n]$ containing $o(n^2)$ Schur triples can be made sum-free by removing $o(n)$ elements.
Erd\H{o}s, Kleitman and Rothschild~\cite{ekr} proved that the number of $n$-vertex triangle-free graphs is $2^{n^2/4+o(n^2)}$, that is, the obvious lower bound of taking every subgraph of a maximal triangle-free graph is, in a sense, tight.
In the sum-free setting, resolving a conjecture of Cameron and Erd\H os~\cite{CE-SFConj}, Green~\cite{G-CE} and independently Sapozhenko~\cite{sap} proved that, for every $i=0,1$, there exists a constant $C_i$, depending only on the parity of $n$, such that $[n]$ contains  $(C_i+o(1))2^{n/2}$ sum-free sets. So again, the obvious lower bound is tight.

\subsection{The Erd\H{o}s-Rothschild problem for cliques in graphs}

Taking inspiration from the extremal graph theory literature, in this paper we consider another classical graph problem in the sum-free setting: the problem of Erd\H{o}s and Rothschild~\cite{ER,ER2}, which is stated as follows.
Given an $n$-vertex graph $G$ and positive integers $r,k$, say that a colouring of its edges with $r$ colours (an \emph{$r$-edge-colouring}) is \emph{valid} if there are no monochromatic copies of $K_k$.
Among all such graphs $G$, what is the maximum number $F(n,r,k)$ of valid colourings? 
Ramsey's theorem implies that any graph containing a sufficiently large clique has no valid colourings.
Clearly, any colouring of a $K_k$-free graph is valid. Tur\'an's theorem implies that the largest such graph is $T_{k-1}(n)$, the complete balanced $(k-1)$-partite graph. Thus we obtain the bound
\begin{equation}\label{eq-tl}
	F(n,r,k) \geq r^{t_{k-1}(n)},
\end{equation}
where $t_{k-1}(n)$ is the number of edges in $T_{k-1}(n)$.
Erd\H{o}s and Rothschild conjectured that this trivial lower bound is tight when $n$ is large and $(r,k)=(2,3)$ (i.e.~one uses two colours and forbids monochromatic triangles), and further, that $T_2(n)$ is the unique extremal graph.
This was confirmed for all $n \geq 6$ by Yuster~\cite{yuster}, who himself extended the conjecture to larger cliques in the case $r=2$.
This was in turn verified by Alon, Balogh, Keevash and Sudakov~\cite{abks}, showing that for $r\in \{2,3\}$, equality holds in~\eqref{eq-tl}, and $T_{k-1}(n)$ is the unique extremal graph.
They further showed that for all $r,k \in \mathbb{N}$ such that $k \geq 3$ and $r \geq 2$, the limit
$
\lim_{n \rightarrow \infty}\log_r F(n,r,k)/\binom{n}{2}
$
exists.
In other words, there is some $h(r,k)$ such that $F(n,r,k) = r^{h(r,k) \binom{n}{2} + o(n^2)}$.
However, for every other choice of $(r,k)$, there exists a constant $c=c(r,k)$ such that $h(r,k) > (k-2)/(k-1) + c$.
Note that the edge density of $T_{k-1}(n)$ tends to $(k-2)/(k-1)$ with $n$, so this says that the trivial lower bound in~\eqref{eq-tl} is \emph{not} correct for all other choices of $(r,k)$.
Observe that this implies that extremal graphs must therefore contain many copies of the forbidden $K_k$. 
The authors were, however, able to determine $h(3,4)$ and $h(4,4)$.
The exact results in these cases were obtained by Pikhurko and Yilma~\cite{PY}, who showed that the unique extremal graphs for $F(n,3,4)$ and $F(n,4,4)$ are $T_4(n)$ and $T_9(n)$ respectively.

The problem remains unresolved, even asymptotically in the logarithmic, in all other cases.
In particular, there is no (approximate) solution when the number of colours $r$ is at least five, for any $k \geq 3$. A generalisation of the Erd\H{o}s-Rothschild problem was considered by Pikhurko, Yilma and the third author of this paper~\cite{psy}, wherein one may forbid differently-sized cliques for different colours.
A special case of the main result is that there is a certain finite optimisation problem whose maximum is equal to $h(r,k)$. Roughly speaking, they showed that to determine $F(n,r,k)$ up to an error in the exponent, one should maximise over all possible `layerings' of `small' $K_k$-free graphs.

\subsection{The Erd\H{o}s-Rothschild problem in the sum-free setting: our results}

As stated above, the purpose of this paper is to initiate the study of the Erd\H{o}s-Rothschild problem in the case when our underlying discrete structure is not $K_n$ but $[n]$, and the forbidden substructure is not a triangle but a Schur triple.
Let us formulate the problem precisely.

\medskip
\noindent

\begin{problem}\label{prob-main}
\emph{Given positive integers $n,r$, determine $f(n,r)$}, defined as follows.
For each $A \subseteq [n]$, say that a colouring $\sigma : A \rightarrow [r]$ of $A$ with $r$ colours is \emph{valid} if it contains no monochromatic sums.
In other words, $\sigma^{-1}(c)$ is a sum-free set for all colours $c$.
Let $f(A,r)$ be the number of valid colourings of $A$ and let
$$
f(n,r) := \max_{A \subseteq [n]}f(A,r).
$$
\end{problem}
\medskip
\noindent
Notice that Schur's theorem says that $f([n],r)=0$ whenever $n$ is sufficiently large (as a function of $r$).
Before stating our results, let us see what we might conjecture via analogy with the graph setting.
Here, for $r \in \lbrace 2,3\rbrace$, the unique extremal graph was the largest triangle-free graph.
It is not hard to see that every largest sum-free sets in $[n]$ has size at most $\lceil n/2 \rceil$, and the only subsets attaining this bound are
\begin{itemize}
\item[(i)] $\Odd := \lbrace 1,3,5,\ldots,2\lceil n/2 \rceil-1\rbrace$ and
\item[(ii)] $I_2 := \lbrace \lfloor n/2 \rfloor+1,\lfloor n/2\rfloor+2,\ldots,n\rbrace$.
\end{itemize}
Additionally, if $n$ is even, the set $\{n/2,\ldots,n-1\}$ is also a largest sum-free subset.

Our first theorem is an exact result for two colours, which states that the largest sum-free subsets of $[n]$ are also extremal subsets for Problem~\ref{prob-main}.

\begin{theorem}\label{2colours}
There exists $n_0 >0$ such that for all integers $n \geq n_0$, we have
$$
f(n,2) = 2^{\lceil n/2\rceil}.
$$
Moreover, the only extremal subsets are $\Odd$, $I_2$; and if $n$ is even, we additionally have $\{n/2,\ldots,n-1\},\{n/2,\ldots,n\}$.
\end{theorem}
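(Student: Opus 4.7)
The lower bound is established by direct calculation on each of the four candidate sets. The three sum-free sets $\Odd$, $I_2$ and (for even $n$) $\{n/2,\ldots,n-1\}$ each have cardinality $\lceil n/2\rceil$, so every one of their $2^{\lceil n/2\rceil}$ $2$-colourings is automatically valid. For $A = \{n/2,\ldots,n\}$ with $n$ even, every sum $x+y$ of two elements of $A$ satisfies $x+y\ge n$, with equality iff $x=y=n/2$; hence the only Schur triple in $A$ is $\{n/2,n/2,n\}$, and a $2$-colouring is valid precisely when $n/2$ and $n$ receive different colours, giving $2\cdot 2^{n/2-1}=2^{\lceil n/2\rceil}$ valid colourings.

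For the upper bound, suppose $A\subseteq[n]$ satisfies $f(A,2)\ge 2^{\lceil n/2\rceil}$; the aim is to show that $A$ is one of the four sets above. The trivial bound $f(A,2)\le 2^{|A|}$ gives $|A|\ge\lceil n/2\rceil$, with equality forcing every $2$-colouring of $A$ to be valid, so $A$ is sum-free; the classification of maximum sum-free subsets of $[n]$ then delivers the first three extremal sets. Thus assume $|A|>\lceil n/2\rceil$. The plan is to combine two tools. The first is a product bound over disjoint Schur triples,
$$
f(A,2)\;\le\;2^{|A|-|\bigcup\mathcal{D}|}\prod_{T\in\mathcal{D}}\bigl(2^{|T|}-2\bigr),
$$
valid for any family $\mathcal{D}$ of pairwise disjoint Schur triples in $A$, obtained by bounding the allowed colourings on each $T$ independently. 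The second is a supersaturation estimate for sum-free sets (arising from the Green or Kr\'al'--Serra--Vena removal lemma, or alternatively the container method) guaranteeing that $|A|-\lceil n/2\rceil=t$ forces $\Omega(t)$ pairwise disjoint Schur triples in $A$. Together these imply $f(A,2)<2^{\lceil n/2\rceil}$ whenever $t$ exceeds an absolute constant, reducing the problem to the regime of bounded $t\ge 1$.

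In that remaining regime, apply the stability theorem for sum-free sets: every sum-free $S\subseteq[n]$ with $|S|\ge\lceil n/2\rceil-O(1)$ lies within bounded Hamming distance of $\Odd$, $I_2$, or (for even $n$) $\{n/2,\ldots,n-1\}$. The hypothesis $f(A,2)\ge 2^{\lceil n/2\rceil}$, combined with a direct count of sum-free subsets of $A$ of each size, forces the existence of valid colourings whose larger colour class has size $\lceil n/2\rceil-O(1)$; applying stability to this colour class and using $A\supseteq S$ pins $A$ itself down to bounded Hamming distance from one of the three extremal sum-free sets. A finite case analysis over all such bounded perturbations then completes the argument.

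The principal obstacle is this final case analysis. The product bound displayed above is tight for $A=\{n/2,\ldots,n\}$, whose single Schur triple is of $x+x=z$ form contributing factor $1/2$, precisely cancelling the factor $2$ from the extra element. For any other perturbation, the Schur triples typically share elements and are therefore not pairwise disjoint, so the product bound is not sharp and must be replaced by a more careful count—typically by conditioning on the colours of the perturbing elements and counting valid completions of the remainder—to verify the strict inequality $f(A,2)<2^{\lceil n/2\rceil}$. Executing this in every case, with constants tight enough to exclude every perturbation except $\{n/2,\ldots,n\}$, is the crux of the proof.
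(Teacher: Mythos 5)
Your lower bound is correct and matches the paper's. The upper bound, however, has a genuine gap at the reduction step. You claim that the product bound over disjoint Schur triples, combined with a supersaturation estimate giving $\Omega(t)$ disjoint triples when $|A|=\lceil n/2\rceil+t$, forces $f(A,2)<2^{\lceil n/2\rceil}$ once $t$ is large. The arithmetic does not work. A disjoint triple of three distinct elements contributes a factor $6\cdot 2^{-3}=3/4$ against the factor $2^{t}$ gained from the $t$ extra elements, so you need at least $t/\log_2(4/3)\approx 2.41\,t$ pairwise disjoint triples, whereas the greedy argument (remove a triple while the set is still larger than $\lceil n/2\rceil$) only yields $t/3$. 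Worse, no supersaturation statement of the required strength is true: for $A=\Odd\cup\{2,4,\ldots,2t\}$ every Schur triple contains an even element, so $A$ has at most $t$ pairwise disjoint Schur triples, and your product bound gives only $f(A,2)\le 6^{t}2^{|A|-3t}=2^{\lceil n/2\rceil}(3/2)^{t}$, which is useless for every $t$. (This set is in fact far from extremal --- one checks $f(\Odd\cup\{2\},2)$ is governed by counting independent-set-like colourings of a path and is exponentially smaller than $2^{n/2}$ --- but disjoint triples cannot see this, because the relevant Schur triples all pass through a single element.) So the reduction to bounded $t$, and hence the whole upper-bound architecture, does not go through as written.

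The paper circumvents exactly this obstacle in two ways. First, it does not reduce to bounded Hamming distance directly: it uses the container theorem via Corollary~\ref{23stability} to place any near-extremal $A$ within $\eps n$ of $\Odd$ or of $I_2$. Second, to exploit Schur triples that overlap in one element, it introduces link graphs: Lemma~\ref{matching} shows that a matching of size $m$ in $L_x[A\setminus\{x\}]$ (i.e.\ $m$ Schur triples through the common element $x$, pairwise disjoint elsewhere) gives $f(A,2)\le 3^{m}2^{|A|-2m}=2^{|A|}(3/4)^{m}$, where now $m$ can be taken of order $n$ (not of order $t$) because the matching lives inside $A\cap\Odd$ or $A\cap I_2$. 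This is what rules out stray elements such as an even element in the odd case, or an element far below $n/2$ in the interval case. Finally, your "finite case analysis over bounded perturbations" is precisely where the paper's Case~2 does substantive work (the pairs $\{\delta_i,2\delta_i\}$ and the five-element configuration $\{\delta_1,\delta_2,2\delta_1,\delta_1+\delta_2,2\delta_2\}$ with at most six valid colourings); you have deferred it rather than supplied it, but the primary missing idea is the one above.
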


Note that, unlike in the graph case, in the sum-free setting with two colours, there are extremal subsets which are \emph{not} sum-free. However, they all contain at most one sum.

Unsurprisingly, and as in the graph setting, our results decrease in strength as the number of colours increases.
When $r=3$ we can obtain the following stability theorem.

\begin{theorem}\label{3colours}
For all positive integers $n$, we have
$$
f(n,3) = 3^{n/2+o(n)}.
$$
Moreover, the following holds.
For all $\eps>0$ there exist $\delta,n_0>0$ such that for all integers $n \geq n_0$, whenever $A \subseteq [n]$ satisfies $f(A,3) \geq f(n,3) \cdot 2^{-\delta n}$, we have that either $|A \bigtriangleup \Odd| \leq \eps n$; or $|A \bigtriangleup I_2| \leq \eps n$. 
\end{theorem}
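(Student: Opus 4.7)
The lower bound $f(n,3)\geq 3^{\lceil n/2\rceil}$ is immediate by taking $A=\Odd$ (or $I_2$), which is sum-free, so all $3^{|A|}$ colourings are valid. For the upper bound and stability, the plan is to apply the hypergraph container method to the Schur-triple hypergraph $H_n$ on vertex set $[n]$, whose independent sets are exactly the sum-free subsets of $[n]$. Combining supersaturation for Schur triples (any $A\subseteq[n]$ of size $(1/2+\eta)n$ contains $\Omega_\eta(n^2)$ Schur triples) with the standard container theorem of Saxton--Thomason / Balogh--Morris--Samotij yields a family $\mathcal{F}\subseteq 2^{[n]}$ with $|\mathcal{F}|=2^{o(n)}$ such that every sum-free subset of $[n]$ lies inside some $C\in\mathcal{F}$ and each $C\in\mathcal{F}$ satisfies $|C|\leq n/2+o(n)$.

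Fix $A\subseteq[n]$. In any valid $3$-colouring with classes $A_1,A_2,A_3$, each $A_i$ lies in some $C_i\in\mathcal{F}$, so
$$
f(A,3)\;\leq\;\sum_{(C_1,C_2,C_3)\in\mathcal{F}^3}\prod_{x\in A}|\{i:x\in C_i\}|,
$$
with the product interpreted as $0$ when $A\not\subseteq C_1\cup C_2\cup C_3$. Since $|\mathcal{F}|^3=2^{o(n)}$ it suffices to bound the product, which we maximise in $A$ by taking $A=C_1\cup C_2\cup C_3$. Writing $n_j$ for the number of elements lying in exactly $j$ of the $C_i$, this reduces to the linear programme: maximise $n_2\log 2+n_3\log 3$ subject to $n_1+2n_2+3n_3\leq 3n/2+o(n)$, $n_1+n_2+n_3\leq n$ and $n_j\geq 0$. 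Checking the vertices of the feasible polyhedron gives the unique optimum $(n/2)\log 3$, attained at $n_3=n/2$, $n_1=n_2=0$; that is, when the three containers coincide in a common set of size $n/2$. This yields $f(n,3)\leq 3^{n/2+o(n)}$.

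For stability, suppose $f(A,3)\geq f(n,3)\cdot 2^{-\delta n}\geq 3^{n/2-\delta' n}$ with $\delta'$ linear in $\delta$. Then some container triple contributes $3^{n/2-O(\delta' n)}$ to the above sum, and the sharpness analysis of the linear programme forces $n_3\geq(1/2-O(\delta'))n$ and $n_1+n_2\leq O(\delta' n)$. Consequently the three containers share a common intersection $C^*$ of size at least $(1/2-O(\delta'))n$, and each $C_i$ differs from $C^*$ in only $O(\delta' n)$ elements. Since each container was built to contain $o(n^2)$ Schur triples, the removal lemma for sum-free sets of Green / Kr\'al'--Serra--Vena yields a genuine sum-free set of size $(1/2-o(1))n$ within $o(n)$ of $C^*$. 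The Green--Sapozhenko / Deshouillers--Freiman--S\'os--Temkin stability theorem for near-maximum sum-free sets then forces this set, and hence $C^*$, to be within $\eps n$ of either $\Odd$ or $I_2$. Combined with $A\subseteq C_1\cup C_2\cup C_3$ and the trivial lower bound $|A|\geq n/2-\delta' n$ (from $f(A,3)\leq 3^{|A|}$), this gives the desired dichotomy $|A\bigtriangleup\Odd|\leq\eps n$ or $|A\bigtriangleup I_2|\leq\eps n$.

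The principal obstacle will be the quantitative propagation of the $o(n)$ slack through each step: first in the container theorem (using supersaturation to obtain the sharp size bound $|C|\leq n/2+o(n)$), then in a quantitative sharpness analysis of the linear programme (so that a small defect in the LP value forces a small defect in the container structure), and finally in the stability input for sum-free sets of near-maximum size, which must be strong enough to collapse the approximate common container $C^*$ onto $\Odd$ or $I_2$.
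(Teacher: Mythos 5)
Your proposal is correct and follows essentially the same route as the paper: containers for sum-free sets, reduction to the linear programme maximising $n_2\log 2+n_3\log 3$ subject to $n_1+2n_2+3n_3\leq 3n/2+o(n)$, whose near-optimality forces a common intersection of density close to $1/2$, and then the Deshouillers--Freiman--S\'os--Temkin structure theorem to place that intersection (and hence $A$) near $\Odd$ or $I_2$. The only difference is one of packaging: the paper applies the removal lemma up front to replace each container by a genuine sum-free subset before running the LP, whereas you run the LP on the containers themselves and invoke the removal lemma only at the stability stage; both work.
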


Finally, we asymptotically determine the logarithm of $f(n,r)$ when $r \in \lbrace 4,5\rbrace$.

\begin{theorem}\label{45colours}
For $r \in \lbrace 4,5\rbrace$ and all positive integers $n$, we have
$$
f(n,r) = \left(r\left\lfloor \frac{r^2}{4}\right\rfloor\right)^{\frac{n}{4}+o(n)}.
$$
\end{theorem}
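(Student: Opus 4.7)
The plan is to establish matching lower and upper bounds on $f(n,r)$ for $r \in \{4,5\}$.

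For the \emph{lower bound}, take $A := \Odd \cup I_2 \subseteq [n]$ and partition the palette as $[r] = L \sqcup R$ with $|L| = \lfloor r/2\rfloor$ and $|R| = \lceil r/2\rceil$. Consider the family of colourings $\sigma : A \to [r]$ such that each odd $a \leq n/2$ receives a colour in $L$, each even $a > n/2$ receives a colour in $R$, and each odd $a > n/2$ receives any colour in $[r]$. Every such $\sigma$ is valid, because for $c \in L$ the preimage $\sigma^{-1}(c) \subseteq \Odd$ and for $c \in R$ we have $\sigma^{-1}(c) \subseteq I_2$, both of which are sum-free. Since the three element classes partition $A$ and each has size $n/4 + O(1)$, the number of valid colourings of this form is
\[
|L|^{n/4 + O(1)} \cdot r^{n/4 + O(1)} \cdot |R|^{n/4 + O(1)} \;=\; \bigl(r\,\lfloor r^2/4 \rfloor\bigr)^{n/4 + o(n)},
\]
using $|L|\cdot|R| = \lfloor r/2\rfloor \lceil r/2\rceil = \lfloor r^2/4\rfloor$.

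For the \emph{upper bound}, the key input is a container-type theorem for sum-free sets in $[n]$, obtained from the removal lemma of Green--Kr\'al'--Serra--Vena together with the stability result of Cameron--Erd\H{o}s--Green: every sum-free subset of $[n]$ of density close to $1/2$ lies within symmetric difference $o(n)$ of one of the standard containers $\Odd$, $I_2$, or (when $n$ is even) $C_3 := \{n/2, \ldots, n-1\}$, and sum-free sets of smaller size contribute only a negligible exponential factor to the counting. A valid $r$-colouring of $A$ thus induces an assignment $T : [r] \to \{\Odd, I_2, C_3\}$ of colours to containers. Writing $x_i$ for the number of colours assigned to container $i$ (so $x_1 + x_2 + x_3 = r$) and letting $A_\tau$ denote the elements of $A$ lying in exactly the containers indexed by $\tau \subseteq \{1,2,3\}$, the number of valid colourings compatible with $T$ is at most $\prod_\tau \bigl(\sum_{i \in \tau} x_i\bigr)^{|A_\tau|}$. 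The three dominant types $\{1\}, \{1,2,3\}, \{2,3\}$ each satisfy $|A_\tau| \leq n/4 + O(1)$, reducing the optimisation over $(A, T)$ to
\[
\max_{\substack{x_1 + x_2 + x_3 = r \\ x_i \in \mathbb{Z}_{\geq 0}}} r \cdot x_1 \cdot (r - x_1) \;=\; r\,\lfloor r^2/4 \rfloor,
\]
attained at $x_1 \in \{\lfloor r/2\rfloor, \lceil r/2\rceil\}$. Summing over the $O_r(1)$ possible assignments $T$ then yields the claimed upper bound.

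The \emph{main obstacle} is turning this heuristic container-based argument into a rigorous upper bound: carefully controlling the $o(n)$ slack introduced when a colour class is only approximately contained in a standard container, verifying that no other sum-free container type in $[n]$ can improve the leading exponent (the classification of large sum-free sets is essential here), and confirming that the LP value $r\,\lfloor r^2/4\rfloor$ is indeed the global maximum over all compatible configurations $(A, T)$. A subtle consistency check: for $r \leq 3$ the single-container construction $A = \Odd$ strictly dominates the mixed one above (reflecting the different exponent in Theorem~\ref{3colours}), whereas for $r \in \{4,5\}$ the mixed construction is (weakly) optimal, as $r^2 \leq r\,\lfloor r^2/4 \rfloor$ precisely when $r \geq 4$.
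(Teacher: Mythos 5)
Your lower bound is exactly the paper's construction and is fine. The upper bound, however, rests on a structural claim that is false as stated, and this is a genuine gap rather than a technicality. You assert that every sum-free subset of $[n]$ of density close to $1/2$ is within symmetric difference $o(n)$ of one of $\Odd$, $I_2$, $C_3$, and that all other sum-free sets ``contribute only a negligible exponential factor.'' The actual structure theorem (Theorem~\ref{thm-sftypes}) gives only the trichotomy: $|S|\le 2n/5+1$, or $S\subseteq \Odd$, or $|S|\le\min(S)$. A type~(c) set with $|S|=\min(S)=0.45n$ is not $o(n)$-close to $I_2$, and a type~(a) set of size $2n/5$ can be completely unstructured (e.g.\ $\{x:x\equiv 2,3 \bmod 5\}$). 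Neither is negligible for counting purposes: assigning a colour to a class of density $2/5$ costs only a bounded factor per element in the exponent, so one must rule out that the extra overlap freedom such a class provides compensates for its smaller size. Your reduction to $\max_{x_1+x_2+x_3=r} r\,x_1(r-x_1)$ only addresses the configuration in which every colour class is (essentially) a maximum sum-free set, which is precisely the easy part of the problem.

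The paper's proof fills exactly this gap. It introduces a slack variable $a$ measuring the total deficiency $\sum(\lceil n/2\rceil-|A_i|)$ of the non-type-(a) classes, derives case-by-case linear constraints on the intersection vector $(d_1,\dots,d_r)$ and $a$ according to how many classes are of each type (Lemma~\ref{lem-generalineq}, Lemmas~\ref{lem-overlap} and~\ref{lem-middleelements}), and verifies via LP duality that each resulting linear program has optimum at most $\tfrac14\log(r\lfloor r^2/4\rfloor)+\eps$. The hardest case for $r=5$ (two odd-type, two interval-type, and one unstructured type~(a) class) requires a Cauchy--Davenport argument (Theorem~\ref{CD}) to bound the contribution of the type~(a) set. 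None of this machinery is replaceable by the clean three-container optimisation you describe, so the proposal as written does not yield the upper bound.
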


In particular, we are able to asymptotically solve the $5$-colour case of the Erd\H{o}s-Rothschild problem in the sum-free setting, in contrast to the graph setting in which it is wide open for triangles and larger cliques.
(This turns out to be a consequence of the rigid structure of large maximal sum-free sets.)
The (asymptotic) lower bounds in Theorems~\ref{2colours}--\ref{45colours} come from the facts that
$$
f(\Odd,r)=f(I_2,r) = r^{\lceil n/2\rceil}\quad\text{and}\quad f(\Odd \cup I_2,r) \geq \left(r\left\lfloor \frac{r^2}{4}\right\rfloor\right)^{\left\lfloor\frac{n}{4}\right\rfloor}.
$$
The first assertion follows from the fact that every $r$-colouring of a sum-free set $A$ is valid, so $f(A,r)=r^{|A|}$.
For the second, note that any colouring $\sigma : \Odd\cup I_2 \rightarrow [r]$, with $\sigma(x) \in \lbrace 1,\ldots,\lfloor r/2\rfloor\rbrace$ whenever $x$ is odd; and $\sigma(y) \in \lbrace \lfloor r/2\rfloor+1,\ldots,r\rbrace$ whenever $y > \lfloor n/2\rfloor$, is valid, giving the claimed bound.

Note that $r^{1/2} > (r\lfloor r^2/4\rfloor)^{1/4}$ for $r \in \lbrace 2,3\rbrace$, while this inequality becomes an equality for $r=4$, and reverses for $r \geq 5$.
It is tempting to believe that the bound in Theorem~\ref{45colours} holds for all $r \geq 4$.
However, this
is \emph{not} true. Indeed, when $r$ is large, the set $A := \Odd \cup I_2 \cup \lbrace x \in [n]: x \equiv 1,4 \mod 5\rbrace$ contains exponentially more valid $r$-colourings than $\Odd \cup I_2$. It would also be interesting to see if for every fixed integer $r \geq 6$, the limit $\lim_{n \rightarrow \infty} \log  f(n,r)/n$ exists.

\subsection{Some remarks on the methods and proofs}
An important tool in our proof is the Green's container theorem for finite abelian groups (Theorem~\ref{container}). 
The special case that we need states that, for every positive integer $n$, there is a small family $\mathcal{F}$ of subsets of $[n]$, called \emph{containers}, each of which is almost sum-free, and such that every sum-free subset of $[n]$ lies in some member of $\mathcal{F}$.

In the proof of the main result in~\cite{abks} and other Erd\H{o}s-Rothschild-type results for graphs, Szemer\'edi's regularity lemma~\cite{reg} is used to approximate a large graph $G$ by another graph of bounded size (the \emph{reduced graph}).
Then, for each valid $r$-edge-colouring $\sigma$ of $G$, for each $i \in [r]$ one can approximate the $K_k$-free subgraph $\sigma^{-1}(i)$ of $G$ by a $K_k$-free graph of bounded size.
In our proofs, for each valid $r$-colouring of $[n]$, we approximate the sum-free subset $\sigma^{-1}(i)$ of $[n]$ by a container.

We use Theorem~\ref{container} to reduce the problem of determining $f(n,r)$ to solving an optimisation problem (Problem~2) whose maximum approaches $\log  f(n,r)/n$ as $n$ tends to infinity (Theorem~\ref{problem2}).
Roughly speaking, Problem~2 involves layering sum-free subsets $A_1,\ldots,A_r$ of $[n]$ and measuring a weighted overlap $g(A_1,\ldots,A_r)$.

To attack Problem~2, we require a second important tool, namely a very strong stability theorem of Deshoulliers, Freiman, S\'os, and  Temkin~\cite{DFST}, which was recently strengthened by Tran~\cite{tuan}. This states that every sum-free subset of $[n]$ is either `small', or has a very rigid structure: either it contains only odd elements, or it somehow resembles the interval $I_2$.
Now it turns out that, if $r$ is small and $(A_1,\ldots,A_r)$ is a maximiser for Problem~2, then at most one of the $A_i$ can be small.
The rigid structure of the others means that the feasible set for Problem~2 is not too large.
In fact, for $r \in \lbrace 2,3\rbrace$, it can be easily solved at this stage, and we find that either all of $A_1,\ldots,A_r$ are close to $\Odd$; or they are all close to $I_2$.
This proves Theorem~\ref{3colours} and completes the first step of the proof of Theorem~\ref{2colours}.

To complete the proof of Theorem~\ref{2colours}, we have two cases to consider. The solution to Problem~2 when $r=2$ implies that any subset $A \subseteq [n]$ with $f(A,r)=f(n,r)$ satisfies either (1) $|A \bigtriangleup \Odd| = o(n)$; or (2) $|A \bigtriangleup I_2| = o(n)$. We use stability arguments, together with techniques from~\cite{BLST}, to obtain the exact structure of $A$.

For $r \in \lbrace 4,5\rbrace$, we find a reduction of Problem~2 to a \emph{linear} optimisation problem.
First, for each $i \in [r]$, and any feasible $(A_1,\ldots,A_r) \subseteq [n]^r$, we obtain $d_i \in [0,1]$ which are each functions of $A_1,\ldots,A_r$ and $n$ and such that $g(A_1,\ldots,A_r)$ is linear in $d_1,\ldots,d_r$.
Now, using the structural information returned from stability, we obtain constraints, linear in $d_1,\ldots,d_r$, which every maximiser $(A_1,\ldots,A_r)$ must satisfy.
This gives rise to a linear program in the variables $d_1,\ldots,d_r$.
Now, this linear program is a relaxation of Problem~2, so its maximiser may not correspond to a feasible solution $(A_1,\ldots,A_r)$ of Problem~2.
But, if we can exhibit a feasible tuple $(A_1,\ldots,A_r)$ such that the maximum $M$ of this program satisfies $M =g(A_1,\ldots,A_r)$, then $(A_1,\ldots,A_r)$ \emph{is} a maximiser of Problem~2.
Thus our task is to find enough constraints (of sufficient strength) so that this is possible.
In so doing, we will prove Theorem~\ref{45colours}.


\subsection{The Erd\H{o}s-Rothschild problem in other settings}\label{other}

Erd\H{o}s and Rothschild also considered the problem of counting monochromatic $H$-free colourings, for an arbitrary fixed graph $H$.
In~\cite{abks}, it is shown that the analogue of their main result for cliques in fact holds when $H$ is colour-critical.
Further cases including matchings, stars, paths, trees were investigated in~\cite{hkl2,hkl3}.
Other works have considered a fixed forbidden colour pattern of $H$, see~\cite{balogh,bhs,hl,hlos}.

An analogous problem for directed graphs was solved by Alon and Yuster~\cite{alonyuster}, who determined, for each $k$-vertex tournament $T$, the maximum number of $T$-free orientations of an $n$-vertex graph, when $n$ is sufficiently large.
The hypergraph analogue was addressed in~\cite{hkl,lprs,lps}.

The authors of~\cite{cdt} and~\cite{hkl} considered the problem of
counting the number of colourings of families of $r$-sets such that every colour class is $\ell$-intersecting.
A related result in the context of vector spaces over a finite field $GF(q)$ is proved in~\cite{qana}.
These results are Erd\H os-Rothschild  versions of the classical Erd\H{o}s-Ko-Rado theorem.

During the preparation of this paper, we became aware of the results of H\`an and Jim\'enez~\cite{hj} who recently studied similar questions in the setting of finite abelian groups.
Given a finite abelian group $(\Gamma,+)$, define an $r$-colouring of $A \subseteq \Gamma$ to be \emph{valid} if it has no monochromatic sum.
Let $f(\Gamma,r)$ be the maximum number of valid $r$-colourings among all subsets $A$ of $\Gamma$. The results of H\`an and Jim\'enez show a close relationship between $f(\Gamma,r)$ and the largest sum-free sets of $\Gamma$, and characterise for $r\le 5$ the extremal sets.
Their proof also uses the container lemma of Green in a similar way as described above. We remark that H\`an and Jim\'enez's result and ours do not imply one another. 


\subsection{Organisation of the paper}

Section~\ref{sec:notation} sets up the notation we will use and contains the statements of results on sum-free sets necessary for the proof.
In Section~\ref{sec:problem2} we define Problem~2, the optimisation problem whose maximum is a parameter $g(n,r)$ which is closely related to $f(n,r)$.
Then in Sections~\ref{sec:3},~\ref{sec:2} and~\ref{sec:45} we prove Theorems~\ref{3colours},~\ref{2colours} and~\ref{45colours} respectively.
We make some concluding remarks in Section~\ref{sec:conclude}.

\section{Notation and Preliminaries}\label{sec:notation}

In this section we define the notation that we will use, and some results on sum-free subsets which are needed in our proofs.

\subsection{Notation}

Given integers $m,n$ such that $m \leq n$, we write $[m,n]$ to denote the set $\lbrace m,\ldots,n\rbrace$, and write $[n] := [1,n]$.
For a set $A\subseteq [n]$, we define $d(A)=|A|/n$ and $\min(A)$ to be the \emph{density} and the minimum element of $A$, respectively. We also define $E$ and $\Odd$ to be respectively the set of all even and odd integers in $[n]$. 
As we defined earlier, $I_2 := [\lfloor n/2\rfloor+1,n]$ and $I_1 := [\lfloor n/2\rfloor]$.
(So we suppress the dependence on $n$ in the notation).
Given $A,B \subseteq \mathbb{Z}$, we write $A+B := \lbrace a+b : a \in A, b \in B\rbrace$.
For any $x \in \mathbb{Z}$, we also write $x \cdot A := \lbrace xa: a \in A\rbrace$. 
Logarithms will always to taken to the base $2$.

\subsection{Tools for sum-free subsets}

The first result we state is a very strong stability theorem for sum-free subsets due to Deshouillers, Freiman, Temkin and S\'os~\cite{DFST}.
It states that every large sum-free $S \subseteq [n]$ either contains no even number, or is, in a certain sense, close to the interval $I_2$.

\begin{theorem}[\cite{DFST}]\label{thm-sftypes}
	Every sum-free set $S$ in $[n]$ satisfies at least one of the following conditions:
	\begin{itemize}
		\item[(a)] $|S| \leq 2n/5+1$;
		\item[(b)] $S$ consists of odd numbers;
		\item[(c)] $|S| \leq \mathrm{min}(S)$.	
				\end{itemize}
\end{theorem}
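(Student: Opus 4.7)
The plan is to prove the contrapositive: if $S \subseteq [n]$ is sum-free with $|S| > 2n/5+1$ and $|S| > \min(S)$, then $S$ consists entirely of odd numbers. Set $m := \min(S)$. An easy reduction handles the case $m \geq (n+1)/2$, for then $|S| \leq n-m+1 \leq m$ contradicts the hypothesis; so we may assume $m \leq n/2$.

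The core observation is that any sum-free $S$ satisfies $S \cap (S-S)_{>0} = \emptyset$, since $a-b \in S$ with $a,b \in S$ would yield the Schur triple $(a-b, b, a)$. Hence $S$ and $(S-S)_{>0}$ are disjoint subsets of $[1,n]$, giving $|S| + |(S-S)_{>0}| \leq n$. The weak bound $|S-S| \geq 2|S|-1$ alone is insufficient, so the next step is to appeal to Freiman's $3k-4$ theorem: either $|(S-S)_{>0}| \geq \tfrac{3}{2}|S| - O(1)$, in which case the disjointness inequality already forces $|S| \leq 2n/5$, contradicting our hypothesis; or $S$ lies inside an arithmetic progression $P$ of length at most $2|S|-3$.

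So we may assume $S \subseteq P$ with common difference $d$. Since $P \subseteq [1,n]$ has length $\geq |S|$, the span satisfies $(|S|-1)d \leq n-1$, and combining with $|S| > 2n/5$ forces $d \in \{1,2\}$. If $d=2$, then $S$ lies in a single parity class; but if that class were the evens, then $\tfrac{1}{2} \cdot S \subseteq [\lfloor n/2\rfloor]$ would be sum-free of density exceeding $4/5$, contradicting the classical $1/2$ upper bound on densities of sum-free sets. Therefore $d=2$ forces $S \subseteq \Odd$, which is the desired conclusion. If $d=1$, then $S$ sits inside an interval $[m, m+\ell-1]$ with $\ell \leq 2|S|-3$; sum-freeness applied to the shift by $m$ (so $(m+S) \cap S = \emptyset$), together with the count $|[m, m+\ell-1] \setminus S| = \ell - |S|$ and a careful tally of where $m+S$ falls inside the interval, ultimately yields $|S| \leq m$, contradicting our hypothesis.

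The main obstacle is the final counting step in the $d=1$ case, where the arithmetic progression structure of $S$ must be balanced tightly against the shift-disjointness forced by sum-freeness in order to squeeze out the bound $|S| \leq m$. This is the most delicate portion of the original Deshouillers--Freiman--S\'os--Temkin argument, and indeed the precise threshold $2n/5+1$ is tuned so that this interval step, rather than the coarser Freiman-theorem reduction, determines the exact cutoff.
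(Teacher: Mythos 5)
First, note that the paper does not prove this statement at all: Theorem~\ref{thm-sftypes} is imported verbatim from Deshouillers--Freiman--S\'os--Temkin \cite{DFST} (with Tran's strengthening \cite{tuan} also cited), so there is no in-paper argument to compare yours against. Your proposal must therefore be judged on its own. Several of your steps are sound: the reduction to $m:=\min(S)\le n/2$, the disjointness of $S$ and $(S-S)_{>0}$, the dichotomy via a Freiman $3k-4$ type theorem for $S-S$ (the large-difference branch gives $|S|\le 2n/5+4/5$, which is consistent with (a)), the exclusion of common difference $d\ge 3$, and the $d=2$ case via halving. This is indeed the general flavour of the Freiman-style arguments used in \cite{DFST}.

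The genuine gap is the $d=1$ case, which is the heart of the theorem and which you assert rather than prove. Writing $M:=\max(S)$ and $L:=M-m+1\le 2|S|-3$, the disjointness $(m+S)\cap S=\emptyset$ together with any of the natural tallies yields only the trivial bound: splitting $S$ at $M-m$ gives $|S|\le m+\bigl(L-|S|\bigr)$, i.e.\ $2|S|\le L+m\le 2|S|-3+m$, which is vacuous; and the Bogolyubov-style observation that $[1,2|S|-L-1]\subseteq (S-S)_{>0}$ only recovers $2|S|\le M+1$, the classical $\lceil n/2\rceil$ bound. To actually extract $|S|\le m$ one must bring in $S+S$ (not just $m+S$), decompose $[m,M]$ at $2m$, control how $(S\cap[m,2m-1])+(S\cap[m,2m-1])$ and the mixed sumsets sit inside $[2m,M]$ versus spill past $M$, and run a sub-case analysis on the position of $M$ relative to $3m$; this is where the threshold $2n/5+1$ is genuinely used and where the extremal configurations $[k,2k-1]$ and $\{n/5+1,\dots,2n/5\}\cup\{4n/5+1,\dots,n\}$ must be separated. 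You acknowledge this yourself (``the main obstacle is the final counting step\dots the most delicate portion''), but an acknowledged obstacle is still an unproved step, and since every nontrivial conclusion of the theorem funnels through it, the proposal as written is an outline rather than a proof.
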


%

Throughout the rest of the paper, we refer to such sum-free sets as \emph{type~(a)}, \emph{type~(b)}, and \emph{type~(c)} respectively.

We use the following \emph{container theorem} of Green~\cite{G-R}, which, for large $n$, guarantees a small collection of subsets of $[n]$ which somehow  approximates the collection of sum-free sets. We should also mention that (hyper)graph containers have been used successfully in many contexts, see~\cite{container1,kw1,kw2,container2}.

\begin{theorem}[\cite{G-R}]\label{container}
	For all $\eps>0$ there exists $n_0>0$ such that, for all integers $n \geq n_0$, there exists a family $\mathcal{F}$ of subsets of $[n]$ with the following properties:
	\begin{itemize}
		\item[(i)] Every $F \in \mathcal{F}$ contains at most $\eps n^2$ Schur triples;
		\item[(ii)] If $S \subseteq [n]$ is sum-free, then $S \subseteq F$ for some $F \in \mathcal{F}$;
		\item[(iii)] $|\mathcal{F}| \leq 2^{\eps n}$;
		\item[(iv)] $|F| \leq (1/2+\eps)n$ for all $F \in \mathcal{F}$.
	\end{itemize}
\end{theorem}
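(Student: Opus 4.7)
The plan is to prove this as an application of the hypergraph container method. Set up the $3$-uniform hypergraph $H$ on vertex set $[n]$ whose edges are the Schur triples $\{x,y,z\}$ satisfying $x+y=z$. Sum-free subsets of $[n]$ are precisely the independent sets of $H$, so the task reduces to producing a small family of ``containers'' for the independent sets of $H$ such that each container spans few edges and is not too large.

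First I would verify that $H$ satisfies the co-degree conditions required to run the hypergraph container lemma of Balogh--Morris--Samotij (equivalently, of Saxton--Thomason). The hypergraph $H$ has $\Theta(n^2)$ edges, each vertex lies in $O(n)$ triples, and each pair of vertices lies in $O(1)$ triples; all three bounds are immediate from counting solutions to $x+y=z$ after fixing one or two of the variables. With these parameters, the container lemma produces a family $\mathcal{F}$ of subsets of $[n]$ with $|\mathcal{F}| \leq 2^{\eps n}$ such that every sum-free set is contained in some $F \in \mathcal{F}$ and each $F$ contains at most $\eps n^2$ Schur triples. This yields properties~(i)--(iii).

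For property~(iv), I would apply the arithmetic removal lemma of Green (and independently Kr\'al', Serra, Vena): any $F \subseteq [n]$ containing at most $\eps' n^2$ Schur triples can be made sum-free by deleting at most $\eps'' n$ elements, where $\eps'' \to 0$ as $\eps' \to 0$. Since every sum-free subset of $[n]$ has size at most $\lceil n/2 \rceil$, this forces $|F| \leq \lceil n/2\rceil + \eps'' n \leq (1/2+\eps)n$ whenever $\eps'$ is chosen small enough in terms of $\eps$.

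The main obstacle is the parameter juggling between the two lemmas: one needs the Schur-triple bound from the container lemma to be strong enough for the removal lemma to yield a deletion of size at most $\eps n/2$, while simultaneously keeping the family size bounded by $2^{\eps n}$. This is handled by applying the container lemma with parameter $\eps_1$ and the removal lemma with an input threshold $\eps_1 n^2$, choosing $\eps_1$ sufficiently small so that both the container count bound and the resulting size bound are at most those prescribed by~$\eps$. A second, technical, point is that the standard hypergraph container lemma is usually stated for hypergraphs satisfying a uniform degree condition on all levels $2,\ldots,r$; here $r=3$, and the two verifications above (maximum degree and maximum co-degree) suffice to meet this hypothesis.
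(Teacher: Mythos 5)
Your proposal is correct, but it is not the route behind the paper's citation: the paper quotes this theorem from Green~\cite{G-R}, whose original proof goes through an arithmetic (Szemer\'edi-type) regularity lemma for abelian groups --- a Fourier-analytic granularization argument --- and predates the hypergraph container method you invoke. Your derivation is the now-standard alternative: the $3$-uniform hypergraph of Schur triples on $[n]$ has $\Theta(n^2)$ edges, vertex degrees $O(n)$ and pair co-degrees $O(1)$, so the Balogh--Morris--Samotij / Saxton--Thomason container lemma (in the iterated form in which every container spans at most $\eps\, e(H)$ edges, not just the single-step dichotomy) yields (i)--(iii); and (iv) then follows from (i) via the removal lemma together with the bound $\lceil n/2\rceil$ on the size of a sum-free subset of $[n]$ --- exactly the deduction the paper itself performs when it passes from a container $F$ to a largest sum-free subset $F^*$ with $|F|-\eta n\leq |F^*|$ in the proof of Theorem~\ref{problem2}. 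One small imprecision: sum-free sets must also avoid the degenerate relations $x+x=2x$, so they are independent sets of your $3$-uniform hypergraph but not conversely; only the forward containment is needed for (ii), and the $O(n)$ degenerate Schur triples are harmlessly absorbed into the $\eps n^2$ bound in (i). On the trade-off: the container route is shorter given the modern machinery and generalizes verbatim to other groups and other linear equations, whereas Green's regularity approach is self-contained in the arithmetic setting and yields more explicit structural information about what the containers look like.
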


Given a sum-free set $S \subseteq [n]$, the set $F \in \mathcal{F}$ guaranteed by~(ii) is called a \emph{container} for $S$.
We also need the following removal lemma of  Green~\cite{G-CE,G-R}, and Kr\'al', Serra and Vena~\cite{ksv}, which guarantees that a subset of $[n]$ containing $o(n^2)$ sums can be made sum-free by removing $o(n)$ elements.

\begin{theorem}[\cite{G-CE,G-R,ksv}]\label{removal}
For all $\eps>0$, there exists $\delta,n_0>0$ such that the following holds for all integers $n \geq n_0$.
	Suppose that $A \subseteq [n]$ is a set containing at most $\delta n^2$ Schur triples.
	Then there exist $B,C \subseteq [n]$ such that $A=B \cup C$ where $B$ is sum-free and $|C| \leq \eps n$.
	\end{theorem}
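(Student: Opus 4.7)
The plan is to derive the removal lemma from the triangle removal lemma of Ruzsa and Szemer\'edi via an arithmetic-to-graph reduction, following the general strategy of Green~\cite{G-R} and of Kr\'al'--Serra--Vena~\cite{ksv}. Set $N := 3n+1$ and construct a tripartite graph $G$ with vertex classes $V_1, V_2, V_3$, each a copy of $\mathbb{Z}_N$: for $v_i \in V_i$ and $v_j \in V_j$ with $i<j$, place an edge whenever $v_j - v_i \bmod N$ lies in $A$. Since $A \subseteq [n]$ and $N > 2n$, every triangle $(v_1,v_2,v_3) \in V_1 \times V_2 \times V_3$ determines differences $a := v_2-v_1$, $b := v_3-v_2$, $c := v_3-v_1$ lying in $A$ with $a+b=c$, and conversely each Schur triple in $A$ yields exactly $N$ such triangles, parametrised by $v_1$. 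Hence $G$ contains exactly $N \cdot T_A$ triangles, where $T_A \leq \delta n^2$ denotes the number of Schur triples in $A$.

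Given $\eps > 0$, set $\eps' := \eps/20$ and invoke the triangle removal lemma with parameter $\eps'$, yielding some $\delta' > 0$; choose $\delta$ small enough (in terms of $\delta'$) that $N \cdot T_A \leq \delta' (3N)^3$. The removal lemma then supplies an edge set $E$ of $G$ with $|E| \leq \eps' N^2$ whose deletion destroys every triangle. Observe that each $a \in A$ is associated to precisely $N$ edges in each of the three bipartite pieces $V_1 \times V_2$, $V_2 \times V_3$, $V_1 \times V_3$, for $3N$ edges in total, and these associations partition the edge set of $G$. Declare $a \in A$ \emph{bad} if at least $N/6$ of its $3N$ associated edges lie in $E$, and let $C \subseteq A$ be the set of bad elements. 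Double-counting pairs $(a,e)$ with $a$ bad and $e \in E$ associated to $a$ gives $|C| \cdot N/6 \leq |E| \leq \eps' N^2$, hence $|C| \leq 6 \eps' N \leq \eps n$ for large $n$.

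It remains to verify that $B := A \setminus C$ is sum-free. Suppose for contradiction that $a+b=c$ with $a,b,c \in B$. Each of the $N$ triangles corresponding to this Schur triple uses one type-$a$ edge in $V_1 \times V_2$, one type-$b$ edge in $V_2 \times V_3$, and one type-$c$ edge in $V_1 \times V_3$; since none of $a,b,c$ is bad, each type contributes strictly fewer than $N/6$ edges to $E$. Hence strictly fewer than $3 \cdot N/6 = N/2$ of these $N$ triangles meet $E$, so at least one triangle survives in $G \setminus E$, contradicting triangle-freeness.

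The main obstacle is really the triangle removal lemma itself, whose known bounds give only a tower-type dependence of $\delta$ on $\eps$; however any quantitative form is sufficient for the qualitative statement here. A variant working directly in $\mathbb{Z}$ (taking the vertex classes to be disjoint consecutive intervals in place of copies of $\mathbb{Z}_N$) avoids the modular book-keeping and proceeds along identical lines.
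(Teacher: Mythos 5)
The paper does not prove Theorem~\ref{removal}: it is imported as a black box from Green~\cite{G-R} and Kr\'al'--Serra--Vena~\cite{ksv}, so there is no internal argument to compare yours against. Your proof is correct and is essentially the standard argument from those references: the tripartite Cayley-type graph on three copies of $\mathbb{Z}_N$ with $N=3n+1$ faithfully encodes Schur triples of $A$ as triangles (the choice $N>2n$ ensures the modular relation $a+b\equiv c$ lifts to a genuine sum in $[n]$), and the ``bad element'' double count together with the observation that the $N$ edge-disjoint triangles of a surviving solution cannot all be hit by fewer than $N/2$ deleted edges is exactly the right way to pull the Ruzsa--Szemer\'edi triangle removal lemma~\cite{triangleremoval} back to $[n]$. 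The only blemishes are at the level of constants: an unordered Schur triple with $x\neq y$ produces $2N$ rather than $N$ triangles, and the removal lemma applied to a graph on $3N$ vertices deletes at most $\eps'(3N)^2$ edges rather than $\eps'N^2$; both are absorbed by shrinking $\eps'$ and $\delta$ and do not affect the validity of the argument.
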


Finally, we will use the famous Cauchy-Davenport inequality which bounds the size of the set $A+B$:

\begin{theorem}\label{CD}
For all finite non-empty subsets $A,B$ of $\mathbb{Z}$, we have that $|A+B| \geq |A|+|B|-1$.
\end{theorem}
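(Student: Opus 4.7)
The plan is to give a direct, elementary argument that exploits the total order on $\mathbb{Z}$ — a luxury unavailable for the classical Cauchy–Davenport theorem over $\mathbb{Z}/p\mathbb{Z}$, whose standard proofs go through $e$-transforms, Chowla's lemma, or the combinatorial Nullstellensatz. First I would write the elements of $A$ and $B$ in strictly increasing order as $A = \{a_1 < a_2 < \cdots < a_m\}$ and $B = \{b_1 < b_2 < \cdots < b_k\}$, where $m = |A|$ and $k = |B|$.

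Next I would exhibit an explicit chain of $|A|+|B|-1$ distinct elements of $A+B$, namely
$$a_1 + b_1 \;<\; a_1 + b_2 \;<\; \cdots \;<\; a_1 + b_k \;<\; a_2 + b_k \;<\; \cdots \;<\; a_m + b_k.$$
Every inequality is strict: in the first $k-1$ inequalities the summand $a_1$ is fixed while $b_j$ strictly increases, and in the last $m-1$ inequalities the summand $b_k$ is fixed while $a_i$ strictly increases. Counting gives $k + (m-1) = |A|+|B|-1$ pairwise distinct members of $A+B$, which is the desired lower bound.

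There is no genuine obstacle: the well-ordering of $\mathbb{Z}$ does all the work, and the bound is tight, as witnessed by any two arithmetic progressions sharing a common difference (for which $|A+B| = |A|+|B|-1$ exactly). If one preferred a more structural route, one could instead embed $A \cup B$ into a cyclic group $\mathbb{Z}/p\mathbb{Z}$ with $p$ sufficiently large (larger than $\max A + \max B$) and invoke the classical Cauchy–Davenport inequality there, but this would be overkill for the integer statement we need.
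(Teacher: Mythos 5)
Your proof is correct: the strictly increasing chain $a_1+b_1 < \cdots < a_1+b_k < a_2+b_k < \cdots < a_m+b_k$ exhibits $|A|+|B|-1$ distinct elements of $A+B$, which is exactly the standard argument for the integer version of this bound. The paper states Theorem~\ref{CD} as a well-known fact without proof, so there is nothing to compare against; your argument fills that gap in the expected way.
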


\section{An equivalent covering problem}\label{sec:problem2}

In this section, we define a new maximisation problem whose value $g(n,r)$ is closely related to $f(n,r)$. Then, for the rest of the paper, it suffices to consider this new problem.
To motivate the problem, consider the following procedure for finding a subset $A \subseteq [n]$ with many valid colourings.
Let $r \in \mathbb{N}$ be the number of colours, as usual, and choose sum-free subsets $A_1,\ldots,A_r$ of $[n]$.
Then the number of valid colourings of $\bigcup_{i \in [r]}A_i$ is at least the number of $\sigma$ which colours $x$ with some $i$ such that $x \in A_i$.
If $x$ lies in many $A_i$ then the number of choices for $\sigma(x)$ is large.
So a choice of $A_1,\ldots,A_r$ with a large appropriately weighted overlap generates many valid colourings.
We now make this precise.

\begin{problem}\label{prob-equivalent}
\emph{Given $n,r \in \mathbb{N}$, determine $g(n,r)$}, defined as follows.
Given a tuple $(A_1,\ldots,A_r)$ of sum-free subsets of $[n]$,
for each $I \in 2^{[r]}$, let $E_I := \bigcap_{i \in I}A_i\setminus \bigcup_{j \not\in I}A_j$ be the set of $x \in [n]$ which lie in $A_i$ if and only if $i \in I$.
Define
$$
g(A_1,\ldots,A_r) := \frac{1}{n}\sum_{I \in 2^{[r]}\setminus \lbrace \emptyset\rbrace}\left|E_I\right|\log  |I|.
$$
Equivalently, for each $i \in [r]$ let $D_i := \bigcup_{I \in 2^{[r]}:|I|=i} E_I$; that is, the set of all elements that are in exactly $i$ different $A_j$'s. 
Let $d_i := |D_i|/n$ and define $(d_1,\ldots,d_r)$ to be the \emph{intersection vector} of $(A_1,\ldots,A_r)$. Let $d_0 := 1-\sum_{i \in [r]}d_i$.
Then
$$
g(A_1,\ldots,A_r) = \sum_{i \in [r]}d_i\log  i.
$$
Define
$$
g(n,r) := \max\left\lbrace g(A_1,\ldots,A_r) : A_i \subseteq [n]\text{ is sum-free for all }i \in [r]\right\rbrace.
$$
Define also $g(A,r)$ to be the maximum of $g(A_1,\ldots,A_r)$ over all tuples of sum-free subsets of $[n]$ such that $\bigcup_{i \in [r]}A_i=A$.

\end{problem}
\begin{remark}\label{maxremark}
It is not hard to see that, for every $n$ and $r$, there is always some tuple $(A_1,\ldots,A_r)$ of sum-free subsets of $[n]$ which is \emph{extremal} (that is, $g(A_1,\ldots,A_r)=g(n,r)$), and $A_i$ is a \emph{maximal} sum-free subset for all $i \in [r]$.
It will be useful to choose such an extremal tuple later, since if we know e.g.~that $A_i$ contains no even element, then by Theorem~\ref{thm-sftypes} we can assume that $A_i=\Odd$.
\end{remark}

\medskip
\noindent
The first step in the proofs of Theorems~\ref{2colours}--\ref{45colours} is to show that these problems are, in a sense, equivalent.

\begin{theorem}\label{problem2}
For all $\eps>0$ and $r \in \mathbb{N}$, there exists $n_0>0$ such that the following holds for all integers $n \geq n_0$.
Let $A \subseteq [n]$.
Then there exists $A' \subseteq A$ with $|A'| \geq |A|-\eps n$ for which
\begin{equation}\label{eq1}
2^{g(A,r)n} \leq f(A,r) \leq 2^{(g(A',r)+\eps) n}.
\end{equation}
Therefore
\begin{equation}\label{eq2}
g(n,r) \leq \frac{\log  f(n,r)}{n} \leq g(n,r) + \eps.
\end{equation}
\end{theorem}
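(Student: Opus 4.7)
The plan is to prove the two inequalities of \eqref{eq1} separately; then \eqref{eq2} follows immediately by applying them to an $A$ achieving $f(n,r)$ (for the upper bound, together with the trivial $g(A',r)\leq g(n,r)$) and to the union of an extremal tuple for $g(n,r)$ (for the lower bound).

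For the lower bound of \eqref{eq1}, I would choose sum-free subsets $A_1,\ldots,A_r$ of $[n]$ with $\bigcup_i A_i=A$ and $g(A_1,\ldots,A_r)=g(A,r)$. Every colouring $\sigma:A\to [r]$ with $\sigma(x)\in\{i:x\in A_i\}$ for all $x\in A$ is valid, since each colour class is contained in some $A_i$ and hence sum-free. The number of such colourings is exactly
\[
\prod_{x\in A}|\{i:x\in A_i\}| \;=\; \prod_{\emptyset\neq I\subseteq[r]}|I|^{|E_I|} \;=\; 2^{g(A_1,\ldots,A_r)\,n} \;=\; 2^{g(A,r)\,n}.
\]

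For the upper bound of \eqref{eq1}, I would use the container-plus-removal method. Fix $\eps_2>0$ with $r\eps_2\leq\eps/2$ and $r\eps_2\log r\leq\eps/3$, and then $\eps_1>0$ small enough that Theorem~\ref{removal} applied with error tolerance $\eps_2$ accepts any subset of $[n]$ with at most $\eps_1 n^2$ Schur triples, and also $r\eps_1\leq\eps/3$. Theorem~\ref{container} provides a family $\mathcal{F}$ with $|\mathcal{F}|\leq 2^{\eps_1 n}$, each $F\in\mathcal{F}$ carrying at most $\eps_1 n^2$ Schur triples; Theorem~\ref{removal} then yields, for every $F\in\mathcal{F}$, a sum-free subset $\hat F\subseteq F$ with $|F\setminus\hat F|\leq \eps_2 n$. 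For each valid $\sigma$, each $\sigma^{-1}(i)$ is sum-free, so lies in some $F_i\in\mathcal{F}$; hence
\[
f(A,r) \;\leq\; \sum_{(F_1,\ldots,F_r)\in\mathcal{F}^r}\prod_{x\in A}|\{i:x\in F_i\}| \;\leq\; 2^{r\eps_1 n}\cdot\max_{(F_1,\ldots,F_r)}\prod_{x\in A}|\{i:x\in F_i\}|.
\]
Let $(F_1,\ldots,F_r)$ attain this maximum and set $A':=A\setminus\bigcup_i(F_i\setminus\hat F_i)$, so $|A\setminus A'|\leq r\eps_2 n\leq\eps n$. For $x\in A'$ we have $x\in F_i\iff x\in\hat F_i$, so $\tilde F_i:=\hat F_i\cap A'$ are sum-free with $\bigcup_i\tilde F_i=A'$ (any $x\in A'$ lies in some $F_i$ because the count is zero otherwise, hence in $\hat F_i$). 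This yields
\[
\prod_{x\in A'}|\{i:x\in F_i\}| \;=\; 2^{g(\tilde F_1,\ldots,\tilde F_r)\,n} \;\leq\; 2^{g(A',r)\,n},
\]
while the factor from $A\setminus A'$ is at most $r^{r\eps_2 n}$. Multiplying these three contributions and substituting the parameter choices gives $f(A,r)\leq 2^{(g(A',r)+\eps)n}$.

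The main obstacle, and the reason for the indirect argument, is that the subset $A'$ in \eqref{eq1} must depend only on $A$, not on the individual colouring $\sigma$ (different $\sigma$'s pick different containers). The step above circumvents this by choosing $A'$ from the single tuple of containers that dominates the sum, and by using the smallness $|\mathcal{F}|^r\leq 2^{r\eps_1 n}$ of the container family to absorb the enumeration cost into the $\eps n$ slack in the exponent; all remaining steps are routine parameter juggling.
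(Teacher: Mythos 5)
Your proposal is correct and follows essentially the same route as the paper: the identical lower-bound count over an extremal tuple, and an upper bound via Green's container theorem combined with the removal lemma, associating each valid colouring to a tuple of containers, passing to sum-free approximations of the containers, and absorbing both the $|\mathcal{F}|^r$ enumeration cost and the removed elements into the $\eps$ slack. Your choice to define $A'$ from the single dominating tuple and to bound $|A\setminus A'|$ purely in terms of $\sum_i |F_i\setminus \hat F_i|$ (rather than via the colour classes) is a minor, if slightly cleaner, variation of the paper's argument.
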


\begin{proof}
We first prove the lower bound (for all $n$ and $r$).
Fix integers $n,r$ and let $A \subseteq [n]$.
Choose a tuple $(A_1,\ldots,A_r)$ of sum-free subsets of $[n]$ whose intersection vector $(d_1,\ldots,d_r)$ is extremal, i.e.~satisfies
\begin{equation}\label{extd}
\sum_{i \in [r]}d_i\log  i = g(A_1,\ldots,A_r) = g(A,r).
\end{equation}

For each $I \in 2^{[r]}\setminus \lbrace \emptyset\rbrace$, define $E_I$ as in the statement of Problem~\ref{prob-equivalent}.
Consider any colouring $\sigma : A \rightarrow [r]$ such that, for each $I \in 2^{[r]}\setminus \lbrace \emptyset\rbrace$ and $x \in E_I$, we have $\sigma(x) \in I$.
Then $\sigma^{-1}(i) \subseteq A_i$ for all $i \in [r]$, so the fact that $A_i$ is sum-free for all $i \in [r]$ implies that $\sigma$ is valid.
Thus the number of such $\sigma$ is a lower bound for the total number of valid colourings, and so
$$
f(A,r) \geq \prod_{I \in 2^{[r]}\setminus \lbrace \emptyset \rbrace}|I|^{|E_I|} = \prod_{i \in [r]}i^{d_in}\stackrel{(\ref{extd})}{=} 2^{g(A,r)n},
$$
as required.

For the remainder of the proof we focus on the upper bound.
		Fix an integer $r$ and let $\eps > 0$.
		We may assume that $\eps \ll 1/r$.
Choose $\eta$ such that $0 < \eta \ll \eps$. 
Apply Theorem~\ref{removal} to obtain $\gamma,n_0 > 0$ such that, for all integers $n \geq n_0$, every $A \subseteq [n]$ which contains at most $\gamma n^2$ Schur triples may be made sum-free by removing at most $\eta n$ elements.
Without loss of generality we may assume that $\gamma \ll \eta$.
Theorem~\ref{container} implies that, by increasing $n_0$ if necessary, for all integers $n \geq n_0$, there exists a family $\mathcal{F} = \mathcal{F}_n$ of containers such that
\begin{itemize}
\item[(i)] every $F \in \mathcal{F}$ contains at most $\gamma n^2$ Schur triples;
\item[(ii)] every sum-free subset of $[n]$ lies in at least one $F \in \mathcal{F}$;
\item[(iii)] $|\mathcal{F}| \leq 2^{\gamma n}$; and
\item[(iv)] $|F| \leq (1/2+\gamma)n$ for all $F \in \mathcal{F}$.
\end{itemize}		
		Without loss of generality, we may assume that $1/n_0 \ll \gamma$ and $\eps \ll 1$.
		We have the hierarchy
		$$
		1/n_0 \ll \gamma \ll \eta \ll \eps \ll 1.
		$$
Given any $n \geq n_0$ and the family $\mathcal{F}$ of containers, for each $F \in \mathcal{F}$, fix a largest sum-free subset $F^*$ of $F$.
Then (i) together with Theorem~\ref{removal} implies that $|F|-\eta n \leq |F^*| \leq |F|$.
		
		Now let $n \geq n_0$ be an integer, and $A \subseteq [n]$ be arbitrary.
		Consider any fixed valid $r$-colouring $\sigma$ of $A$.
		Then $\sigma^{-1}(i)$ is sum-free for all $i \in [r]$.
By (ii), we may choose a tuple $(F_1,\ldots,F_r) \in \mathcal{F}^r$ of containers such that $\sigma^{-1}(i) \subseteq F_i$.
By (i), $F_i$ contains at most $\gamma n^2$ Schur triples for all $i \in [r]$.
For each $i \in [r]$, let us write $F_i^* := (F_i)^*$ for the largest sum-free subset of $F_i$ we fixed earlier.
Then $|F_i^*| \geq |F_i|-\eta n$.
Thus, for each valid colouring $\sigma : A \rightarrow [r]$, we obtain a tuple $(F_1^*,\ldots,F_r^*)$.
Observe that 
\begin{align}\label{eq-removed}
|\sigma^{-1}(i)\setminus F_i^*| \leq \eta n,
\end{align}
 but $F_i^*$ may contain many elements which do not lie in $\sigma^{-1}(i)$.

We now claim that the following procedure generates every valid colouring $\sigma$ of $A$, and therefore the number of choices in this procedure is an upper bound on $f(A,r)$. Each choice will generate a colouring $\tau$:
		\begin{enumerate}
			\item For all $i \in [r]$, choose a container $G_i \in \mathcal{F}$, and let $G_i^*$ be the largest sum-free subset of $G_i$ we fixed earlier.
			\item For each $I \in 2^{[r]}\setminus \lbrace \emptyset\rbrace$, let
			$$
			E'_I := \left(\bigcap_{i \in I}G_i^*\setminus \bigcup_{j\notin I}G_j^* \right) \cap A.
			$$
			Let also $D'_i := \bigcup_{I \subseteq [r]:|I|=i}E'_I$ for each $i \in [r]$.
			So $D'_i$ is the set of those elements in $A$ which lie in exactly $i$ of the $G_j^*$. Let $d'_i := |D'_i|/n$ for all $i \in [r]$.
			\item For each $I \in 2^{[r]}\setminus \lbrace \emptyset\rbrace$ and $x \in E'_I$, choose $i \in I$ and set $\tau(x):=i$.
			\item For each uncoloured $y \in A$, let $\tau(y) \in [r]$ be arbitrary.
		\end{enumerate}
We need to show that there is a choice in (1)--(4) which will yield $\tau=\sigma$.
In (1), for each $i \in [r]$, (ii) and the fact that $\sigma$ is valid implies that we can choose $G_i := F_i \in \mathcal{F}$ such that $\sigma^{-1}(i) \subseteq F_i$.
Note that $G_i^*=F_i^*$ for all $i \in [r]$.
The choice in (2) is fixed by our choices in (1).
In (3), by construction,
for every $x \in (\bigcup_{i \in [r]}F_i^*)\cap A$, we have that $x \in E'_I$ for some $I \ni \sigma(x)$.
Thus for every $x \in (\bigcup_{i \in [r]}F_i^*)\cap A$ we can choose $\tau(x) := \sigma(x)$.
In (4) we are free to colour the uncoloured elements of $A$ with $\sigma$.
Since $\sigma$ was an arbitrary valid colouring of $A$, we have proved the claim.

Thus it remains to count the number of colourings generated by (1)--(4).
Given a tuple $(G_1,\ldots,G_r) \in \mathcal{F}^r$ of containers, let $\mathcal{C}$ be the set of colourings $\tau : A \rightarrow [r]$  generated by it, i.e.~the set of $\tau$ which arise from the procedure after fixing the choice $(G_1,\ldots,G_r)$ in~(1). Observe that $(G_1,\ldots,G_r)$ gives rise to a unique tuple $(G_1^*,\ldots,G_r^*)$. Then, since the only choices are in (3) and (4), we have
$$
|\mathcal{C}| \leq \prod_{j \in [r]}j^{|D'_j|} \cdot r^{\left|A\setminus \bigcup_{i \in [r]}G_i^*\right|} \stackrel{\eqref{eq-removed}}{\leq} \prod_{j \in [r]}j^{d'_jn} \cdot r^{r\eta n}.
$$
Taking logarithms, we have that
\begin{equation}\label{logeq}
\frac{\log  |\mathcal{C}|}{n} \leq \sum_{j \in [r]}d'_j \log  j + \sqrt{\eta} = g(G_1^*\cap A,\ldots,G_r^* \cap A) + \sqrt{\eta} \leq g(A',r) + \sqrt{\eta},
\end{equation}
where $A' := A \cap \bigcup_{i \in [r]}G_i^*$.
So $|A'| \geq |A|-r\eta n \geq |A|-\eps n$.
But, by (iii), the number of choices of $(G_1,\ldots,G_r) \in \mathcal{F}^r$ is at most $|\mathcal{F}|^r \leq 2^{r\gamma n}$, so
$$
f(A,r) \leq 2^{r\gamma n} \cdot 2^{g(A',r)n} \cdot 2^{\sqrt{\eta}n} \leq 2^{(g(A',r)+2\sqrt{\eta})n} \leq 2^{(g(A',r)+\eps)n},
$$
completing the proof of the upper bound.
The second assertion is an obvious consequence of the first.
\end{proof}

The discussion about lower bounds after the statement of Theorem~\ref{45colours} amounts to the following inequalities:
For all integers $n \geq r \geq 2$, we have
\begin{equation}\label{gbound1}
f(n,r) \geq g(n,r) \geq g(\underbrace{\Odd,\ldots,\Odd}_{r}) = g(\underbrace{I_2,\ldots,I_2}_{r}) = \frac{\lceil n/2\rceil}{n} \cdot \log  r; \quad\text{and}
\end{equation}
\begin{equation}\label{gbound2}
f(n,r) \geq g(n,r) \geq g(\underbrace{I_2,\ldots,I_2}_{\lfloor r/2\rfloor},\underbrace{\Odd,\ldots,\Odd}_{\lceil r/2\rceil}) \geq \frac{\lfloor n/4\rfloor}{n} \log \left( r\left\lfloor \frac{r^2}{4}\right\rfloor \right).
\end{equation}
Figure~\ref{solsfig2} shows these three constructions in the case when $r=4$, when they each give rise to roughly the same lower bound. Theorem~\ref{45colours}) implies that each one is in fact an approximate optimal solution of Problem~1.

Observe the following easy correspondence between feasible solutions of Problems~1 and~2.
Given a feasible solution $A \subseteq [n]$ of Problem~1 and a valid $r$-colouring $\sigma$ of $A$, we have that $(\sigma^{-1}(1),\ldots,\sigma^{-1}(r))$ is a feasible solution of Problem~2.
Given a feasible solution $(A_1,\ldots,A_r) \subseteq [n]^r$ of Problem~\ref{prob-equivalent}, we have that $A_1 \cup \ldots \cup A_r$ is a feasible solution of Problem~\ref{prob-main}.

Theorem~\ref{problem2} is essentially an analogue of the main result of~\cite{psy}.
Informally speaking, determining $g(n,r)$ involves layering $r$ sum-free subsets of $[n]$ so that an appropriately weighted overlap is as large as possible, whereas determining $h(r,k)$ involves layering $r$ finite $K_k$-free graphs so that their weighted overlap is as large as possible.
Importantly and unfortunately, $g(n,r)$ does of course depend on $n$.
However, the cases in which $F(n,r,k)$ has been determined (when $r$ is small) give us some valuable intuition for determining $g(n,r)$ (and hence approximately determining $f(n,r)$):
namely that for an extremal tuple $(A_1,\ldots,A_r)$ of sum-free sets, each $A_i$ should perhaps be a \emph{largest} sum-free set: either $\Odd$ or $I_2$.
Unlike in the case of graphs, our ground set $[n]$ comes with a fixed labelling.
So there is only one way to layer, say, $\Odd$ and $I_2$, whereas there are many ways to layer any two $r$-vertex graphs $G$ and $H$.

\begin{center}

\begin{figure}[]
\scalebox{0.7}{
\begin{tikzpicture}[scale=1]
\begin{scope}
\draw[line width = 0.5mm] (0,0)--(6,0);
\draw[line width = 0.5mm] (0,-0.1)--(0,0.1);
\draw[line width = 0.5mm] (3.01,-0.1)--(3.01,0.1);
\draw[line width = 0.5mm] (5.99,-0.1)--(5.99,0.1);

\draw[line width = 2mm, color=Magenta] (3,0.3)--(6,0.3);
\draw[line width = 2mm, color=Green] (3,0.6)--(6,0.6);

\foreach \x in {0,0.1,...,6.1} \draw[line width = 0.5mm, color=Blue] (\x,0.8)--(\x,1.0);
\foreach \x in {0,0.1,...,6.1} \draw[line width = 0.5mm, color=Red] (\x,1.1)--(\x,1.3);

\node at (0,-0.8) [draw=none,
label={\large $1$}] (){};
\node at (3,-1) [draw=none,
label={\large $\left\lfloor\frac{n}{2}\right\rfloor$}] (){};
\node at (6,-0.8) [draw=none,
label={\large $n$}] (){};
\end{scope}


\begin{scope}[xshift=-8cm]
\draw[line width = 0.5mm] (0,0)--(6,0);
\draw[line width = 0.5mm] (0,-0.1)--(0,0.1);
\draw[line width = 0.5mm] (3.01,-0.1)--(3.01,0.1);
\draw[line width = 0.5mm] (5.99,-0.1)--(5.99,0.1);

\draw[line width = 2mm, color=Magenta] (3,0.3)--(6,0.3);
\draw[line width = 2mm, color=Green] (3,0.6)--(6,0.6);
\draw[line width = 2mm, color=Blue] (3,0.9)--(6,0.9);
\draw[line width = 2mm, color=Red] (3,1.2)--(6,1.2);

\node at (0,-0.8) [draw=none,
label={\large $1$}] (){};
\node at (3,-1) [draw=none,
label={\large $\left\lfloor\frac{n}{2}\right\rfloor$}] (){};
\node at (6,-0.8) [draw=none,
label={\large $n$}] (){};
\end{scope}


\begin{scope}[xshift=-16cm]
\draw[line width = 0.5mm] (0,0)--(6,0);
\draw[line width = 0.5mm] (0,-0.1)--(0,0.1);
\draw[line width = 0.5mm] (3.01,-0.1)--(3.01,0.1);
\draw[line width = 0.5mm] (5.99,-0.1)--(5.99,0.1);

\foreach \x in {0,0.1,...,6.1} \draw[line width = 0.5mm, color=Magenta] (\x,0.2)--(\x,0.4);
\foreach \x in {0,0.1,...,6.1} \draw[line width = 0.5mm, color=Green] (\x,0.5)--(\x,0.7);
\foreach \x in {0,0.1,...,6.1} \draw[line width = 0.5mm, color=Blue] (\x,0.8)--(\x,1.0);
\foreach \x in {0,0.1,...,6.1} \draw[line width = 0.5mm, color=Red] (\x,1.1)--(\x,1.3);

\node at (0,-0.8) [draw=none,
label={\large $1$}] (){};
\node at (3,-1) [draw=none,
label={\large $\left\lfloor\frac{n}{2}\right\rfloor$}] (){};
\node at (6,-0.8) [draw=none,
label={\large $n$}] (){};
\end{scope}
\end{tikzpicture}
\caption{Three approximate solutions to Problem~2 for $r=4$.}
\label{solsfig2}
}
\end{figure}
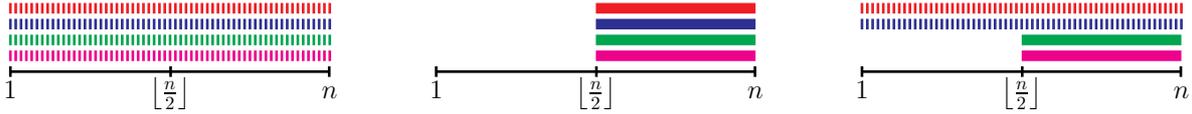
\end{center}

\section{The proof of Theorem~\ref{3colours}}\label{sec:3}

Given Theorem~\ref{problem2}, it is now a fairly simple task to obtain stability in the case when $r=2,3$.
Indeed, it suffices to prove stability for Problem 2.

\begin{lemma}\label{problem2stab23}
Let $r \in \lbrace 2,3\rbrace$.
For all $\eps > 0$, there exists $n_0>0$ such that the following holds for all integers $n \geq n_0$.
Let $A \subseteq [n]$ be such that $g(A,r) \geq g(n,r) - \eps/(50r)$.
Then either
$|A\bigtriangleup \Odd| \leq \eps n$ or $|A\bigtriangleup I_2| \leq \eps n$.
\end{lemma}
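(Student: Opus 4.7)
The plan is to convert the near-optimality hypothesis into a small linear program in the variables $d_1,\dots,d_r$, use its solution to force $d_r\approx 1/2$ and $d_j\approx 0$ for every $j<r$, and then apply the strong stability theorem (Theorem~\ref{thm-sftypes}) to pin down the structure of each $A_i$. Concretely, fix a tuple $(A_1,\dots,A_r)$ realising $g(A,r)$, with each $A_i\subseteq A$ sum-free, $\bigcup_i A_i=A$, and intersection vector $(d_1,\dots,d_r)$. Since every sum-free subset of $[n]$ has size at most $\lceil n/2\rceil$, double counting yields
\[
\sum_{i\in[r]} i\, d_i \;=\; \frac{1}{n}\sum_{j\in[r]} |A_j| \;\leq\; \frac{r}{2} + o(1),
\]
while the lower bound~(\ref{gbound1}) on $g(n,r)$, combined with the hypothesis $g(A,r)\geq g(n,r)-\eps/(50r)$, gives $\sum_i d_i\log i \geq (\log r)/2 - \eps/(40r)$ for large $n$. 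Hence $(d_1,\dots,d_r)$ is a near-optimal feasible point of the LP of maximising $\sum_i x_i\log i$ subject to $\sum_i i x_i\leq r/2$ and $x_i\geq 0$.

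For $r\in\{2,3\}$ a direct vertex enumeration shows that the unique optimum of this LP is $(x_1,\dots,x_r)=(0,\dots,0,1/2)$ with value $(\log r)/2$, and each coordinate $x_j$ with $j<r$ has a strictly positive reduced cost. Explicitly, for $r=2$ the objective is $d_2$, so $d_2\geq 1/2-O(\eps)$, and the constraint $d_1+2d_2\leq 1+o(1)$ then forces $d_1=O(\eps)$. For $r=3$, multiplying the constraint by $(\log 3)/3$ and subtracting the objective lower bound yields
\[
\tfrac{\log 3}{3}\,d_1 \;+\; \Bigl(\tfrac{2\log 3}{3} - 1\Bigr) d_2 \;\leq\; O(\eps),
\]
and since both coefficients are positive constants, $d_1,d_2=O(\eps)$ and consequently $d_3\geq 1/2-O(\eps)$. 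In either case, $D_r=A_1\cap\cdots\cap A_r$, so $|A_1\cap\cdots\cap A_r|\geq (1/2-O(\eps))n$, and in particular $|A_i|\geq (1/2-O(\eps))n$ for every $i\in[r]$.

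For $\eps$ small and $n$ large, the density bound $|A_i|>2n/5+1$ rules out type~(a) in Theorem~\ref{thm-sftypes}, so each $A_i$ is of type~(b) or~(c). In the type~(b) case, $A_i\subseteq\Odd$ together with $|A_i|\geq(1/2-O(\eps))n$ forces $|A_i\bigtriangleup\Odd|=O(\eps n)$; in the type~(c) case, $\min(A_i)\geq|A_i|\geq(1/2-O(\eps))n$ forces $A_i\subseteq[(1/2-O(\eps))n,n]$ and hence $|A_i\bigtriangleup I_2|=O(\eps n)$. A mixture is ruled out by a volume argument: if $A_1$ were of type~(b) and $A_2$ of type~(c), then
\[
A_1\cap A_2\;\subseteq\;\Odd\cap\bigl[(1/2-O(\eps))n,n\bigr]
\]
would have size at most $n/4+O(\eps n)$, contradicting $|A_1\cap A_2|\geq |D_r|\geq(1/2-O(\eps))n$ for small $\eps$. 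Hence all $A_i$ share a common type, and $A=\bigcup_i A_i$ inherits the $O(\eps n)$-closeness to $\Odd$ or to $I_2$ accordingly. Choosing the hidden constants sufficiently small yields $|A\bigtriangleup\Odd|\leq\eps n$ or $|A\bigtriangleup I_2|\leq\eps n$. The main delicate step is the quantitative LP analysis; once $d_j=O(\eps)$ for $j<r$ is in hand, everything downstream is routine structural bookkeeping.
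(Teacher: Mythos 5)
Your proposal is correct and follows essentially the same route as the paper: both reduce the near-optimality hypothesis to the linear program $\max \sum_i d_i\log i$ subject to $\sum_i i\,d_i \leq r/2 + o(1)$, deduce $d_r \geq 1/2 - O(\eps)$, and then invoke Theorem~\ref{thm-sftypes} on the resulting large sum-free intersection. The only cosmetic difference is that the paper applies the structure theorem once to $\bigcap_i A_i$ and bounds $|A \bigtriangleup \bigcap_i A_i|$ directly, whereas you classify each $A_i$ separately and rule out mixed types via the size of the intersection; both are valid.
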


\begin{proof}
Let $\eps > 0$ and assume without loss of generality that $\eps < 1/100$.
Let also $n$ be a sufficiently large integer compared to $\eps$.
Choose a tuple $(A_1,\ldots,A_r)$ of sum-free subsets of $[n]$ such that $g(A_1,\ldots,A_r)=g(A,r)$.
Let $(d_1,\ldots,d_r)$ be its intersection vector.
Recall from~(\ref{gbound1}) that $g(n,r) \geq (1/2)\cdot \log  r$.
So $g(A,r) \geq (1/2)\cdot \log  r-\eps/(50r)$.
We will need the following claim.

\begin{claim}
It suffices to show that $d_r \geq 1/2 - \eps/(3r)$.
\end{claim}

\begin{claimproof}
Let $A' := A_1 \cap \ldots \cap A_r$.
Assume that $|A'|=d_r n \geq (1/2-\eps/(3r))n$.
Clearly $A'$ is a sum-free subset of $[n]$.
Suppose that $A'$ contains at least one even element.
Then Theorem~\ref{thm-sftypes} implies that $A'$ is a sum-free set of type (c), so $\min(A') \geq |A'| \geq (1/2-\eps/(3r))n$, and so $|A' \bigtriangleup I_2| \leq 2\eps n/(3r)$.
Otherwise (if every element of $A'$ is odd) we have $|A' \bigtriangleup \Odd| \leq \eps n/(3r)$.
Finally, since every sum-free subset of $[n]$ has size at most $\lceil n/2\rceil$, we have
$$
|A \bigtriangleup A'| \leq \sum_{i \in [r]}|A_i \bigtriangleup A'| \leq r(\lceil n/2\rceil - d_rn) \leq \eps n/3 + r.
$$
So, by the triangle inequality, either $|A \bigtriangleup I_2| \leq\eps n$ or $|A \bigtriangleup \Odd| \leq \eps n$.
\end{claimproof}

\medskip
\noindent
First consider the case when $r=2$.
Then
$$
\frac{1}{2}-\frac{\eps}{50r} \leq g(A,2) = g(A_1,A_2) = d_2,
$$
as required.
Now let $r=3$.
Then
\begin{align}\label{eq-2,3c-d2d3}
\frac{1}{2} \cdot \log  3 - \frac{\eps}{50r} \leq g(A,3) = g(A_1,A_2) = d_2 + d_3\cdot\log  3.
\end{align}
Recall that each $A_i$ is sum-free with size at most $\lceil n/2\rceil$, so
$$
d_1+2d_2+3d_3 = \frac{1}{n}\left(|A_1|+|A_2|+|A_3|\right) \leq \frac{3}{n}\left\lceil \frac{n}{2}\right\rceil \leq \frac{3}{2} + \frac{\eps}{100r},
$$
which implies that 
$
d_2+ 3d_3/2 \leq 3/4 + \eps/(200r)
$.
Therefore, if $d_3 < 1/2-\eps/(3r)$, we have
\begin{align*}
d_2+d_3 \cdot \log  3 &= d_2+\frac{3}{2}d_3+\left(\log  3-\frac{3}{2}\right)\cdot d_3  <\frac{3}{4} + \frac{\ep}{200r}+\left(\log  3-\frac{3}{2}\right)\cdot\left(\frac{1}{2}-\frac{\ep}{3r}\right) \\
&= \frac{1}{2}\log  3-\left(\frac{\log  3}{3}-\frac{1}{200}-\frac{1}{2}\right)\cdot\frac{\ep}{r}<\frac{1}{2}\log  3-\frac{\ep}{50r},
\end{align*}
a contradiction to~\eqref{eq-2,3c-d2d3}.
\end{proof}

\begin{corollary}\label{23stability}
	Let $r \in \lbrace 2,3\rbrace$. For all $\eps>0$, there exists $n_0>0$ such that the following holds.
	Let $n > n_0$ be an integer and let $A \subseteq [n]$ be such that $f(A,r) \geq f(n,r)\cdot 2^{-\eps n/(200r)}$.
	Then either $|A\bigtriangleup \Odd| \leq \eps n$ or $|A\bigtriangleup I_2| \leq \eps n$.
\end{corollary}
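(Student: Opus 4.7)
The plan is to deduce the corollary by combining Theorem~\ref{problem2}, which passes between $f$ and $g$, with the Problem~2 stability statement of Lemma~\ref{problem2stab23}. All the work is in the setup, and there is no substantive new mathematical difficulty.

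First I would fix $r \in \lbrace 2,3\rbrace$ and $\eps > 0$, and pick an auxiliary parameter $\eps'$ with $0 < \eps' \leq \eps/(200r)$ (so in particular $\eps' \leq \eps/2$). Apply Theorem~\ref{problem2} with this $\eps'$ to obtain an $n_0$ such that for every $n \geq n_0$ and every $A \subseteq [n]$, there is $A' \subseteq A$ with $|A'| \geq |A|-\eps'n$ satisfying $f(A,r) \leq 2^{(g(A',r)+\eps')n}$, and moreover $\log f(n,r)/n \geq g(n,r)$. Enlarging $n_0$ if necessary, I may assume it is also large enough for Lemma~\ref{problem2stab23} to apply with parameter $\eps/2$.

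Now suppose $A \subseteq [n]$ satisfies $f(A,r) \geq f(n,r)\cdot 2^{-\eps n/(200r)}$. Taking logarithms and dividing by $n$,
\begin{equation*}
g(A',r) + \eps' \;\geq\; \frac{\log f(A,r)}{n} \;\geq\; \frac{\log f(n,r)}{n} - \frac{\eps}{200r} \;\geq\; g(n,r) - \frac{\eps}{200r}.
\end{equation*}
By the choice $\eps' \leq \eps/(200r)$, this gives
\begin{equation*}
g(A',r) \;\geq\; g(n,r) - \frac{\eps}{100r} \;=\; g(n,r) - \frac{\eps/2}{50r},
\end{equation*}
exactly the hypothesis of Lemma~\ref{problem2stab23} applied to $A'$ with parameter $\eps/2$. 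That lemma therefore yields either $|A' \bigtriangleup \Odd| \leq \eps n/2$ or $|A' \bigtriangleup I_2| \leq \eps n/2$.

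Finally, since $|A \setminus A'| \leq \eps' n \leq \eps n/2$, the triangle inequality
\begin{equation*}
|A \bigtriangleup X| \;\leq\; |A \bigtriangleup A'| + |A' \bigtriangleup X| \;\leq\; \eps n/2 + \eps n/2 \;=\; \eps n
\end{equation*}
with $X \in \lbrace \Odd, I_2\rbrace$ transfers the conclusion from $A'$ to $A$, completing the proof. The only (mild) subtlety is keeping the constants straight: one must choose $\eps'$ small enough to absorb both the loss $\eps/(200r)$ from the hypothesis and the loss $\eps'$ from Theorem~\ref{problem2} into the $\eps/(100r) = (\eps/2)/(50r)$ slack required by Lemma~\ref{problem2stab23}, and small enough that $A'$ differs from $A$ by at most $\eps n/2$ elements.
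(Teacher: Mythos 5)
Your proposal is correct and follows essentially the same route as the paper: apply Theorem~\ref{problem2} with a small parameter to pass from $f$ to $g$ on a large subset $A'$, invoke Lemma~\ref{problem2stab23} with parameter $\eps/2$, and transfer the conclusion back to $A$ via the triangle inequality. The constant bookkeeping matches the paper's, so there is nothing further to add.
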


\begin{proof}
Choose $n_0>0$ sufficiently large so that the conclusion of Theorem~\ref{problem2} holds with parameter $\eps/(200r)$ and the conclusion of Lemma~\ref{problem2stab23} holds with parameter $\eps/2$.
Let $n \geq n_0$ be an integer and let $A \subseteq [n]$ be such that $f(A,r) \geq f(n,r) \cdot 2^{-\eps n/(200r)}$.
From Theorem~\ref{problem2} obtain $A' \subseteq A$ with $|A'| \geq |A|-\eps n/2$.
Then
$$
g(A',r) \stackrel{(\ref{eq1})}{\geq} \frac{\log  f(A,r)}{n}-\frac{\eps}{200r} \geq \frac{\log  f(n,r)}{n}-\frac{\eps}{100r} \stackrel{(\ref{eq2})}{\geq} g(n,r)-\frac{\eps}{100r}.
$$
Lemma~\ref{problem2stab23} implies that either $|A' \bigtriangleup \Odd| \leq \eps n/2$ or $|A' \bigtriangleup I_2| \leq \eps n/2$. The result follows.
\end{proof}

Note that, in both cases $r \in \lbrace 2,3\rbrace$, the proof proceeds by solving a linear program in variables $d_1,\ldots,d_r$.
The very same linear program (approximately) yields $F(n,r,3)$ for $r \in \lbrace 2,3\rbrace$ in the proof of the main result in~\cite{abks}, where the variables $d_i$ correspond to densities of overlapping triangle-free graphs.



\section{The proof of Theorem~\ref{2colours}}\label{sec:2}

In this section we use Corollary~\ref{23stability} to prove Theorem~\ref{2colours}.
Before starting the proof, we need the following useful notion.

\begin{definition}
Given sets $S, B\subseteq [n]$, define the \emph{link graph of $B$ generated by $S$}, denoted $L_S[B]$, as follows.
We set $V(L_S[B]):=B$, and given $x,y \in B$, we have $xy \in E(L_S[B])$ if and only if there is some $z \in S$ such that $\lbrace x,y,z\rbrace$ is a Schur triple, and $x,y,z$ are distinct.
Note that $L_S[B]$ is a simple graph (i.e.~it does not contain loops).
If $S = \lbrace v \rbrace$, then we use the shorthand $L_v[B] := L_{\lbrace v \rbrace}[B]$.
\end{definition}

This notion is useful since each edge in a link graph represents some restriction of valid colourings, and so a large set of independent edges in a link graph limits the number of valid colourings.

\begin{lemma}\label{matching}
Let $n,r \in \mathbb{N}$ and let $A \subseteq [n]$.
Let $x \in A$ and suppose that the link graph $L_x[A\setminus \lbrace x \rbrace]$ contains a matching $M_x$ of size $m$. Then
$$
f(A,r) \leq (r^2-1)^m \cdot r^{|A|-2m}.
$$
\end{lemma}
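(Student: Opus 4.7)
The plan is to upper-bound $f(A,r)$ by a simple multiplicative counting argument over the colour of $x$, the colours on the $2m$ matched vertices, and the colours on the remaining $|A|-1-2m$ elements of $A$.

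First I would fix the element $x \in A$ and the matching $M_x = \lbrace y_1z_1,\ldots,y_mz_m\rbrace$ in $L_x[A\setminus\lbrace x \rbrace]$. By the definition of the link graph, for every $i \in [m]$ there exists some element $w_i \in \lbrace x \rbrace$ (here $w_i = x$) with $\lbrace x,y_i,z_i\rbrace$ forming a Schur triple of three distinct integers. Since $M_x$ is a matching, the pairs $\lbrace y_i,z_i\rbrace$ are pairwise disjoint, and none of them contains $x$; in particular, the sets $\lbrace x,y_i,z_i\rbrace$ share only the element $x$.

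Next, I would count valid $r$-colourings by choosing the colour of $x$ first ($r$ options), then independently choosing the colour pair on each matched edge, and finally colouring the remaining elements arbitrarily. Once $\sigma(x) = c$ is fixed, sum-freeness of each colour class forces that for every $i \in [m]$ we cannot have $\sigma(y_i) = \sigma(z_i) = c$, since then $\lbrace x,y_i,z_i\rbrace$ would be a monochromatic Schur triple. Thus the ordered pair $(\sigma(y_i),\sigma(z_i))$ lies in $[r]^2 \setminus \lbrace (c,c)\rbrace$, giving at most $r^2-1$ choices per edge; because the matched pairs are disjoint, these choices multiply across the $m$ edges. The remaining $|A|-1-2m$ elements of $A$ can be given any of the $r$ colours. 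Multiplying yields
\[
f(A,r) \;\leq\; r \cdot (r^2-1)^m \cdot r^{|A|-1-2m} \;=\; (r^2-1)^m \cdot r^{|A|-2m},
\]
as required. (This is an upper bound on \emph{all} colourings that avoid a monochromatic triple on $\lbrace x,y_i,z_i\rbrace$ for $i \in [m]$; in particular, every valid colouring is counted.)

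There is essentially no main obstacle here: the only point to be careful about is that the link graph $L_x[A\setminus \lbrace x \rbrace]$ is defined so that the three elements of each encoded Schur triple are distinct, which is exactly what ensures that each edge yields a genuine restriction (i.e.\ that $y_i, z_i \neq x$ and $y_i \neq z_i$, so the forbidden configuration really is $r^2-1$ pairs out of $r^2$). Since $M_x$ is a matching, the restrictions imposed by different edges involve disjoint variables and therefore combine multiplicatively, which is what makes the bound clean.
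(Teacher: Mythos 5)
Your proposal is correct and is essentially identical to the paper's argument: both fix $\sigma(x)$ ($r$ choices), observe that each matched edge $y_iz_i$ forbids the single pair $(\sigma(x),\sigma(x))$ because $\lbrace x,y_i,z_i\rbrace$ is a Schur triple of distinct elements, and multiply $r\cdot(r^2-1)^m\cdot r^{|A|-1-2m}$ using disjointness of the matching edges. No issues.
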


\begin{proof}
	We will bound $f(A,r)$ by re-constructing valid colourings using the following procedure, in which every choice yields a colouring $\tau : A \rightarrow [r]$ (which may or may not be valid).
	
	\begin{itemize}
	\item[(1)] Let $\tau(x) \in [r]$ be arbitrary.
	\item[(2)] For each edge $uv$ in $M_x$, choose $(\tau(u),\tau(v)) \in [r]^2 \setminus \lbrace (\tau(x),\tau(x))\rbrace$.
	\item[(3)] For each uncoloured $y \in A$, let $\tau(y) \in [r]$ be arbitrary.
	\end{itemize}
	
To see that the procedure generates every valid $r$-colouring of $A$, we just need to check that every valid $r$-colouring $\sigma$ has the property in Step~2 (since all other choices were arbitrary). That is, for all $uv \in M_x$, $\sigma$ does not assign $u,v,x$ the same colour.
But this is clear since $\lbrace u,v,x\rbrace$ is a Schur triple. 

Therefore the number of colourings generated by the procedure is an upper bound for $f(A,r)$.
Thus, using the fact that $|A\setminus (\{x\} \cup V(M_x))| = |A|-2m-1$, we have
$$
f(A,r) \leq r \cdot (r^2-1)^m \cdot r^{|A|-2m-1},
$$ 
as required.
\end{proof}

We are now ready to prove Theorem~\ref{2colours}.

\medskip
\noindent
\emph{Proof of Theorem~\ref{2colours}.}
First recall that $2^{\lceil n/2\rceil}$ is a lower bound for $f(n,2)$ for all positive integers $n$, since we can always find a sum-free subset of $[n]$ of size $\lceil n/2\rceil$, namely $\Odd$ or $I_2$, and colour it arbitrarily. 
Let $0 < \eps < 1/200$ and apply Corollary~\ref{23stability} to obtain $n_0$.
Now fix an integer $n > n_0$, and let $A \subseteq [n]$ be such that
$$
f(A,2)=f(n,2) \geq 2^{\lceil n/2\rceil}.
$$
By Corollary~\ref{23stability}, we have that either
$|A \bigtriangleup \Odd| \leq \eps n$; or
$|A \bigtriangleup I_2| \leq \eps n$.
Thus $|A|\le \ceil{n/2}+\ep n$. We will use the following claim throughout the proof.
\begin{claim}\label{cl-matching}
	For all $x\in A$, every matching in $L_x[A\setminus\{x\}]$ has size less than $3\ep n$.
\end{claim}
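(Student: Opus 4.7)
The plan is to prove Claim~\ref{cl-matching} by contradiction, using Lemma~\ref{matching} as the workhorse. Suppose some $x\in A$ admits a matching $M_x$ in $L_x[A\setminus\{x\}]$ of size $m\ge 3\varepsilon n$. Applying Lemma~\ref{matching} with $r=2$ immediately yields
$$f(A,2)\leq 3^m\cdot 2^{|A|-2m}.$$
I would then combine this upper bound with the lower bound $f(A,2)=f(n,2)\ge 2^{\lceil n/2\rceil}$ and the stability-derived size bound $|A|\leq \lceil n/2\rceil+\varepsilon n$ (which was noted just before the claim, from Corollary~\ref{23stability}) to obtain
$$2^{\lceil n/2\rceil}\;\leq\; 3^m\cdot 2^{\lceil n/2\rceil+\varepsilon n-2m}.$$

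Taking logarithms and simplifying gives $m(2-\log 3)\leq \varepsilon n$. Since $\log 3<5/3$, we have $2-\log 3>1/3$, and hence $m<3\varepsilon n$, contradicting the assumption $m\ge 3\varepsilon n$. This establishes the claim.

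There is no genuine obstacle here: Lemma~\ref{matching} already does all the combinatorial work, and the only real check is the numerical inequality $2-\log 3>1/3$ (which holds with room to spare since $2-\log 3\approx 0.415$). The point of isolating this as a separate claim is presumably that the bound $3\varepsilon n$ is a convenient threshold to invoke repeatedly when analyzing link graphs of individual vertices of $A$ in the subsequent case analysis that pins down the exact structure of~$A$.
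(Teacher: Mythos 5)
Your proof is correct and follows essentially the same route as the paper: both invoke Lemma~\ref{matching} with $r=2$, combine the resulting bound $f(A,2)\le 3^m 2^{|A|-2m}$ with $|A|\le\lceil n/2\rceil+\ep n$ and $f(A,2)=f(n,2)\ge 2^{\lceil n/2\rceil}$, and conclude via the numerical fact that $2-\log 3>1/5$ (you sharpen this to $>1/3$, which is also true). The only cosmetic difference is that you solve for $m$ directly, whereas the paper substitutes $m=3\ep n$ and exhibits an explicit contradiction.
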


Indeed, if not, then Lemma~\ref{matching} implies that
\begin{eqnarray*}
f(A,2)\le 3^{3\ep n}\cdot 2^{|A|-6\ep n}\le  3^{3\ep n}\cdot 2^{\ceil{\frac{n}{2}}+\ep n-6\ep n}\le 2^{\ceil{\frac{n}{2}}-\frac{\ep n}{5}},
\end{eqnarray*} 
a contradiction.

To find large matchings in link graphs in the next two claims, we will use the fact that a graph $G$ with $e$ edges and maximum degree $\Delta$ contains a matching of size at least $e/(\Delta+1)$.
This is an immediate consequence of Vizing's theorem on edge-colourings.



\noindent
\begin{case} $|A \bigtriangleup \Odd| \leq \eps n$.
\end{case}

\noindent
In this case, we will prove that $A = \Odd$.
It suffices to show that $A\cap E=\emptyset$, since for any such set we have $f(A,2)=2^{|A|}$, and thus $A=\Odd$ is clearly the unique extremal subset among such sets. So suppose that there is some $x \in A \cap E$. The following claim together with Claim~\ref{cl-matching} will complete the proof of this case.

\begin{claim}\label{lem-odd-large-matching}
	$L_x[A\setminus\{x\}]$ contains a matching $M_x$ of size at least $9\ep n$.
\end{claim}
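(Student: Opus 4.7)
The plan is to exhibit $M_x$ entirely inside the link graph on the odd elements, $L_x[O]$, where $O := A \cap \Odd$. Since $x \in E$ we have $x \notin O$, so $L_x[O] \subseteq L_x[A \setminus \{x\}]$, and the hypothesis $|A \bigtriangleup \Odd| \leq \eps n$ gives $|\Odd \setminus O| \leq \eps n$. The key observation is that because $x$ is even, whenever $a$ is odd both $a+x$ and $x-a$ are odd; hence $L_x[\Odd]$ contains two natural families of edges, the \emph{translation edges} $\{a, a+x\}$ for odd $a \in [1, n-x]$ (of which there are $\lceil (n-x)/2 \rceil$), and the \emph{sum edges} $\{a, x-a\}$ for odd $a<x/2$ (of which there are approximately $x/4$).

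I would split on whether $x \leq n/2$ or $x > n/2$. In the former case I would rely on the translation edges: each vertex has at most two translation-neighbours, namely $a-x$ and $a+x$, so the translation subgraph has maximum degree $2$; its components correspond to arithmetic progressions with common difference $x$ intersected with $[1,n]$, so it contains no cycles and thus is a union of paths. Passing from $\Odd$ to $O$ destroys at most $2\eps n$ translation edges (each deleted vertex lies in at most two of them), so at least $(n-x)/2 - 2\eps n$ remain, yielding a matching of size at least half that, namely $(n-x)/4 - \eps n \geq n/8 - \eps n$. In the latter case I would rely on the sum edges: in $L_x[\Odd]$ they are already pairwise vertex-disjoint (each vertex $b$ appears in at most the single edge $\{b, x-b\}$) and so form a matching of size roughly $x/4$; passing to $O$ destroys at most $\eps n$ of them, leaving a matching of size at least $x/4 - \eps n - O(1) \geq n/8 - 2\eps n$. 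In either case, since $\eps < 1/200$, we obtain a matching of size at least $n/8 - 2\eps n > 9\eps n$, as required.

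I do not foresee a substantive obstacle: the heart of the argument is the purely combinatorial fact that an even integer in $[n]$ participates in $\Theta(n)$ Schur triples whose other two members are odd, and that these triples decompose neatly into two subgraphs of low maximum degree (a matching and a union of paths). The only real sanity check is to bound, crudely, the number of these edges destroyed by deleting the few odd elements missing from $A$, which is immediate from the maximum-degree observation. A mild cosmetic choice is whether to handle both families simultaneously via a single Vizing-type bound $e/(\Delta+1)$ applied to the combined subgraph (which has $\Delta \leq 3$ and $e \geq n/2 - x/4$), but the two-case argument above gives the cleaner constants.
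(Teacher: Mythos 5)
Your proof is correct and follows essentially the same route as the paper: both work inside $L_x[\Odd]$ (using $|A\bigtriangleup \Odd|\le\eps n$ to pass to $A\cap\Odd$), identify the same edges $\{a,a+x\}$ and $\{a,x-a\}$, and extract a large matching via a bounded-degree argument, discounting the few deleted vertices. The only difference is cosmetic: the paper applies a single Vizing-type bound $e/(\Delta+1)$ to the whole link graph with a case split at $x>n-60\eps n$, whereas you split at $x=n/2$ and treat the translation and sum edges separately.
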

\begin{claimproof}
Since $x$ is even, $A\cap\Odd\subseteq A\setminus\{x\}$, and thus $L_x[A\cap\Odd]\subseteq L_x[A\setminus\{x\}]$. Also, since $|A \bigtriangleup \Odd| \leq \eps n$, it suffices to show that $L_x[\Odd]$ contains a matching of size at least $10\eps n$. 
First, we will assume that $x\le n-60\ep n$.  In $L_x[\Odd]$, every odd number in the set $[x]\cup [n-x+1,n]$ has degree one, and all the other odd numbers have degree two. Hence, there are exactly $x$ integers of degree one, and the remaining $\lceil (n-2x)/2\rceil$ integers have degree two. Therefore $e(L_x[\Odd]) \geq \frac{1}{2}(x+(n-2x))$.
By Vizing's theorem, $L_x[\Odd]$ contains a matching of size at least
\begin{eqnarray*}
\frac{e\left(L_x[\Odd]\right)}{\Delta\left(L_x[\Odd]\right)+1}\ge  \frac{n-x}{6}\ge 10\ep n,
\end{eqnarray*}
a contradiction.
Now, if $x>n-60\ep n$, then $M=\{\{1,x-1\},\{3,x-3\},\ldots,\{x/2-1,x-(x/2-1)\}\}\subseteq L_x[\Odd]$ is a matching of size $\floor{x/4} \geq n/4-20\eps n \geq 10\eps n$.
\end{claimproof}

\begin{case} $|A \bigtriangleup I_2| \leq \eps n$.
\end{case}

\noindent
Now $A$ has few elements from $I_1$. We first show that the smallest element in $A$ cannot be far from $n/2$. For all $i \in \mathbb{N}$, denote by $\de_i$ the $i^{\rm th}$ smallest element in $A$.
We may suppose that $\delta_1 \in I_1$ since otherwise $A \subseteq I_2$ is sum-free and we are done.

\begin{claim}\label{de1}
$\de_1 > n/2-12\eps n$.
\end{claim}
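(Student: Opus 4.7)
My plan is to argue by contradiction: assume $\delta_1 \leq n/2 - 12\epsilon n$ and exhibit a matching in $L_{\delta_1}[A \setminus \{\delta_1\}]$ of size at least $3\epsilon n$, directly contradicting Claim~\ref{cl-matching}.

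First I would classify the Schur triples that produce edges of the link graph. Any triple $\{u,v,\delta_1\}\subseteq A$ with $u+v=\delta_1$ is impossible, since $\delta_1=\min A$ and $u,v>0$. Hence every edge of $L_{\delta_1}[A\setminus\{\delta_1\}]$ comes from a pair $\{y,y+\delta_1\}$ with both elements in $A\setminus\{\delta_1\}$.

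Next I would set up an auxiliary graph $H$ on the vertex set $I_2$, putting an edge between $y$ and $y+\delta_1$ whenever both lie in $I_2$. The number of edges equals $|I_2\cap (I_2-\delta_1)|=n-\delta_1-\lfloor n/2\rfloor \geq 12\epsilon n$ by the contradiction hypothesis. Each $y\in I_2$ has at most two potential neighbours, namely $y-\delta_1$ and $y+\delta_1$, so $\Delta(H)\leq 2$. By the standard bound $e(H)/(\Delta(H)+1)$ (a consequence of Vizing's theorem, exactly as used in Claim~\ref{lem-odd-large-matching}), $H$ has a matching $M$ of size at least $4\epsilon n$.

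Finally I would transfer $M$ to $L_{\delta_1}[A\setminus\{\delta_1\}]$. Since $|I_2\setminus A|\leq\epsilon n$, removing these missing vertices discards at most $\epsilon n$ edges of $M$, leaving a submatching $M'$ of size at least $3\epsilon n$ whose edges all have both endpoints in $A\cap I_2$. Each such edge $\{y,y+\delta_1\}$ yields a genuine Schur triple $\{y,y+\delta_1,\delta_1\}$ of three distinct elements of $A$, because $\delta_1\leq n/2<\lfloor n/2\rfloor+1\leq y<y+\delta_1$; hence $M'$ is a matching in $L_{\delta_1}[A\setminus\{\delta_1\}]$, contradicting Claim~\ref{cl-matching}. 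There is no real obstacle here beyond verifying these arithmetic bounds; the key observation that unlocks everything is that the link graph restricted to $I_2$ is a disjoint union of paths (maximum degree~$2$), so the Vizing-type bound gives a linear-size matching immediately.
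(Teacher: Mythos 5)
Your proposal is correct and follows essentially the same route as the paper: both argue by contradiction via Claim~\ref{cl-matching}, observe that the link graph of $\delta_1$ restricted to $I_2$ (your auxiliary graph $H$ is exactly $L_{\delta_1}[I_2]$) has maximum degree $2$ and roughly $\lceil n/2\rceil-\delta_1\ge 12\eps n$ edges, apply the $e/(\Delta+1)$ matching bound to get a matching of size $4\eps n$, and lose at most $\eps n$ edges when passing to $A\cap I_2$. The arithmetic and the transfer step match the paper's argument.
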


\begin{claimproof}
Suppose not.
	Similar to Case~1, since $\de_1\in I_1$, we have that $A\cap I_2\subseteq A\setminus\{\de_1\}$. Therefore, it suffices to show that the link graph $L_{\de_1}[A\cap I_2]$ contains a matching of size at least $3\ep n$ which contradicts Claim~\ref{cl-matching}. Since $|A \bigtriangleup I_2| \leq \eps n$, we only need to show that $L_{\de_1}[I_2]$ contains a matching of size at least $4\ep n$. 
	In $L_{\de_1}[I_2]$, every element in $[\floor{n/2}+1,\floor{n/2}+\de_1]\cup [n-\de_1+1,n]$ has degree one, and all the other $\ceil{n/2}-2\de_1$ elements have degree two. Therefore $e(L_{\delta_1}[I_2]) \geq \frac{1}{2}(2\delta_1+2(\lceil n/2\rceil-2\delta_1)) = \lceil n/2\rceil-\delta_1$. Therefore, by Vizing's theorem, $L_{\delta_1}[I_2]$ contains a matching of size at least
	\begin{eqnarray*}
		\frac{e\left(L_{\de_1}[I_2]\right)}{\Delta\left(L_{\de_1}[I_2]\right)+1}\ge 
		\frac{\ceil{\frac{n}{2}}-\de_1}{3}\ge \frac{n}{6}-\frac{\de_1}{3}\ge 4\ep n,
	\end{eqnarray*}
a contradiction.
\end{claimproof}

\medskip
\noindent
Define $k$ such that $A \cap I_1 = \lbrace \delta_1,\ldots,\delta_k\rbrace$.
Then $B := 2\cdot (A \cap I_1)$ is a subset of $I_2$ by Claim~\ref{de1}.
There are two ways of colouring each of the $k$ pairs $\lbrace \delta_i,2\delta_i\rbrace$, so
$$
f(A,r) \leq 2^k \cdot 2^{(A \cap I_2) \setminus B} \leq 2^k \cdot 2^{|I_2|-k} = 2^{\lceil n/2\rceil}
$$
with equality in the second inequality if and only if $|(A \cap I_2)\setminus B| = |I_2|-k$, i.e. if and only if $I_2\setminus A \subseteq B$.

Suppose now that $\delta_1+\delta_2 \leq n$.
Then $\delta_1 \in I_1$ (but $\delta_2$ may or may not be).
Since $\delta_1+\delta_2 \notin B$, we have $\delta_1+\delta_2 \in A$. 
Let $C := \lbrace \delta_1,\delta_2,2\delta_1,\delta_1+\delta_2,2\delta_2\rbrace$.
Then $\lbrace \delta_1,\delta_2,\delta_1+\delta_2\rbrace \subseteq A \cap C \subseteq C$, and one can easily check that there are at most six valid colourings of $A \cap C$.

Suppose first that $\delta_2 \in I_1$. Then $I_2 \cap C = \lbrace 2\delta_1,\delta_1+\delta_2,2\delta_2\rbrace$.
So
$$
f(A,r) \leq 2^{k-2} \cdot 2^{|I_2\setminus C|-(k-2)} \cdot 6 = 6 \cdot 2^{|I_2|-3} = \frac{3}{4} \cdot 2^{\lceil n/2\rceil},
$$
a contradiction.
Suppose instead that $\delta_2 \in I_2$. Then $k=1$ and $I_2 \cap C = \lbrace \delta_2,2\delta_1,\delta_1+\delta_2\rbrace$, and
$$
f(A,r) \leq 2^{|I_2\setminus C|} \cdot 6 = 6 \cdot 2^{|I_2|-3} = \frac{3}{4}\cdot 2^{\lceil n/2\rceil}. 
$$
Thus $\delta_1+\delta_2 \geq n+1$.

So certainly $|A \cap I_1| \leq 1$, and $I_2 \setminus A \subseteq \lbrace 2\delta_1\rbrace$, but we can say more.
If $A \cap I_1=\emptyset$ then $A = I_2$ and we are done.
So we may assume that $A\cap I_1 = \lbrace \delta_1\rbrace$.
Then either $\delta_2=\lfloor n/2\rfloor+1$; or $2\delta_1=\lfloor n/2\rfloor+1$ and $\delta_2=\lfloor n/2\rfloor+2$.
In either case,
$$
n+1 \leq \delta_1+\delta_2 \leq 2\lfloor n/2\rfloor +1.
$$
So $n$ is even and we have equality if and only if $(\delta_1,\delta_2)=(n/2,n/2+1)$.
Since $I_2\setminus A \subseteq \lbrace n\rbrace$, there are two candidates for extremal sets: $[n/2,n-1]$ and $[n/2,n]$.
It is easy to see that both have $2^{n/2}$ valid $2$-colourings.
This completes the proof of Theorem~\ref{2colours}.
\hfill$\square$

\section{The proof of Theorem~\ref{45colours}}\label{sec:45}

To prove Theorem~\ref{45colours}, by Theorem~\ref{problem2}, it suffices to determine $g(n,r)$ asymptotically.
Our aim is to show that
$$
g(n,4) = 1 + o(1) \quad\text{and}\quad g(n,5) = \frac{1}{4}\log  30 + o(1).
$$
Recall from the proof of Lemma~\ref{problem2stab23} that we reduced the problem of asymptotically determining $g(n,3)$ to solving a \emph{linear} program. Indeed, we let $(A_1,A_2,A_3)$ be a tuple of sum-free subsets with intersection vector $(d_1,d_2,d_3)$, and found a reduction of Problem~2 into a linear program in variables $d_1,d_2,d_3$.
Our task was then to maximise $\sum_{i \in [3]}d_i\log  i$ subject to $d_1,d_2,d_3 \geq 0$ and $\sum_{i \in [3]}id_i \leq \frac{3}{n}\left\lceil\frac{n}{2}\right\rceil$.

So to prove Theorem~\ref{45colours}, we will again reduce Problem~2 to a linear program in variables $d_1,\ldots,d_r$ and an additional slack variable $a$ (defined below) for $r \in \lbrace 4,5\rbrace$.


Given $n \in \mathbb{N}$, let $A_1,\ldots, A_r\in [n]$ be maximal sum-free sets. Throughout the rest of this section, define $D_1,\ldots,D_r,d_1,\ldots,d_r$ as in Problem~2. Further, we let $C$ be the subset of $[r]$ such that $A_i$ is a type~(a) set for every $i\in C$, and  a type~(b) or~(c) set, for every $i\in [r]\setminus C$. We define 
\begin{align}\label{eq-defab}
a&:=\frac{1}{n}\sum_{i\in [r]\setminus C}\left(\ceil{\frac{n}{2}}-|A_i|\right)=\frac{(r-|C|)}{n}\cdot \ceil{\frac{n}{2}}-\frac{1}{n}\sum_{i \in [r]\setminus C}|A_i|.
\end{align}
An important observation is that, if $A$ is a maximal sum-free set of type (b), then $A$ is precisely $\Odd$, the set of odd integers in $[n]$.

\begin{definition}
Let $r,n \in \mathbb{N}$ and $\eps>0$. 
Suppose that
$$
\alpha_{i1}d_1 +\ldots + \alpha_{ir}d_r + \alpha_ia \leq \beta_i\quad\text{for all}\quad i \leq N(r),
$$
where $N(r)$ is a positive integer depending only on $r$; and $\alpha_{ij},\beta_{ij},\alpha_i \in \mathbb{R}$ for all $i \leq N(r)$ and $j \leq r$.
We say that the set $\mathcal{C}$ whose members are these inequalities is a \emph{family of constraints for $A_1,\ldots,A_r$}.
Further, $\mathcal{C}$ is \emph{$(\eps,r)$-sufficient} if
$$
\max \sum_{i \in [r]}d_i\log i \quad\text{subject to}\quad \mathcal{C} \quad\text{is at most}\quad \max \left\lbrace \frac{1}{2}\log r, \frac{1}{4}\log(r\lfloor r^2/4\rfloor)\right\rbrace + \eps.
$$
\end{definition}

As an example, when $r=3$, we showed that, for every $n \in \mathbb{N}$ and sum-free subsets $A_1,A_2,A_3$ of $[n]$, the family
$$
\left\lbrace d_1 \geq 0; d_2 \geq 0; d_3 \geq 0; d_1+2d_2+3d_3 \leq \frac{3}{n}\lceil n/2\rceil \right\rbrace
$$
of constraints is $(0,3)$-sufficient. For every $\eps>0$, when $n$ is sufficiently large, the family obtained from it by replacing the final inequality with $d_1+2d_2+3d_3 \leq 3/2 + \eps/2$ is still a family of constraints for $A_1,A_2,A_3$, is independent of $n$, and is $(\eps,3)$-sufficient.

The proof of Theorem~\ref{45colours} follows from the next lemma.

\begin{lemma}\label{45colours2}
	Let $r \in \lbrace 4,5\rbrace$. For all $\ep>0$, there exists an $n_0>0$ such that for all integers $n\ge n_0$, every choice of maximal sum-free subsets $A_1,\ldots,A_r$ of $[n]$ has a family of $(\eps,r)$-sufficient constraints.
\end{lemma}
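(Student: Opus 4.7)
My plan is to leverage Theorem~\ref{thm-sftypes} to build $\mathcal{C}$ explicitly, then verify sufficiency by a small LP in each case. By Remark~\ref{maxremark} each $A_i$ may be taken maximal sum-free, so each is of type~(a), (b), or~(c), with every maximal type~(b) set equal to $\Odd$. Let $b$ and $c$ denote the numbers of type~(b) and type~(c) indices; since $b+c+|C|=r\le 5$, only a bounded number of triples $(b,c,|C|)$ arise, and I will handle each separately.

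The first member of $\mathcal{C}$ is the elementary cardinality bound
$$\sum_{i=1}^r i\, d_i + a \;\le\; \frac{r}{2} - \frac{|C|}{10} + \frac{\eps}{4},$$
valid for large $n$ by~\eqref{eq-defab}, $|A_i|\le 2n/5+1$ for type~(a), and $|A_i|=\lceil n/2\rceil$ for maximal type~(b). For further constraints, partition $[n]$ into $R_1=\Odd\cap I_1$, $R_2=E\cap I_1$, $R_3=\Odd\cap I_2$, $R_4=E\cap I_2$, each of density $\tfrac14+o(1)$, and let $d_j^{(k)}$ denote the density of $x\in R_k$ of multiplicity exactly $j$, so $d_j=\sum_k d_j^{(k)}$. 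I will use three structural facts: every type~(b) set equals $\Odd$, so covers $R_1\cup R_3$ and misses $R_2\cup R_4$; every type~(c) set $A_i$ satisfies $|A_i\cap I_1|\le \lceil n/2\rceil-|A_i|$, so the type~(c) indices contribute at most $an$ to the total $I_1$-mass; and every type~(a) set has $|A_i|\le 2n/5+1$. These bounds on $\sum_i|A_i\cap R_k|=\sum_j j\, d_j^{(k)} n$, together with the forbidden combinations that result (for example $d_j^{(2)}=d_j^{(4)}=0$ for $j>c+|C|$, and $d_j^{(1)}=d_j^{(3)}=0$ for $j<b$), translate into linear inequalities in the $d_j^{(k)}$ and $a$. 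A further class of inequalities comes from ``forced overlap'' of type~(c) sets: since any two lie in $I_2$ up to slack $a$, they satisfy $|A_i\cap A_j|/n\ge \tfrac12 - 2a - o(1)$, producing constraints like $d_4+d_2\ge \tfrac12-2a-o(1)$ in the $r=4$, $(b,c,|C|)=(2,2,0)$ case. Marginalising out the $d_j^{(k)}$ via $d_j=\sum_k d_j^{(k)}$ yields constraints purely in $(d_1,\ldots,d_r,a)$, which together form the family $\mathcal{C}$.

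To verify $(\eps,r)$-sufficiency, I solve the resulting LP case by case. Each LP is finite-dimensional over a bounded polytope, so its maximum is attained at a vertex and can be checked by inspection. For $r=4$ the two extremal configurations---all-$\Odd$ and the layered $\Odd\Odd I_2 I_2$---both give $g=1$, so any tight LP must accommodate both; for $r=5$ the layered $I_2 I_2 I_2 \Odd\Odd$ gives $\tfrac14\log 30$, which will be the LP optimum.

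The main obstacle is ensuring the constraints are strong enough to rule out spurious LP optima. For example, in the $r=4$, $(b,c,|C|)=(2,2,0)$ case, using only the cardinality bound and $d_3+d_4\le \tfrac12 + \eps$ (from $\mathrm{mult}(x)\ge 3\Rightarrow x\in \Odd$) allows $d_3=\tfrac12$, $d_2=\tfrac14$, with objective $\tfrac14+\tfrac12\log 3>1$, which is not actually achievable; the forced-overlap inequality $d_4+d_2\ge \tfrac12-2a$ combined with the cardinality bound eliminates it. Locating analogous forced-overlap constraints in each case---in particular for $r=5$ where the number of cases is larger and the target value $\tfrac14\log 30$ is more delicate---is where the bulk of the work will lie.
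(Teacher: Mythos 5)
Your overall strategy --- classify each $A_i$ by Theorem~\ref{thm-sftypes}, derive linear constraints in $d_1,\ldots,d_r$ and the slack $a$ from cardinality bounds, the rigid structure of type~(b)/(c) sets and forced overlaps, then verify each case by an LP --- is exactly the paper's approach, and your constraints match the paper's Lemma~\ref{lem-generalineq}, Observation~\ref{obs-allbut1inI2} and Lemma~\ref{lem-overlap}. But there is a genuine gap in the hardest case, $r=5$ with two type~(b), two type~(c) and one type~(a) set (the paper's Case~5, $(s,t)=(2,2)$). Your framework constrains the type~(a) set $A_5$ \emph{only} through its cardinality $|A_5|\le 2n/5+1$ and its region masses. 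That is provably insufficient: take $a=0$, $A_1=A_2=\Odd$, $A_3=A_4=I_2$, and let $A_5$ consist of $n/5$ odd elements of $I_1$ and $n/5$ even elements of $I_2$. Every one of your constraints is satisfied (the intersection vector is $d_0=\tfrac14$, $d_2=\tfrac1{10}$, $d_3=\tfrac25$, $d_4=\tfrac14$, so $\sum_i i\,d_i=\tfrac{12}{5}$ and the forced overlap $d(A_3\cap A_4)=\tfrac12$ holds), yet the objective is
$$
\tfrac1{10}+\tfrac25\log 3+\tfrac12\cdot 2 \;=\; 1+\tfrac25(\log 3-1)\;\approx\;1.234\;>\;\tfrac14\log 30\;\approx\;1.227 .
$$
So your LP has a spurious optimum above the target. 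Such an $A_5$ is of course not sum-free --- the pairwise sums of its odd lower-half elements form, by Cauchy--Davenport, a large set of even numbers in $I_2$ that must avoid $A_5$ --- but nothing in your constraint family records this.

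Closing this gap is precisely what the paper's sumset argument in Case~5 does: it applies Theorem~\ref{CD} to $S=\{x+y: x,y\in X_2\cup(Y_o\cap J_3)\}$ and uses $S\cap Y_e=\emptyset$ to obtain the constraint $2(|X_2|+|Y_o\cap J_3|)+|Y_e|\le |J_2\cap E|+1$ (inequality~\eqref{eq-5c-22-sumset}), which is what forces $d_3-d_2\le 1/4+4a+6\eps'$ and kills the configuration above. You would need to add a constraint of this kind, derived from the sum-freeness of the type~(a) set itself rather than from its size; the "forced-overlap constraints between type~(c) sets" that you flag as the remaining work will not supply it. (A smaller remark: in the $r=4$, $s=t=2$ case your single inequality $d_2+d_4\ge\tfrac12-2a$ also admits a spurious optimum, e.g.\ $d_2=\tfrac12$, $d_3=\tfrac13$, $d_4=0$ with value $\tfrac12+\tfrac13\log 3>1$; there you do need the separate bounds $d_2\ge\tfrac12-\tfrac{5a}{2}-o(1)$ and $d_3\le 3a+o(1)$, which your finer region decomposition can in fact deliver.)
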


Indeed, suppose that the lemma holds.
The construction after the statement of Theorem~\ref{45colours} shows that, whenever $n$ is a sufficiently large integer, we have $f(n,4) \geq 16^{\lfloor n/4\rfloor} > 2^{(1-\eps)n}$.
Thus it suffices to find $n_0>0$ such that $f(n,4) \leq 2^{(1+\eps)n}$ whenever $n \geq n_0$ is an integer.
Choose $n_0$ so that the conclusions of Theorem~\ref{problem2} (with $r=4$) and Lemma~\ref{45colours2} hold for parameter $\eps/2$ and all $n \geq n_0$.
Now let $n \geq n_0$ be an arbitrary integer. 
By Remark~\ref{maxremark}, there are maximal sum-free subsets $A_1,\ldots,A_4$ of $[n]$ such that $g(A_1,\ldots,A_4) = g(n,4)$.
By Lemma~\ref{45colours2}, $A_1,\ldots,A_4$ has a family of $(\eps/2,4)$-sufficient constraints. Thus $g(n,4) = g(A_1,\ldots,A_4) = \sum_{i \in [4]}d_i \log i \leq 1 + \eps/2$. 
Then
$$
f(n,4) \stackrel{(\ref{eq2})}{\leq} 2^{(g(n,4)+\eps/2)n} \leq 2^{(1+\eps)n},
$$
as required.
The case $r=5$ is almost identical.

\medskip
\noindent
For each $r=4,5$, we split the proof of Lemma~\ref{45colours2} into cases depending on the structure of $A_1,\ldots,A_r$ (obtained from Theorem~\ref{thm-sftypes}).
Then, in each case, we find a family of constraints which is $(\eps,r)$-sufficient.
Given a family $\mathcal{C}$ of inequalities, we must
\begin{enumerate}
\item show that it is a family of constraints for $A_1,\ldots,A_r$, i.e.~that each inequality holds; then
\item show that it is $(\eps,r)$-sufficient, i.e.~consider the linear program $\max \sum_{i \in [r]}d_i \log i$ subject to $\mathcal{C}$, and show that its optimal solution is at most the required value.
\end{enumerate}
Since it is only a serious of tedious calculations, we defer the details of~(2) to the appendix, and limit ourselves to some remarks here.

\subsection{Achieving~(2): Solving linear programs}

Since there are many cases (depending on the structure of $A_1,\ldots,A_r$), and sometimes rather a lot of inequalities in each family of constraints $\mathcal{C}$, where possible, we use Mathematica to solve the resulting linear program $\max_{i \in [r]}d_i\log i$ subject to $\mathcal{C}$.
Suppose that $r=4$ (when $r=5$ the situation is similar).
Given $A_1,\ldots,A_4$, an $(\eps,4)$-sufficient family $\mathcal{C}$ is such that $\sum_{i \in [4]}d_i \log i \leq 1 + \eps$.

There are two cases, depending on whether $A_1,\ldots,A_4$ is close to extremal or not.
Suppose that there is some specific value of $\eps$, say $\eps = 1/1000$, and a family $\mathcal{C}$ of constraints which is $(1/1000,4)$-sufficient, for which Mathematica shows the output $\sum_{i \in [4]}d_i \log i \leq 0.999$.
The level of accuracy of the program is enough for us to know that certainly $g(A_1,\ldots,A_4) = \sum_{i \in [4]}d_i \log i < 1 - 1/2000$.
So in this case, we are done, and in fact since this number is less than our lower bound~(\ref{gbound1}) by some absolute constant, we see that $A_1,\ldots,A_4$ cannot be close to extremal.

If instead, given input $\eps=1/1000$, Mathematica shows an output $0.999 < \sum_{i \in [4]}d_i \log i \leq 1.001$, say, we need to be more careful.
In this case, we will write out the dual program of $\max \sum_{i \in [4]}d_i \log i$ subject to $\mathcal{C}$, which is a minimisation problem. We then exhibit a feasible solution to the dual which is at most $1+ \eps$.
By the weak duality theorem, we see that $\max \sum_{i \in [4]}d_i \log i \leq 1 + \eps$, as required.

\subsection{Linear constraints for general $r$}

To achieve~(1), we will first derive a set of linear constraints which apply for any number $r$ of colours.

\begin{lemma}\label{lem-generalineq}
	For all $\ep>0$ and integers $r\ge 4$, there exists an $n_0>0$ such that the following holds. Let $n,s,t \in \mathbb{N}$ be such that $n\ge n_0$ and $s+t\le r$. Also, let $A_1,\ldots,A_r$ be maximal sum-free subsets of $[n]$ such that $s$ of them are of type (b), $t$ of them are of type (c), and $(d_1,\ldots,d_{r})$ is their intersection vector. Then
	$$
	\sum_{i \in [r]}id_i \leq \frac{r}{2}-\frac{r-s-t}{10}-a+\ep \quad\text{and}\quad d_{r-1}+d_r \leq \frac{1}{2}+\ep.
	$$
\end{lemma}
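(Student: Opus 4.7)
The plan is to prove the two inequalities separately, each by a short direct argument.

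For the first inequality, the key identity is a double-counting: every $x\in [n]$ lies in exactly as many $A_j$'s as is recorded by its membership index $i$, i.e.~$x\in D_i$ contributes $i$ to both $\sum_{i\in [r]} i\,|D_i|$ and $\sum_{j\in [r]}|A_j|$. Hence
\[
\sum_{i\in [r]} i\,d_i \;=\; \frac{1}{n}\sum_{j\in [r]}|A_j|.
\]
I would then split the sum on the right according to the type of each $A_j$. The $s$ type~(b) sets are, by maximality, equal to $\Odd$ and contribute $s\lceil n/2\rceil$. The $t$ type~(c) sets contribute $t\lceil n/2\rceil - an$ by the very definition of $a$ in~\eqref{eq-defab}. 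The $r-s-t$ type~(a) sets each have size at most $2n/5+1$, so they contribute at most $(r-s-t)(2n/5+1)$. Dividing by $n$ and using $\lceil n/2\rceil/n\le 1/2+1/n$, this yields
\[
\sum_{i\in [r]} i\,d_i \;\le\; \frac{2(r-s-t)}{5}+\frac{s+t}{2}-a+\frac{O(r)}{n},
\]
and a direct algebraic rearrangement gives $\tfrac{2(r-s-t)}{5}+\tfrac{s+t}{2}=\tfrac{r}{2}-\tfrac{r-s-t}{10}$, so choosing $n_0$ large compared with $r/\eps$ absorbs the error term into $\eps$.

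For the second inequality, I would show that the set $D_{r-1}\cup D_r$ is sum-free, from which $|D_{r-1}\cup D_r|\le \lceil n/2\rceil$ gives $d_{r-1}+d_r\le 1/2+1/n\le 1/2+\eps$ for large enough $n$. The sum-freeness follows from a one-line pigeonhole argument using $r\ge 4$: if $\{x,y,z\}$ were a Schur triple with $x,y,z\in D_{r-1}\cup D_r$, then each of $x,y,z$ is missed by at most one of the sets $A_1,\ldots,A_r$; so at most three indices $i\in [r]$ have $\{x,y,z\}\not\subseteq A_i$, and since $r\ge 4$ at least one $A_i$ contains the entire triple $\{x,y,z\}$, contradicting that $A_i$ is sum-free.

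I do not expect a genuine obstacle in either part; the only subtleties are bookkeeping the $1/n$ error terms against $\eps$ (handled by taking $n_0$ large) and making sure the assumption $r\ge 4$ is used in exactly the right place for the second inequality.
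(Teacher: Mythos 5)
Your proposal is correct and follows essentially the same route as the paper: the first inequality via the double-count $\sum_i i d_i = \frac{1}{n}\sum_j |A_j|$ split by type (with the type (b)/(c) contribution packaged by the definition of $a$ and the type (a) contribution bounded by $2n/5+1$), and the second via showing $D_{r-1}\cup D_r$ is sum-free by the same pigeonhole count using $r\ge 4$. The only cosmetic difference is that you attribute the $-an$ term to the type (c) sets alone, which is justified since maximal type (b) sets equal $\Odd$ and contribute nothing to $a$.
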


\begin{proof}
	Let $n_0:=r/\ep$ and let $n \geq n_0$ be an integer. For the first inequality, by~\eqref{eq-defab}, we have
	\begin{align*}
		\sum_{i \in [r]}id_i&\leq \frac{s+t}{n}\cdot\ceil{ \frac{n}{2}}+\left(\frac{2}{5}+\frac{1}{n}\right)(r-s-t)-a \leq \frac{r}{2}-\frac{r-s-t}{10}-a+\ep,
	\end{align*}
	where the last inequality follows from $n \geq r/\ep$.  
	To prove the second part of the lemma, since $\ep\ge r/n\ge 1/n$, it suffices to show that the set $D_r\cup D_{r-1}$ is sum-free. Assume to the contrary that there exist $x_1,x_2,x_3\in D_r\cup D_{r-1}$ such that $x_1+x_2=x_3$. For every $i\in [3]$, define $I_i:=\{j: j\in [r]\text{ and }x_i\in A_j\}$ and $\overline{I_i}=[r]\setminus I_i$. Since $x_i\in D_r\cup D_{r-1}$, we have $|\overline{I_i}|\le 1$ for every $i\in [3]$, and therefore
$
	\left|\bigcap_{i\in[3]}I_i\right|\ge r-\sum_{i \in [3]}\left|\overline{I_i}\right|\ge r-3\ge 1$,
	where the last inequality follows from $r\ge 4$. Therefore, there exists an $i\in [r]$ such that $x_1,x_2,x_3\in A_i$, which contradicts $A_i$ being sum-free.
\end{proof}

We will use the next two simple facts repeatedly. We omit their proofs since they follow from the definitions of $a$ and type (c) sets.

\begin{observation}\label{lem-minelement}
Let $k \in [r]$ and suppose that $A_1,\ldots,A_k$ are of type (c). Then
\begin{itemize}
\item[(i)] $\min (A_i)\ge \ceil{n/2}-an$ for all $i \in [k]$.
\item[(ii)] $
\sum_{i\in [k]} d(A_i)\ge k\cdot \frac{\ceil{n/2}}{n}-a$.
\end{itemize}
\end{observation}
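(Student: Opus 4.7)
The plan is just to unpack the definitions and invoke two simple facts: first, every sum-free subset of $[n]$ has size at most $\lceil n/2\rceil$, so every summand in the definition~(\ref{eq-defab}) of $a$ is non-negative; second, by Theorem~\ref{thm-sftypes}, a type~(c) set $S$ satisfies $|S| \le \min(S)$, and, being of type~(c), is not of type~(a), so its index lies in $[r]\setminus C$. In particular, the hypothesis that $A_1,\ldots,A_k$ are all of type~(c) gives $[k] \subseteq [r]\setminus C$.

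For part~(i), I would fix $i \in [k]$ and isolate the $j=i$ term in the non-negative sum
$$an \;=\; \sum_{j \in [r]\setminus C}\bigl(\lceil n/2\rceil - |A_j|\bigr),$$
which gives $an \ge \lceil n/2\rceil - |A_i|$, i.e.~$|A_i| \ge \lceil n/2\rceil - an$. Combining this with the type~(c) inequality $\min(A_i) \ge |A_i|$ yields the claimed bound.

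For part~(ii), I would rearrange the target inequality as
$$\sum_{i \in [k]}\bigl(\lceil n/2\rceil - |A_i|\bigr) \;\le\; an,$$
which is immediate from $[k] \subseteq [r]\setminus C$ together with the non-negativity of the summands in the definition of $a$ (so that summing over $[k]$ is dominated by summing over the larger index set $[r]\setminus C$).

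There is no genuine obstacle; the whole argument is a few lines of bookkeeping around the definition of $a$ and the defining property of type~(c) sets. The only thing to be slightly careful about is that the case of having \emph{no} type~(c) sets ($k=0$) is vacuous, and that the inequality $|A_i| \le \lceil n/2\rceil$ (which drives the non-negativity in~(\ref{eq-defab})) uses only that $A_i$ is sum-free, not its type.
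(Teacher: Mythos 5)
Your proof is correct and is exactly the routine unpacking the authors had in mind: the paper omits the proof of this observation, remarking only that it "follows from the definitions of $a$ and type~(c) sets", and your argument — non-negativity of each summand $\lceil n/2\rceil - |A_j|$ in \eqref{eq-defab} (since sum-free sets have size at most $\lceil n/2\rceil$), the inclusion $[k]\subseteq[r]\setminus C$, and the type~(c) bound $\min(A_i)\ge|A_i|$ — is precisely that. No gaps.
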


The next lemma concerns the size of the intersection of type (c) sets.

\begin{lemma}\label{lem-overlap}
	For all $\ep>0$ and positive integers $r$, there exists an $n_0>0$, such that the following holds. For every integer $n\ge n_0$, let $A_1,\ldots, A_r$ be maximal sum-free subsets of $[n]$ such that there is some $k \leq r$ for which $A_1,\ldots,A_k\subseteq [n]$ are type~(c) sets. Then
	$$
	d(\cap_{i\in [k]}A_i)\ge \frac{1}{2}-ka-\ep.
	$$
\end{lemma}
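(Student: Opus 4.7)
The plan is to bound $|A_1 \cap \cdots \cap A_k|$ below via a union-bound-in-complements argument, using the fact that every type (c) set is squeezed into a short tail of $[n]$ whose length is controlled by $a$.

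First, I would set $n_0 := r/\eps$ and fix $n \geq n_0$. By the definition of $a$ in~\eqref{eq-defab} and the fact that each summand is nonnegative, every type (c) set $A_i$ satisfies $\lceil n/2\rceil - |A_i| \leq an$, so $|A_i| \geq \lceil n/2\rceil - an$. Because $A_i$ is of type (c), $\min(A_i) \geq |A_i| \geq \lceil n/2\rceil - an$, which is exactly Observation~\ref{lem-minelement}(i). Consequently each $A_i$ (for $i \in [k]$) lies inside the common universe
\[
U := \bigl[\lceil n/2\rceil - an,\; n\bigr], \qquad |U| = \lfloor n/2\rfloor + an + 1.
\]

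Next I would apply the trivial inequality $|A_1 \cap \cdots \cap A_k| \geq \sum_{i\in[k]}|A_i| - (k-1)|U|$, which follows from the union bound applied to the complements (inside $U$) of the $A_i$. Using that the sum $\sum_{i \in [r]\setminus C}(\lceil n/2\rceil - |A_i|) = an$ has nonnegative terms, the subsum over $[k]$ is also at most $an$, so $\sum_{i\in [k]}|A_i| \geq k\lceil n/2\rceil - an$. Plugging in:
\[
|A_1 \cap \cdots \cap A_k| \geq k\lceil n/2\rceil - an - (k-1)\bigl(\lfloor n/2\rfloor + an + 1\bigr).
\]
A short case check on the parity of $n$ shows $k\lceil n/2\rceil - (k-1)\lfloor n/2\rfloor \geq n/2$, so the right-hand side is at least $n/2 - kan - (k-1)$.

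Dividing by $n$ yields $d(\cap_{i\in [k]} A_i) \geq 1/2 - ka - (k-1)/n$, and the choice $n_0 = r/\eps \geq k/\eps$ forces $(k-1)/n \leq \eps$, giving the claim. There is no real obstacle here: the argument is essentially two lines (a containment in a short interval, then a complementary union bound), and the only thing to watch is the small additive error $(k-1)/n$, which is absorbed into $\eps$ by choosing $n_0$ large relative to $r$.
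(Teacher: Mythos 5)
Your proposal is correct and uses essentially the same argument as the paper: both confine each type~(c) set to the tail interval $[\lceil n/2\rceil - an, n]$ via Observation~\ref{lem-minelement}, lower-bound $\sum_{i\in[k]}|A_i|$ by $k\lceil n/2\rceil - an$, and then apply the complementary union bound $|{\cap_i A_i}| \geq \sum_i |A_i| - (k-1)|U|$ (the paper phrases this same inequality as a double-counting contradiction). The only difference is presentational, and your handling of the $O(k/n)$ error term is fine.
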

\begin{proof}
 Let $n_0:=k/\ep$ and let $n > n_0$ be an integer and $A_1,\ldots,A_r$ be subsets of $[n]$ as in the statement.
 	Let $A^*=\cap_{i\in [k]}A_i$. It suffices to show that $|A^*|\ge \ceil{n/2}-(k-1)-kan$. By Observation~\ref{lem-minelement}(i), for all $i\in [k]$, $\min(A_i)\ge \ceil{n/2}-an$, and therefore  $A_i\subseteq I_a := \left[\ceil{n/2}-an,n\right]$.
	Every integer $x$ in the subset $L:=I_a\setminus A^*$ lies in at most $k-1$ of the $A_i$'s.
		Assume to the contrary that $|A^*|< \ceil{n/2}-(k-1)-kan$. 
		Then
	\begin{align}\label{eq-overlap2}
	\nonumber \sum_{i\in[k]}|A_i|&\le k\cdot |A^*|+(k-1)|L|  = (k-1)|I_a|+|A^*|\\
	\nonumber &< (k-1)(n-(\lceil n/2\rceil-an)+1) + \lceil n/2\rceil - (k-1)-kan\\
	&= (k-1)n - (k-2)\lceil n/2\rceil - an.
	\end{align}
But Observation~\ref{lem-minelement}(ii) implies that
$
	\sum_{i\in[k]}|A_i|\ge k\ceil{n/2}-an$.
	Together with~\eqref{eq-overlap2}, we get $2(k-1)\ceil{n/2}<(k-1)n$, a contradiction. 
\end{proof}
Throughout the rest of the paper, given $a$ defined by~(\ref{eq-defab}), we will let
\begin{equation}\label{Idef}
J_1:=\left[\floor{\frac{n}{2}}-an\right],\quad J_2:=\left[\floor{\frac{n}{2}}-an+1,n\right],\quad\text{and}\quad J_3:=\left[\floor{\frac{n}{2}}-an+1, \floor{\frac{n}{2}}\right]
\end{equation}
and will refer to $J_1$, $J_2$, and $J_3$ as the \emph{first}, \emph{second}, and \emph{middle interval} respectively. Note that by definition $an$ is an integer, and the set $J_2\setminus J_3=I_2$ is a sum-free set of maximum size $\ceil{n/2}$. The following observation is a straightforward consequence of Observation~\ref{lem-minelement}(i) and the fact that the unique maximal sum-free subset of type~(b) is $\Odd$.

\begin{observation}\label{obs-allbut1inI2}
\begin{itemize}
\item[(i)]	If $A_i$ is a maximal sum-free set of type~(c), then $|A_i\setminus J_2| \leq 1$.
\item[(ii)]
	For all $\ep>0$ and positive integers $r$, there exists $n_0>0$ such that the following holds. Let $A_1,\ldots,A_{r}$ be maximal sum-free subsets of $[n]$ such that $s$ of them are of type (b), $t$ of them are of type (c), and $r-s-t$ of them are of type~(a). Then,
	$$
	d\left((D_0 \cup \ldots \cup D_{r-s-t}) \cap J_1\cap E\right), d\left( (D_s \cup \ldots \cup D_{r-t}) \cap J_1\cap O\right)\ge  1/4-a/2-\ep.
	$$
	\end{itemize}
\end{observation}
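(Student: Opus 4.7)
The plan is to deduce both parts from Observation~\ref{lem-minelement}(i) together with the explicit description of type (b) and (c) sets. Part (i) will be almost immediate, while part (ii) will be a short counting argument tracking how many of the $A_i$'s contain an even (resp.\ odd) element of the initial segment $J_1$.

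For part (i), let $A_i$ be a maximal sum-free set of type (c). Observation~\ref{lem-minelement}(i) gives $\min(A_i) \geq \lceil n/2\rceil - an$. When $n$ is odd, $\lceil n/2\rceil - an = \lfloor n/2\rfloor - an + 1$, the left endpoint of $J_2$, so $A_i \subseteq J_2$. When $n$ is even, $\lceil n/2\rceil - an = \lfloor n/2\rfloor - an$, one below the start of $J_2$, so $A_i \setminus J_2 \subseteq \{\lfloor n/2\rfloor - an\}$ and hence $|A_i \setminus J_2| \leq 1$.

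For part (ii), take $n_0 := 4/\eps$ and let $n \geq n_0$. By part (i), every type (c) $A_i$ satisfies $A_i \cap J_1 \subseteq \{\lfloor n/2\rfloor - an\}$, which is nonempty only when $n$ is even, so there is at most one element $x^* \in J_1$ lying in any type (c) set. I would then argue as follows. For every $x \in (J_1 \cap E) \setminus \{x^*\}$: since each type (b) set equals $\Odd$, $x$ lies in none of the $s$ type (b) sets, and by choice of $x^*$ in none of the $t$ type (c) sets, hence in at most $r - s - t$ of the $A_i$'s, placing $x \in D_0 \cup \cdots \cup D_{r-s-t}$. Symmetrically, for every $x \in (J_1 \cap O) \setminus \{x^*\}$: $x$ lies in all $s$ type (b) sets and in none of the type (c) sets, so it lies in between $s$ and $s + (r - s - t) = r - t$ of the $A_i$'s, placing $x \in D_s \cup \cdots \cup D_{r-t}$.

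It remains to count: $|J_1 \cap E|$ and $|J_1 \cap O|$ are each at least $\lfloor(\lfloor n/2\rfloor - an)/2\rfloor \geq n/4 - an/2 - 1$, so after removing the at most one element $x^*$ and dividing by $n$, each density is at least $1/4 - a/2 - 2/n \geq 1/4 - a/2 - \eps$. The main—indeed, only—point requiring care is the parity bookkeeping for the boundary element $\lfloor n/2\rfloor - an$ when $n$ is even: it is exactly what forces the single-element slack in part (i) and propagates to the additive $\eps$ loss in part (ii), rather than permitting a clean equality.
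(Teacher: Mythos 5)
Your proof is correct and follows exactly the route the paper intends: the paper omits the argument entirely, calling the observation ``a straightforward consequence of Observation~\ref{lem-minelement}(i) and the fact that the unique maximal sum-free subset of type~(b) is $\Odd$,'' and your write-up supplies precisely those details. The parity bookkeeping around the single possible boundary element $\lfloor n/2\rfloor-an$ (the source of the ``$\leq 1$'' in part~(i) and of the $\eps$ loss in part~(ii)) is handled correctly.
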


The final result in this subsection states some constraints involving the intervals $J_2$ and $J_3$.

\begin{lemma}\label{lem-middleelements}
	For all $\ep>0$ and positive integers $r$, there exists $n_0>0$ such that the following holds. Let $k\le r$ and $n\ge n_0$ be positive integers.
	Let $A_{1},\ldots,A_k\subseteq [n]$ be type~(c) maximal sum-free sets and define $q_i:=d(A_i\cap J_3)$ for all $i \in [k]$. Then  
$$\sum_{i\in [k]}q_i\le a+\ep\quad\text{and}\quad d((\cap_{i\in [k]}A_i)\cap I_2)\ge \frac{1}{2}-\sum_{i\in [k]}q_i-a-\ep \geq \frac{1}{2} - 2a - 2\eps.
$$
\end{lemma}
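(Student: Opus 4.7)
The plan is to exploit two simple features of the type~(c) sets $A_1,\ldots,A_k$: each is essentially contained in $J_2$, and each has a shortfall from the maximum size $\lceil n/2 \rceil$ that is controlled by $a$. More precisely, I would introduce $a_i := (\lceil n/2 \rceil - |A_i|)/n$ for $i \in [k]$, so that $\sum_{i \in [k]} a_i \le a$ by the definition~\eqref{eq-defab} of $a$ (the type~(b) sets contribute $0$ to $a$, and the type~(c) sets in the statement are a subset of those summed there).

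For the first inequality, Observation~\ref{lem-minelement}(i) applied to the individual $A_i$ gives $\min(A_i) \ge \lceil n/2 \rceil - a_i n$, so
\[
A_i \cap J_3 \subseteq \bigl[\lceil n/2\rceil - a_i n,\, \lfloor n/2 \rfloor\bigr],
\]
an interval of length at most $a_i n + 1$. Hence $q_i n \le a_i n + 1$, and summing over $i \in [k]$ yields $\sum_i q_i \le a + k/n \le a + \ep$ provided $n \ge k/\ep$.

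For the second inequality, since $J_2 = J_3 \sqcup I_2$ is a disjoint decomposition, I would write
\[
|A_i| = |A_i \cap J_3| + |A_i \cap I_2| + |A_i \setminus J_2|
\]
and invoke Observation~\ref{obs-allbut1inI2}(i), which gives $|A_i \setminus J_2| \le 1$. Solving for $|A_i \cap I_2|$ and using $|A_i| = \lceil n/2 \rceil - a_i n$ produces $|A_i \cap I_2| \ge \lceil n/2 \rceil - a_i n - q_i n - 1$, hence $|I_2 \setminus A_i| \le a_i n + q_i n + 1$. Setting $A^* := \bigcap_{i \in [k]} A_i$, a union bound on $I_2 \setminus A^* = \bigcup_i (I_2 \setminus A_i)$ then gives
\[
|A^* \cap I_2| \;\ge\; \lceil n/2 \rceil - an - \Bigl(\sum_i q_i\Bigr) n - k,
\]
and dividing by $n$ (absorbing $k/n$ into $\ep$) yields the middle bound of the lemma. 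The final inequality $\ge 1/2 - 2a - 2\ep$ follows by substituting in the first inequality. The argument is essentially a counting exercise once the bookkeeping is in place, so I do not expect a serious obstacle; the only care point is that the additive $+1$ errors (from $|A_i \setminus J_2| \le 1$ and from the interval-length rounding in $J_3$) collectively contribute at most $\ep n$ after summation over $i$, which is why $n_0$ must be taken of order $k/\ep$.
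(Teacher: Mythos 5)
Your proof is correct and follows essentially the same route as the paper's: both rest on the type~(c) property $|A_i|\le\min(A_i)$, on Observation~\ref{obs-allbut1inI2}(i), and on a counting/union bound over $I_2\setminus A_i$; you simply track the individual shortfalls $a_i=(\lceil n/2\rceil-|A_i|)/n$ rather than working with the aggregate sum $\sum_i d(A_i)$ as in~(\ref{aieq}). The per-index bookkeeping is sound (in particular $\sum_{i\in[k]}a_i\le a$ holds because every summand in~(\ref{eq-defab}) is nonnegative), so no changes are needed.
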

\begin{proof}
	Let $n_0:=r/\ep$ and let $n \geq n_0$ be an integer. Since $A_i$ has $q_in$ elements in $J_3$, we have that $\min(A_i)\le \floor{\frac{n}{2}}+1-q_in$ for all $i \in [k]$.
	Using Observation~\ref{lem-minelement}(ii) and the definition of type (c) sets, we have
	\begin{equation}\label{aieq}
	\frac{k}{n}\ceil{\frac{n}{2}} - a \leq \sum_{i \in [k]}d(A_i) = \sum_{i \in [k]}\frac{|A_i|}{n} \leq \sum_{i \in [k]}\frac{\min(A_i)}{n} \leq \frac{k}{n}\left(\ceil{\frac{n}{2}}+1\right) - \sum_{i \in [k]}q_i.
	\end{equation}
Thus	 $\sum_{i \in [k]}q_i \leq k/n+a \leq a+\eps$, proving the first inequality.
	To prove~(ii), let $B := I_2 \setminus \cap_{i\in [k]}A_i$. In other words, $B$ is the set of all elements in $I_2 = J_2\setminus J_3$ that are missing from at least one of the $A_1,\ldots,A_k$.
	By Observation~\ref{obs-allbut1inI2}(i),
	\begin{align}\label{eq-aiqi}
	\sum_{i\in [k]}d(A_i)&\le \sum_{i\in [k]} q_i+\sum_{i\in [k]} d(A_i\cap (J_2\setminus J_3))+\frac{k}{n} \le \sum_{i\in [k]}q_i+\frac{k}{n}\ceil{n/2}-d(B)+\ep.
	\end{align} 
	Thus
	$$
	d(B) \leq \sum_{i \in [k]}q_i + \frac{k}{n}\lceil n/2\rceil - \sum_{i \in [k]}d(A_i) + \eps \stackrel{(\ref{aieq})}{\leq} \sum_{i \in [k]}q_i+a+\eps,
	$$
	and the second required inequality follows.
\end{proof}

\subsection{The 4 colour case}

\emph{Proof of Lemma~\ref{45colours2} when $r=4$.}
Let $\eps>0$ and choose $\eps',n_0>0$ such that $1/n_0\ll\ep'\ll\ep \leq 1/100$.
	(we may assume the last inequality without loss of generality).
	Let $n \geq n_0$ be an integer and let $A_1,\ldots, A_4\subseteq [n]$ be maximal sum-free sets with intersection vector 
	$(d_1,\ldots,d_4)$ as defined in Problem~2. 
	We need to obtain a family of $(\eps,4)$-sufficient constraints.
	We have the following set of basic constraints which will be used throughout the proof:
	\begin{align}\label{eq-4c-basic}
	\begin{cases}
		d_i \ge 0 \text{ for all }i\in \{0,\ldots,4\},\nonumber\\
		\displaystyle\sum_{i\in\{0,\ldots,4\}}d_i\le 1,\nonumber\\
		d_3+d_4\le\ceil{\frac{n}{2}}/n\le\frac{1}{2}+\ep',\tag{C1}
		\end{cases}
	\end{align}
	where the last inequality follows from Lemma~\ref{lem-generalineq}.
	
	Suppose first that there is an $i \in [4]$ for which $A_i$ is of type (a).
	By Lemma~\ref{lem-generalineq}, 
		we have that
		\begin{equation}\tag{C0$^*$}\label{C0}
		\sum_{i \in [4]}id_i\le 19/10+\ep'.
		\end{equation}
		(The $^*$ denotes the fact that this inequality does not hold in Case~1 onwards.)
	The family $\lbrace \eqref{C0},\eqref{eq-4c-basic}\rbrace$ is $(\eps,4)$-sufficient (here and from now on, see Lemma~\ref{suff}).
	
Thus we can assume that all of $A_1,\ldots,A_4$ are of type~(b) or~(c).
Let $s$ be the number of $A_i$ of type~(b) and $t$ the number of $A_i$ of type~(c) (so $s+t=4$).
Define $a$ as in~(\ref{eq-defab}).
By Lemma~\ref{lem-generalineq} (with $\ep'$ playing the role of $\ep$), we have
	\begin{align}\label{eq-4c-idi}
	\sum_{i\in [4]}id_i\le 2-a+\ep'.\tag{C2}
	\end{align} 
Suppose that $a \geq 1/10$.
Then~\eqref{eq-4c-idi} implies that~\eqref{C0} holds. But, as we have seen, $\lbrace \eqref{C0},\eqref{eq-4c-basic}\rbrace$ is an $(\eps,4)$-sufficient family.
	Hence, throughout the rest of the proof, we can assume that
	\begin{align}\label{eq-4c-asmall}
		a<1/10.\tag{C3}
	\end{align}
	Given $a$, define $J_1,J_2,J_3$ as in~(\ref{Idef}).
By Observation~\ref{obs-allbut1inI2}(ii),
	\begin{align}\label{eq-4c-d0ds}
\left.
\begin{array}{r@{}l}
	d_0 &\geq d(D_0\cap J_1) \\
	d_s &\geq d(D_s\cap J_1)
\end{array}
\right\rbrace \ge \frac{1}{4} - \frac{a}{2} -\eps'.\tag{C4}
	\end{align}

	\begin{case}
		$s \in \lbrace 0,1\rbrace$.
	\end{case}

		Suppose first that $s=0$.
		Then $A_1,\ldots,A_4$ are all type~(c), so Lemma~\ref{lem-overlap} (with $\ep'$ playing the role of $\ep$) implies that 
		\begin{align}\label{eq-4c-alltypec-d4}
		d_4\ge \frac{1}{2}-4a-\ep'.\tag{C5}
		\end{align}
		The family $\lbrace\eqref{eq-4c-basic},\eqref{eq-4c-idi},\eqref{eq-4c-d0ds},\eqref{eq-4c-alltypec-d4}\rbrace$ is $(\eps,4)$-sufficient.
				
Suppose instead that $s=1$. Without loss of generality, let $A_4$ be the only type (b) set.
		By Lemma~\ref{lem-overlap} (with $\ep'$ playing the role of $\ep$), $d(\cap_{i \in [3]}A_i)\ge 1/2-3a-\ep'$. By Observation~\ref{obs-allbut1inI2}(i), $|\cap_{i\in[3]}A_i \setminus J_2| \leq 1$. Also, every element $x\in \cap_{i\in[3]}A_i$ is in $D_3$ if it is even, and $D_4$ if it is odd.
		Thus
		\begin{align}\label{eq-4c-d3}
		d_3&\ge d((\cap_{i\in[3]}A_i)\cap J_2)-d(O\cap J_2)\ge d(\cap_{i\in[3]}A_i)-\frac{1}{n}-\frac{1}{n}\cdot\ceil{(n-\floor{\frac{n}{2}}+an)/2}\tag{C6}\\
		\nonumber &\ge \frac{1}{2}-3a-\ep'-\frac{1}{n}-\frac{1}{4}-\frac{a}{2}-\frac{1}{2n}\ge \frac{1}{4}-\frac{7a}{2}-2\ep'.
		\end{align}
		The family $\lbrace\eqref{eq-4c-basic},\eqref{eq-4c-idi},\eqref{eq-4c-asmall},\eqref{eq-4c-d0ds},\eqref{eq-4c-d3}\rbrace$ is $(\eps,4)$-sufficient.
		This completes the proof of Case~1.

	\begin{case}
		$t \in \lbrace 0,1\rbrace$.
	\end{case}

	Suppose first that $t=0$.
		Then all of $A_1,\ldots,A_4$ are of type~(b), so $D_0$ is the set of evens, $D_4$ is the set of odds, and all the other $D_i$'s are empty. Therefore
		$$
		g(A_1,\ldots, A_4)=2\cdot d_4=\frac{2}{n}\cdot\ceil{\frac{n}{2}}\le 1+\ep',
		$$
		as required.
		Suppose instead that $t=1$.
		Then $D_0\cup D_1$ contains every even integer, and therefore
		\begin{align}\label{eq-4c-d0d1}
		d_0+d_1\ge \frac{1}{n}\cdot\floor{\frac{n}{2}}\ge \frac{1}{2}-\ep'.\tag{C7}
		\end{align}
		The family $\lbrace\eqref{eq-4c-basic},\eqref{eq-4c-idi},\eqref{eq-4c-asmall},\eqref{eq-4c-d0ds},\eqref{eq-4c-d0d1}\rbrace$ is $(\eps,4)$-sufficient.
		This completes the proof of Case~2.	Therefore, the only remaining case is the following.

	\begin{case}
		$s=t=2$. 
	\end{case}  

We will prove that the following constraints hold.
		\begin{align}
		d_2&\ge \frac{1}{2}-\frac{5a}{2}-2\ep',\tag{C8}\label{eq-4c-d2}\\
		d_3&\le 3a+\ep'.\tag{C9}\label{eq-4c-22-d3}
		\end{align}
				Without loss of generality, we can assume that $A_1$ and $A_2$ are of type~(c). 
		We first prove~\eqref{eq-4c-d2}. By~\eqref{eq-4c-d0ds}, we have that $d(D_2\cap J_1)\ge 1/4-a/2-\ep'$. Therefore, we only need to show that $d(D_2\cap J_2)\ge 1/4-2a-\ep'$. 
		Let $B := A_1 \cap A_2 \cap (J_2\setminus J_3)$.
		Then $B \cap E \subseteq D_2 \cap J_2$.
		Lemma~\ref{lem-middleelements} applied with parameter $\eps'/4$ implies that $d(B) \geq 1/2-2a-\eps'/2$.
		Further, $J_2\setminus J_3$ is an interval of length $\lceil n/2\rceil$ so contains at most $n/4+1$ odd elements. Thus
		 $$
		d(D_2 \cap J_2) \geq d(B \cap E) = d(B) - d(B \cap O) \geq \frac{1}{2} - 2a - \frac{\ep'}{2} - d((J_2\setminus J_3) \cap O) \geq \frac{1}{4} - 2a - \eps'.  
		$$

We now prove~\eqref{eq-4c-22-d3}. 
By Observation~\ref{obs-allbut1inI2}(i), we have $|A_1\cap J_1|,|A_2 \cap J_1| \leq 1$. Thus 
$|D_3 \cap J_1| \leq 2$.
Further, we have $A_1 \cap A_2 \cap E \subseteq D_2$ and $A_1 \cap A_2 \cap O \subseteq D_4$.
In particular, $(A_1 \cap A_2) \cap D_3 = \emptyset$. Therefore
$$
d_3 = d(D_3 \cap J_1) + d(D_3\cap J_2) \leq \frac{2}{n} + d(J_2) - d(B) \stackrel{(\ref{Idef})}{\leq} \frac{2}{n} + 1-\frac{1}{n}\cdot\lfloor n/2\rfloor + a - \frac{1}{2} + 2a + \frac{\eps'}{2} \leq 3a + \eps',
$$
as required.
	But $\lbrace\eqref{eq-4c-basic},\eqref{eq-4c-idi},\eqref{eq-4c-asmall},\eqref{eq-4c-d2},\eqref{eq-4c-22-d3}\rbrace$ is an $(\eps,4)$-sufficient family.
This completes the proof of Case~3, the final case.
\hfill$\square$

\subsection{The $5$ colour case}

\emph{Proof of Lemma~\ref{45colours2} in the case $r=5$.}
	Let $\eps>0$ and choose $\eps',n_0>0$ such that $1/n_0\ll\ep'\ll\ep \leq 1/200$.
	(we may assume the last inequality without loss of generality).
	Let $n > n_0$ be an integer and let $A_1,\ldots, A_5\subseteq [n]$ be maximal sum-free sets with intersection vector 
	$(d_1,\ldots,d_5)$. 
	We have the following basic constraints which will be used throughout the proof.
	\begin{align}\label{eq-5c-basic}
	\begin{cases}
	&d_i \ge 0 \text{ for all }i\in \{0,\ldots,5\},\nonumber\\
	&\displaystyle\sum_{i\in\{0,\ldots,5\}}d_i\leq 1.\tag{D1}
	\end{cases}
	\end{align}
	Suppose first that $A_1,A_2$ are of type~(a).
	Now Lemma~\ref{lem-generalineq} (with $\ep'$ playing the role of $\ep$) implies that
		\begin{equation}\tag{D0$^*$}\label{D0}
	\sum_{i \in [5]}id_i \leq 23/10 + \eps'.
	\end{equation}
	(The $^*$ denotes the fact that this inequality does not hold in Case~1 onwards.)
	But $\lbrace\eqref{D0},\eqref{eq-5c-basic}\rbrace$ is an $(\eps,5)$-sufficient family.

	Let $s$ be the number of $A_i$ of type~(b) and $t$ the number of $A_i$ of type~(c). So we may assume that $s+t \in \lbrace 4,5\rbrace$.

		Define $a$ as in~(\ref{eq-defab}).
	Lemma~\ref{lem-generalineq} implies that  
	\begin{align}\label{eq-5c-idi}
	\sum_{i\in [5]}id_i\le
	\left\{
	\begin{array}{ll}
	5/2-a+\ep',&\mbox{\quad if }s+t=5, \\
	12/5-a+\ep',&\mbox{\quad if }s+t=4.
	\end{array}
	\right.\tag{D2}
	\end{align} 
	
	Suppose now that $s+t=5$ and $a\geq 1/5$; or $s+t=4$ and $a \geq 1/10$.
	Then~\eqref{eq-5c-idi} implies that~\eqref{D0} holds.
	But, as above, $\lbrace\eqref{D0},\eqref{eq-5c-basic}\rbrace$ is an $(\eps,5)$-sufficient family.
	Thus we may assume that
	\begin{equation}\label{eq-5c-absmall}
	\text{if } s+t=5,\text{ then }a<1/5;\quad \text{and if } s+t=4,\text{ then }a<1/10.\tag{D3}
	\end{equation}
	By Observation~\ref{obs-allbut1inI2}(ii), we have 
	\begin{eqnarray}\label{eq-5c-case1-d0ds}
	\left.
	\begin{array}{r@{}l}
    d_0 &\geq d(D_0\cap J_1 \cap E) \\
    d_s &\geq d(D_s\cap J_1 \cap \Odd)
  \end{array}
  \right\rbrace \ge \frac{1}{4} - \frac{a}{2} - \eps',\quad\text{if }s+t=5,\\
  \label{eq-5c-case2-d0ds}
  \left.
	\begin{array}{r@{}l}
    d_0+d_1 &\geq d((D_0 \cup D_1)\cap J_1 \cap E) \\
    d_s+d_{s+1} &\geq d((D_s \cup D_{s+1})\cap J_1 \cap \Odd)
  \end{array}
  \right\rbrace \ge \frac{1}{4} - \frac{a}{2} - \eps',\quad\text{if }s+t=4,
  \end{eqnarray}
		We work through some cases depending on the values of $(s,t)$, in increasing order of complexity.

\begin{case}
$5-s \leq 1$ or $5-t \leq 1$.
\end{case}
	In this case, we will see that
	\begin{equation}\label{d0+d1}\tag{D4}
	d_0+d_1 \geq 1/2-a-\eps'.
	\end{equation}
		Suppose first that $5-s \leq 1$. 
		Now at most one $A_i$ can contain an even number, so $d_0+d_1\ge |E|/n \ge 1/2-\ep' \geq 1/2 -a-\eps'$, as required.
		Suppose secondly that $5-t \leq 1$.
		Then summing the inequalities in each of~\eqref{eq-5c-case1-d0ds} and~\eqref{eq-5c-case2-d0ds} implies that $d_0+d_1 \geq 1/2-a-\eps'$.
		But $\lbrace\eqref{eq-5c-basic},\eqref{eq-5c-idi},\eqref{eq-5c-absmall},\eqref{d0+d1}\rbrace$ is an $(\eps,5)$-sufficient family.
		
		The remaining cases are $(s,t) \in \lbrace (1,3),(3,1),(2,3),(3,2),(2,2)\rbrace$.
			
	\begin{case}
		$(s,t) \in \lbrace (1,3), (3,1)\rbrace$.
	\end{case} 
		Suppose that $A_5$ is the only set of type~(a).
		By~\eqref{eq-5c-case2-d0ds}, we have that
		\begin{equation}
		d_0+d_1 \geq 1/4-a/2-\eps'.\tag{D5}\label{eq-5c-1o-d0d1}
		\end{equation}
		Suppose first that $(s,t)=(1,3)$. Then summing the inequalities in~\eqref{eq-5c-case2-d0ds} implies that
		\begin{equation}\label{d0d1d2}\tag{D6}
		d_0+d_1+d_2 \geq 1/2-a-2\eps'.
		\end{equation}
		But $\lbrace\eqref{eq-5c-basic},\eqref{eq-5c-idi},\eqref{eq-5c-absmall},\eqref{eq-5c-1o-d0d1},\eqref{d0d1d2}\rbrace$ is an $(\eps,5)$-sufficient family.
		
		Suppose secondly that $(s,t)=(3,1)$.
		Suppose that $A_4$ is the only set of type~(c).
		We will prove that the following inequalities hold.
		\begin{align}
		&d_1+d_2\ge 1/4-3a/2-3\ep',\tag{D7}\label{eq-5c-3o-d1d2}\\
		&d_3\le 1/4+3a/2+3\ep'.\tag{D8}\label{eq-5c-3o-d3}
		\end{align}
		
		Note that
		\begin{equation}\label{A4A5again}
		A_4 \cap E \subseteq D_1 \cup D_2\quad\text{and}\quad A_4 \cap \Odd \subseteq D_4 \cup D_5.
		\end{equation}
		Further, using Lemma~\ref{lem-overlap} and Observation~\ref{obs-allbut1inI2}(i), we have
		\begin{equation}\label{A4}
		d(A_4 \cap J_2) = d(A_4) - d(A_4\setminus J_2) \geq (1/2-a-\eps') - 1/n \geq 1/2-a-2\eps'.
		\end{equation}
		Therefore
		\begin{eqnarray*}
		d_1+d_2&\ge& d((D_1\cup D_2)\cap J_2)\stackrel{(\ref{A4A5again})}{\ge} d(A_4\cap E \cap J_2) \ge d(A_4\cap J_2)-d(J_2\cap \Odd)\\
		&\stackrel{(\ref{Idef}),(\ref{A4})}{\geq}& (1/2-a-2\eps') - (1/4+a/2+\eps') = 1/4-3a/2-3\eps',
		\end{eqnarray*}
		proving~\eqref{eq-5c-3o-d1d2}.	
		To prove~\eqref{eq-5c-3o-d3},
		\begin{align*}
		d_3 &= d(D_3 \cap J_1) + d(D_3 \cap J_2)\\
		&\leq d(J_1) - d((D_0 \cup D_1) \cap J_1) + d(J_2) - d((D_1 \cup D_2 \cup D_4 \cup D_5) \cap J_2)\\
		&\stackrel{(\ref{A4A5again})}{\leq} 1 - d((D_0 \cup D_1) \cap J_1) - d(A_4 \cap J_2) \stackrel{(\ref{eq-5c-case2-d0ds}),(\ref{A4})}{\leq} 1 - (1/4-a/2-\eps') - (1/2-a-2\eps')\\
		&= 1/4 + 3a/2 + 3\eps',
		\end{align*}
		as desired.
		We have that $\lbrace\eqref{eq-5c-basic},\eqref{eq-5c-idi},\eqref{eq-5c-absmall},\eqref{eq-5c-1o-d0d1},\eqref{eq-5c-3o-d1d2},\eqref{eq-5c-3o-d3}\rbrace$ is an $(\eps,5)$-sufficient family.

	\begin{case}
		$(s,t)=(2,3)$.
	\end{case} 

	We will assume that
		$A_1$ and $A_2$ are type~(b) sets (so $A_1=A_2=\Odd$), and $A_3,A_4,A_5$ are type~(c) sets.
		Let $q_i:=d(A_i\cap J_3)$, for every $i\in \lbrace 3,4,5\rbrace$. First, we will prove that the following constraints hold.
		\begin{align}	
		d_5&\ge 1/4-2a-2\ep',\label{eq-5c-2o3i-d5}\tag{D9}\\
		d_3&\le 1/4+3a+4\ep'.\label{eq-5c-2o3i-d3}\tag{D10}
		\end{align}
		We first prove~\eqref{eq-5c-2o3i-d5}. By Lemma~\ref{lem-middleelements}, we have
		\begin{align}
		\label{d5eq} d(D_5 \cap I_2) &\geq d(\Odd\cap (A_3 \cap A_4 \cap A_5) \cap I_2)\\
		\nonumber &\geq d((A_3 \cap A_4 \cap A_5) \cap I_2) - d(E \cap I_2)\\
		\nonumber &\geq (1/2-2a-\eps') - (1/4+\eps') = 1/4 - 2a - 2\eps',
		\end{align}
		so $d_5$ is certainly at least this quantity.
		We now prove~\eqref{eq-5c-2o3i-d3}.
		Notice that every element of $D_3$ lies in at least one type~(c) set,~i.e. $D_3 \subseteq A_3 \cup A_4 \cup A_5$. 
		So $|D_3 \cap J_1| \leq \sum_{3 \leq i \leq 5}|A_3 \cap J_1| = \sum_{3 \leq i \leq 5}|A_3\setminus J_2| \leq 3$ by Observation~\ref{obs-allbut1inI2}(i).
		Thus
		\begin{eqnarray*}
		d_3 &=& d(D_3 \cap J_2) + d(D_3 \cap J_1) \leq d(J_2) - d(D_5 \cap J_2) + 3/n\\
		&\stackrel{(\ref{Idef}),(\ref{d5eq})}{\leq}& (1/2+a+\eps')-(1/4-2a-2\eps') + 3/n \leq 1/4 + 3a + 4\eps',
		\end{eqnarray*}
		as required.

		Since $A_1=A_2=\Odd$, we have $E \subseteq D_0 \cup \ldots \cup D_3$.
Let $B := J_3 \cap E \cap (D_2 \cup D_3)$.		
		Assume now that $d(B)\le a/4$.
		We claim that, in this case, the following hold: 
						\begin{align}\label{eq-2o-d0d1-1}
		d_0+d_1 &\geq 1/4-a/4-2\eps'.\tag{D11}\\
		\label{eq-2o-d2-1}
		d_2&\ge 1/4-a/2-\ep'.\tag{D12}
		\end{align}
		Indeed, to see the first inequality, observe that		
				\begin{equation}\nonumber
		d_0+d_1\ge d(D_0 \cap J_1) + d(J_3 \cap E)-\frac{a}{4} \stackrel{(\ref{eq-5c-case1-d0ds})}{\ge} \left(\frac{1}{4}-\frac{a}{2}-\eps'\right)+\frac{\lfloor|J_3|/2\rfloor }{n}-\frac{a}{4} \geq \frac{1}{4}-\frac{a}{4}-2\eps'.
		\end{equation}
		The second is a consequence of~\eqref{eq-5c-case1-d0ds}.
		But $\lbrace\eqref{eq-5c-basic},\eqref{eq-5c-idi},\eqref{eq-5c-absmall},\eqref{eq-5c-2o3i-d5},\eqref{eq-5c-2o3i-d3},\eqref{eq-2o-d0d1-1},\eqref{eq-2o-d2-1}\rbrace$ is an $(\eps,5)$-sufficient family.

		The only case left is when $d(B)> a/4$. We claim that, in this case, the following hold:
				\begin{align}\label{eq-2o-d2-2}
		\quad d_2&\ge 1/4-a/4-2\ep'.\tag{D13}\\
		\label{eq-2o-d0-1}
		d_0 &\ge 1/4-a/2-\ep'.\tag{D14}
		\end{align}
		The second inequality is simply~(\ref{eq-5c-case1-d0ds}).
		To see why the first holds, let $m:=\min(B)$. Then 
		$$
		m< \floor{\frac{n}{2}}-2\cdot\left(\frac{an}{4}-1\right)\le \frac{n}{2}-\frac{an}{2}+2.
		$$
		Since $m\in E$, we know that $m\notin A_1\cup A_2$. In addition, since $m\in D_2\cup D_3$, without loss of generality, we can assume that $m\in A_3\cap A_4$. Therefore, by the definition of type~(c) sets, we see that $|A_3|,|A_4|\le m$. Thus $|A_3|+|A_4|< n-an+4$. So, recalling that $|A_1|=|A_2|=\lceil n/2\rceil$, the definition of $a$ implies that $|A_5| = 3\lceil n/2\rceil - an - |A_3| - |A_4| > n/2-4$.	Thus $\min(A_5) > n/2-4$. So $|A_5 \cap J_3| \leq 3$.
		But $E \cap D_3 \subseteq A_3 \cap A_4 \cap A_5$, so $|J_3 \cap E \cap D_3| \leq 3$.
		Since $d(B) > a/4$, we have $d(J_3 \cap E \cap D_2) > a/4-3/n$. Thus
		$$
		d_2 \geq d(J_1 \cap D_2) + d(J_3 \cap E \cap D_2) \stackrel{(\ref{eq-5c-case1-d0ds})}{\geq} 1/4-a/2-\eps' + a/4-3/n = 1/4-a/4-2\eps',
		$$
		as required.		
		Now $\lbrace\eqref{eq-5c-basic},\eqref{eq-5c-idi},\eqref{eq-5c-absmall},\eqref{eq-5c-2o3i-d5},\eqref{eq-5c-2o3i-d3}, \eqref{eq-2o-d2-2},\eqref{eq-2o-d0-1}\rbrace$ is an $(\eps,5)$-sufficient family.

	\begin{case}
		$(s,t)=(3,2)$.
	\end{case}
	We will assume that $A_1$, $A_2$ and $A_3$ are type~(b) sets (and so $A_4$ and $A_5$ are type~(c) sets).
	We will prove that the following constraints hold.
	\begin{align}
	&d_0\ge 1/4-a/2-\ep'.\tag{D15}\label{eq-5c-3o2i-d0}\\
	&d_2+d_5\ge 1/2-2a-\ep',\tag{D16}\label{eq-3o-d2d5}\\
	&d_1+d_3+d_4\le 1/4+5a/2+3\ep',\tag{D17}\label{eq-3o-d1d3d4}\\
	&d_2\le 1/4+a/2+\ep'.\tag{D18}\label{eq-3o-d2}
	\end{align}
	The first inequality~\eqref{eq-5c-3o2i-d0} is simply~\eqref{eq-5c-case1-d0ds}.
	For~\eqref{eq-3o-d2d5}, observe that $E \cap A_4 \cap A_5 \subseteq D_2$, and $\Odd \cap A_4 \cap A_5 = D_5$. So
	\begin{equation}\label{A4A5}
	A_4 \cap A_5 \subseteq D_2 \cup D_5\quad\text{and}\quad d_2+d_5=d(D_2\cup D_5)\ge d(A_4\cap A_5)\ge 1/2-2a-\ep',
	\end{equation}
	where we used Lemma~\ref{lem-overlap} for the final inequality.
	For~\eqref{eq-3o-d1d3d4}, we have
	\begin{eqnarray*}
	d_1+d_3+d_4 &\leq& d((D_1 \cup D_3 \cup D_4) \cap J_1) + d((D_1 \cup D_3 \cup D_4) \cap J_2)\\
	&\stackrel{(\ref{A4A5})}{\leq}& d(J_1) - d(D_0 \cap J_1) + d(J_2 \setminus (A_4 \cap A_5)) \\
	&=& d(J_1) - d(D_0 \cap J_1) + d(J_2) - d(A_4 \cap A_5) + d((A_4 \cap A_5) \setminus J_2)\\
	&\leq& 1 - (1/4-a/2-\eps') - (1/2-2a-\eps') + 1/n = 1/4 + 5a/2 + 3\eps'.
	\end{eqnarray*}
	The final inequality follows from~\eqref{eq-5c-case1-d0ds},~\eqref{A4A5} and Observation~\ref{obs-allbut1inI2}(i).
	
	Finally we will prove~\eqref{eq-3o-d2}.
	Observe that $E \setminus (A_4 \cup A_5) \subseteq D_0$ and $\Odd \setminus (A_4 \cup A_5) \subseteq D_3$. By Observation~\ref{obs-allbut1inI2}(i), we have that $|J_1 \cap (A_4 \cup A_5)|=|(A_4 \cup A_5)\setminus J_2| \leq 2$. Therefore $|(J_1 \cap (D_0 \cup D_3)| \geq |J_1|-2$ and so $|D_2 \cap J_1| \leq 2$. 
	Further, $\Odd \subseteq D_3 \cup D_4 \cup D_5$. In particular, $D_2 \cap J_2 \subseteq E \cap J_2$.
	Combining these facts, we see that
	\begin{align*}
	d_2 = d(D_2 \cap J_1) + d(D_2 \cap J_2) \leq 2/n + d(E \cap J_2) \leq 2/n + \frac{\lceil |J_2|/2\rceil}{n} \stackrel{(\ref{Idef})}{\leq} 1/4 + a/2 + \eps',
	\end{align*}
	as desired.
	But $\lbrace\eqref{eq-5c-basic},\eqref{eq-5c-idi},\eqref{eq-5c-absmall},\eqref{eq-5c-3o2i-d0},\eqref{eq-3o-d2d5},\eqref{eq-3o-d1d3d4},\eqref{eq-3o-d2}\rbrace$ is an $(\eps,5)$-sufficient family.

	\begin{case}
		$(s,t)=(2,2)$
	\end{case} 
	
	We will assume that $A_1, A_2$ are of type~(b); $A_3, A_4$ are of type~(c); and $A_5$ is of type~(a).
	Our immediate aim is to prove that the following inequalities hold.
		\begin{align}\label{eq-5c-22-d0d1}
		&d_0+d_1\ge 1/4-a/2-\ep'.\tag{D19}\\
		\label{eq-5c-22-d3-d2}		&d_3-d_2\le 1/4+4a+6\ep'.\tag{D20}
		\end{align}
		The first is a consequence of~\eqref{eq-5c-case2-d0ds}.
		We will now prove~\eqref{eq-5c-22-d3-d2}.
		This requires careful analysis of the small unstructured set $A_5$.
		Define sets
		\begin{equation}
		I'_1:=\left[\ceil{\frac{n}{4}}\right],\quad I''_1:=J_1\setminus I'_1=\left[\ceil{\frac{n}{4}}+1,\floor{\frac{n}{2}}-an\right],\nonumber
		\end{equation}
		\begin{align*}
		&X_1:=A_5\cap O\cap I'_1, &&X_2:=A_5\cap O\cap I''_1, &&&x_i := d(X_i) \ \text{for } i=1,2;\\ 
		&Y_o:=A_5\cap O\cap J_2, &&  Y_e:=A_5\cap E\cap J_2, &&&y_o := d(Y_o) \ \text{and } y_e := d(Y_e),
		\end{align*}
		\begin{equation*}
		S:=\{x+y:x,y\in X_2\cup (Y_o\cap J_3)\}.
		\end{equation*}
		Clearly, it suffices to show that
					\begin{align}
		&d_3\le 3/8+7a/2-x_2+5\ep' \quad\text{and}\label{eq-5c-22-d3}\\
		&d_2\ge 1/8-a/2-x_2 - \eps'.\label{eq-5c-22-d2}
		\end{align}		
		Let $Z := J_1 \cap (A_3 \cup A_4)$. By Observation~\ref{obs-allbut1inI2}(i), we have $|Z| \leq 2$.
		To prove~\eqref{eq-5c-22-d3}, we bound $d(D_3\cap J_1)$ and $d(D_3\cap J_2)$ separately.
		If $v \in D_3 \cap E$, then $v \in A_3 \cap A_4 \cap A_5$. So $D_3 \cap J_1 \cap E \subseteq Z$.
		Similarly, $D_3 \cap J_1 \cap \Odd \subseteq (A_5 \cap J_1) \cup Z$.
		Thus
		\begin{align}\label{eq-5c-22-d3i1}
		d(D_3\cap J_1)= d(D_3\cap J_1\cap E)+d(D_3\cap J_1\cap O)\le d(Z)+x_1+x_2\le x_1+x_2+\ep'.
		\end{align}
		Now, $A_3 \cap A_4 \cap \Odd \subseteq D_4 \cup D_5$. Further, $D_3 \cap E \subseteq A_5$.
		Therefore
		\begin{align}\label{eq-5c-22-d3i2}
		d(D_3\cap J_2)&= d(D_3\cap J_2\cap E)+d(D_3\cap J_2\cap O)\le y_e+d(J_2\cap O)-d(A_3\cap A_4\cap J_2\cap O)\nonumber\\
		&\le y_e+d(J_2\cap O)-(d(A_3\cap A_4\cap J_2)-d(J_2\cap E))\nonumber\\
		&\le y_e + d(J_2) - d(A_3 \cap A_4 \cap I_2) \le y_e+ (1/2+a+\eps')- (1/2-2a-2\eps') \nonumber\\
		&= y_e + 3a+3\eps',
		\end{align}
		where we used~(\ref{Idef}) and Lemma~\ref{lem-middleelements} for the final inequality.
		
				For every $s \in S$, we have that $s$ is even and at most $n$, and additionally $s \geq 2(\lceil n/4\rceil+1)\geq \lfloor n/2\rfloor-an+1$. Thus $S \subseteq J_2 \cap E$.
		Since $A_5$ is sum-free, we have that $Y_e \cap S = \emptyset$. So $|S|+|Y_e| \leq |J_2 \cap E|$.
		By the Cauchy-Davenport theorem (Theorem~\ref{CD}) applied to $S$, we have $|S| \geq 2|X_2 \cup (Y_o \cap J_3)| - 1$. Thus
		\begin{equation}
		\label{eq-5c-22-sumset} 2(|X_2|+|Y_o \cap J_3|) + |Y_e|  \leq |S|+|Y_e|+1 \leq |J_2 \cap E| + 1 \stackrel{(\ref{Idef})}{\leq} (1/4+a/2+\eps')n.
		\end{equation}
		
		Combining this with~\eqref{eq-5c-22-d3i1} and~\eqref{eq-5c-22-d3i2} we have
		\begin{align*}
		d_3&\le x_1+x_2+y_e+3a+4\ep'\stackrel{\eqref{eq-5c-22-sumset}}{\le} x_1+(1/4+a/2+\ep'-x_2)+3a+4\ep'\\
		&\le 3/8+7a/2-x_2+5\ep',
		\end{align*}
		where we used the trival bound $x_1n \leq \lceil|I_1'|/2\rceil$.
		This finishes the proof of~\eqref{eq-5c-22-d3}.
		
		For~\eqref{eq-5c-22-d2}, notice that
		$\Odd \setminus (A_3 \cup A_4 \cup A_5) \subseteq D_2$.
		 By definition, $J_1\setminus Z$ is disjoint from $A_3\cup A_4$. Therefore
		\begin{align*}
		d_2&\ge d(\Odd \cap I_1'') - d(\Odd \cap I_1'' \cap (A_3 \cup A_4)) - d(\Odd \cap I_1'' \cap A_5)\\
		&\geq d(\Odd \cap I_1'')-d(Z)-x_2\ge \frac{1}{n}\lfloor |I_1''|/2\rfloor -2/n-x_2\ge 1/8 -a/2-x_2-\ep',
		\end{align*}
		as required.
		We have proved~\eqref{eq-5c-22-d2} and hence~\eqref{eq-5c-22-d3-d2}.
		
		The remainder of the proof will be divided into two final subcases. First, suppose that $Y_o\subseteq J_3$. Now~\eqref{eq-5c-22-sumset} implies that $2(x_2+y_o)+y_e\le 1/4+a/2+\ep'$.
		Thus
		\begin{align}\label{eq-5c-22-a5}
		d(A_5) &=x_1+x_2+y_o+y_e\le d(\Odd \cap I'_1)+1/4+a/2+\ep' \le 3/8+a/2+2\ep'.
		\end{align}
		Lemma~\ref{lem-generalineq} implies that	$\sum_{i \in [4]} d(A_i)\le 2-a+\ep'$.
		Adding these, we see that
		\begin{align}\label{eq-5c-22-idi1}
		\sum_{i \in [5]} id_i=\sum_{i \in [5]} d(A_i)\leq 19/8-a/2+3\ep'.\tag{D21}
		\end{align}
		Now $\lbrace\eqref{eq-5c-basic},\eqref{eq-5c-absmall},\eqref{eq-5c-22-d0d1},\eqref{eq-5c-22-d3-d2},\eqref{eq-5c-22-idi1}\rbrace$ is an $(\eps,5)$-sufficient family.
		
	The final subcase is when $Y_o\setminus J_3\neq\emptyset$. Let $w\in Y_o\setminus J_3$. So $w \geq \lfloor n/2\rfloor+1$. We claim that the following inequality holds:
			\begin{align*}\label{eq-5c-22-d3-2}\tag{D22}
		d_3 &\leq 5/16 + 7a/2 + 5\eps'.
		\end{align*}
		To prove this, define
\begin{align*}
X'_1:=X_1\setminus \left\{ \ceil{\frac{n}{4}}\right\}\quad\text{and}\quad
	D:=
	\left\{
	\begin{array}{ll}
		\{w+x:x\in X'_1\}&\mbox{\quad if }w\in \left[\floor{\frac{n}{2}}+1,n-\ceil{\frac{n}{4}}\right], \\
		\quad\\
		\{w-x:x\in X'_1\}&\mbox{\quad if }w\in \left[n-\ceil{\frac{n}{4}}+1,n\right].
	\end{array}
	\right.
\end{align*}
	Now, $w \in \Odd$ and $X_1' \subseteq \Odd$, so every element of $D$ is even.
	In both cases it is easy to check that $D \subseteq E \cap J_2$.
	Since $A_5$ is sum-free, $D\cap Y_e=\emptyset$, and so $D$ and $Y_e$ are disjoint subsets of $E \cap J_2$. In particular,
		\begin{align}\label{x1ye}
		x_1 + y_e \leq |D|/n+y_e + 1/n\le d(J_2\cap E) + 1/n \stackrel{(\ref{Idef})}{\le} 1/4+a/2+\ep'.
		\end{align} 
This then implies that
		\begin{align*}
		d_3 &= d(D_3 \cap J_1) + d(D_3 \cap J_2) \stackrel{(\ref{eq-5c-22-d3i1}),(\ref{eq-5c-22-d3i2})}{\leq} x_1+x_2 + y_e + 3a + 4\eps'\\
		\nonumber &= \frac{1}{2}(x_1+y_e) + \frac{1}{2}(2x_2+y_e) + \frac{x_1}{2} + 3a + 4\eps' \stackrel{(\ref{eq-5c-22-sumset}),(\ref{x1ye})}{\leq} \frac{1}{4} + \frac{a}{2} + \frac{x_1}{2} + 3a + 5\eps'\\
		\nonumber &\leq \frac{5}{16} + \frac{7a}{2} + 5\eps',
		\end{align*}
		This proves the claim. But $\lbrace\eqref{eq-5c-basic},\eqref{eq-5c-idi},\eqref{eq-5c-absmall},\eqref{eq-5c-22-d0d1},\eqref{eq-5c-22-d3-d2},\eqref{eq-5c-22-d3-2}\rbrace$ is an $(\eps,5)$-sufficient family.
		This completes the proof of the final case, and hence completes the proof of Lemma~\ref{45colours2}.
\hfill$\square$

\section{Concluding remarks}\label{sec:conclude}
We determined $f(n,r)$ exactly when $r=2$ (Theorem~\ref{2colours}). It would be interesting to proceed from our stability result (Theorem~\ref{3colours}) and obtain an exact result for $r=3$, and characterise the extremal sets. It seems possible to extract a statement about stability from the proof of Theorem~\ref{45colours} by more careful analysis of the linear programs. That is, the following may be obtainable. For all $\eps >0$, as long as $n$ is a sufficiently large integer: if $r=4$ and $A \subseteq [n]$ is extremal, then one of $A \bigtriangleup \Odd$, $A\bigtriangleup I_2$ and $A \bigtriangleup (\Odd \cup I_2)$ has size at most $\eps n$; and if $r=5$ and $A \subseteq [n]$ is extremal, then $|A \bigtriangleup (\Odd \cup I_2)| \leq \eps n$.

It is also possible that the method used to prove Theorem~\ref{45colours} (namely finding sufficient linear constraints) can prove the analogous result for $r=6$.
The main obstacle is the fact that, among extremal $A_1,\ldots,A_6$, one cannot \emph{a priori} guarantee less than two type~(a) sets.
This leads to 18 different values of $(s,t)$ to consider.
Since the proof for $r=5$ was already very involved, we did not pursue this further.

Finally, for large $r$, the value of $f(n,r)$ and the structure of the extremal sets is completely open.

\section{Acknowledgements}

We are grateful to Tuan Tran for helpful discussions.

\medskip

{\footnotesize \obeylines \parindent=0pt

\begin{tabular}{lll}
Hong Liu, Maryam Sharifzadeh and Katherine Staden\\
Mathematics Institute\\
University of Warwick\\
Coventry\\
CV4 2AL\\
UK\\
\end{tabular}
}

\appendix

\section{}

It remains to prove that the families obtained in the proof of Lemma~\ref{45colours2} are indeed $(\eps,r)$-sufficient. Namely, we require that the following lemma holds.

\begin{lemma}\label{suff}
Given $\eps>0$, for $r \in \lbrace 4,5\rbrace$ there exists $\delta,n_0>0$ such that whenever $\delta \leq \eps' \leq 1/100$ is a real constant and $n \geq n_0$ is an integer and $A_1,\ldots,A_r$ are maximal sum-free subsets of $[n]$, we have that:
The following families (depending on $\eps'$) are $(\eps,4)$-sufficient.
\begin{align*}
&1) &&\lbrace\eqref{C0},\eqref{eq-4c-basic}\rbrace &&&\text{(Case 0)}\\
&2) &&\lbrace\eqref{eq-4c-basic},\eqref{eq-4c-idi},\eqref{eq-4c-d0ds},\eqref{eq-4c-alltypec-d4}\rbrace &&&\text{(Case 1)}\\
&3) &&\lbrace\eqref{eq-4c-basic},\eqref{eq-4c-idi},\eqref{eq-4c-asmall},\eqref{eq-4c-d0ds},\eqref{eq-4c-d3}\rbrace &&&\text{(Case 1)}\\
&4) &&\lbrace\eqref{eq-4c-basic},\eqref{eq-4c-idi},\eqref{eq-4c-asmall},\eqref{eq-4c-d0ds},\eqref{eq-4c-d0d1}\rbrace &&&\text{(Case 2)}\\
&5) &&\lbrace\eqref{eq-4c-basic},\eqref{eq-4c-idi},\eqref{eq-4c-asmall},\eqref{eq-4c-d2},\eqref{eq-4c-22-d3}\rbrace &&&\text{(Case 3)}
\end{align*}
The following families (depending on $\eps'$) are $(\eps,5)$-sufficient.
\begin{align*}
&6) &&\lbrace \eqref{D0}, \eqref{eq-5c-basic} \rbrace &&&\text{(Case 0)}\\
&7) &&\lbrace \eqref{eq-5c-basic}, \eqref{eq-5c-idi} , \eqref{eq-5c-absmall} ,\eqref{d0+d1} \rbrace &&&\text{(Case 1)}\\
&8) &&\lbrace \eqref{eq-5c-basic}, \eqref{eq-5c-idi} , \eqref{eq-5c-absmall} , \eqref{eq-5c-1o-d0d1} , \eqref{d0d1d2} \rbrace &&&\text{(Case 2)}\\
&9) &&\lbrace\eqref{eq-5c-basic},\eqref{eq-5c-idi},\eqref{eq-5c-absmall},\eqref{eq-5c-1o-d0d1},\eqref{eq-5c-3o-d1d2},\eqref{eq-5c-3o-d3}\rbrace &&&\text{(Case 2)}\\
&10) &&\lbrace\eqref{eq-5c-basic},\eqref{eq-5c-idi},\eqref{eq-5c-absmall},\eqref{eq-5c-2o3i-d5},\eqref{eq-5c-2o3i-d3},\eqref{eq-2o-d0d1-1},\eqref{eq-2o-d2-1}\rbrace &&&\text{(Case 3)}\\
&11) &&\lbrace\eqref{eq-5c-basic},\eqref{eq-5c-idi},\eqref{eq-5c-absmall},\eqref{eq-5c-2o3i-d5},\eqref{eq-5c-2o3i-d3}, \eqref{eq-2o-d2-2},\eqref{eq-2o-d0-1}\rbrace &&&\text{(Case 3)}\\
&12) &&\lbrace\eqref{eq-5c-basic},\eqref{eq-5c-idi},\eqref{eq-5c-absmall},\eqref{eq-5c-3o2i-d0},\eqref{eq-3o-d2d5},\eqref{eq-3o-d1d3d4},\eqref{eq-3o-d2}\rbrace &&&\text{(Case 4)}\\
&13) &&\lbrace\eqref{eq-5c-basic},\eqref{eq-5c-absmall},\eqref{eq-5c-22-d0d1},\eqref{eq-5c-22-d3-d2},\eqref{eq-5c-22-idi1}\rbrace &&&\text{(Case 4)}\\
&14) &&\lbrace\eqref{eq-5c-basic},\eqref{eq-5c-idi},\eqref{eq-5c-absmall},\eqref{eq-5c-22-d0d1},\eqref{eq-5c-22-d3-d2},\eqref{eq-5c-22-d3-2}\rbrace &&&\text{(Case 5)}.
\end{align*}
\end{lemma}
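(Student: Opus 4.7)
The plan is a purely computational verification. Each of the 14 families $\mathcal{C}$ defines a linear program in the bounded set of variables $d_0,\ldots,d_r,a$ (all nonnegative, with $\sum_i d_i \leq 1$), and we must show that the maximum of $\sum_{i \in [r]} d_i \log i$ subject to $\mathcal{C}$ is at most $M_r + \eps$, where $M_4 = 1$ and $M_5 = (1/4)\log 30$. Since each LP has at most $r+2 \leq 7$ variables and roughly $10$ non-trivial inequalities, each can in principle be solved directly; but the cleanest way to record the verification is via weak LP duality. For each family I will exhibit nonnegative dual multipliers $\lambda_1,\ldots,\lambda_N$, one per constraint $\sum_j \alpha_{ij}d_j + \alpha_i a \leq \beta_i$, satisfying (i) $\sum_i \lambda_i \alpha_{ij} \geq \log j$ for every $j \in [r]$ and $\sum_i \lambda_i \alpha_i \geq 0$, and (ii) $\sum_i \lambda_i \beta_i \leq M_r + \eps$. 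Weak duality then gives the required primal bound.

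I would group the families into two regimes. In \emph{Regime A}---families 1)--4), 6)--10), and 12)---the structural constraints force $\sum_i i d_i$ to be strictly smaller than $r/2$ (the value attained by the all-$\Odd$ configuration) by an absolute constant $c = c(r) > 0$. In each such case the LP optimum is at most $M_r - c$, so provided $\eps' \ll \eps \leq c$ the target bound follows with ample slack and the dual certificates are easy to construct by hand. As a sample, for family 1) the multipliers $\lambda_{\eqref{C0}} = 1/2$ and $\lambda_{\text{``}d_3+d_4\leq 1/2\text{''}} = \log 3 - 3/2$ verify all dual feasibility conditions and yield the primal bound $1/5 + (\log 3)/2 < 0.993 < 1$. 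All nine cases in Regime A reduce to short such calculations.

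In \emph{Regime B}---families 5), 11), 13), 14)---the tuple $(A_1,\ldots,A_r)$ is close to an extremal construction (usually $\Odd \cup I_2$), so the LP optimum tends to $M_r$ as $\eps' \to 0$ and the dual certificates are essentially unique in the limit. Here the coefficient of the slack variable $a$ appears with both signs across different constraints---positive in those bounding total mass such as \eqref{eq-4c-idi} and \eqref{eq-5c-idi}, and negative in those that force mass into deep intersections such as \eqref{eq-4c-d2} and \eqref{eq-5c-22-d3-d2}---so the multipliers must be tuned to make the composite coefficient of $a$ nonnegative while matching each $\log i$ target exactly. This leads to a small explicit linear system in the $\lambda_i$ which I would solve case-by-case; the $\eps'$-error in each inequality then contributes only a $O(\eps')$ slack to the primal bound, which is absorbed into $\eps$ since $\eps' \ll \eps$.

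The main obstacle will be the $(s,t) = (2,2)$ case at $r = 5$, i.e.~families 13) and 14), where the extremal value $M_5 = (1/4)\log 30$ is genuinely attained by the construction $\Odd \cup I_2$ and the LPs are tight. In 13) the specialised constraint \eqref{eq-5c-22-idi1} (arising from the subcase $Y_o \subseteq J_3$) is essential; without it the LP optimum would exceed $M_5$. In 14) the analogous role is played by \eqref{eq-5c-22-d3-2}. Verifying that these refined inequalities, together with \eqref{eq-5c-22-d0d1} and \eqref{eq-5c-22-d3-d2}, tighten the LP to exactly $M_5$ at the correct vertex $(d_2,d_5) = (1/4, 1/4)$ with $a = 0$---and producing explicit dual multipliers that witness this---is the crux of the argument. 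All other verifications are routine LP calculations that can be automated in Mathematica, as the authors indicate.
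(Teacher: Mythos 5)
Your overall method is exactly the paper's: each family is a small linear program, the slack cases are dispatched by direct computation and the tight cases by exhibiting an explicit feasible dual solution and invoking weak duality. However, your triage of which LPs are tight and which are bounded away from the target is wrong in several places, and following the plan as written would fail. Families 2), 10) and 12), which you place in ``Regime A'' and propose to handle with crude certificates having ``ample slack'', are in fact tight: e.g.\ for family 2) the point $a=0$, $d_0=1/4$, $d_4=1/2$ is feasible with value $2d_4=1$, and for family 12) the point $a=0$, $(d_0,d_2,d_3,d_5)=(1/4,1/4,1/4,1/4)$ is feasible with value $\frac14\log 30$; in particular your claim that in these cases $\sum_i i d_i$ is forced below $r/2$ by an absolute constant is false (family 12) has $s+t=5$, so \eqref{eq-5c-idi} permits $\sum_i id_i=5/2$ at $a=0$). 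These are precisely the cases where the paper must produce carefully tuned dual multipliers, and no ``short calculation with slack'' exists for them.

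Conversely, you identify families 13) and 14) as the crux, claiming the optimum is attained at $(d_2,d_5)=(1/4,1/4)$ with $a=0$. This is doubly mistaken: the extremal vertex for $r=5$ is $(d_0,d_2,d_3,d_5)=(1/4,1/4,1/4,1/4)$ (coming from two copies of $I_2$ and three of $\Odd$), and in the $(s,t)=(2,2)$ case one of the five sets is of type (a), so the budget constraint \eqref{eq-5c-22-idi1} (resp.\ \eqref{eq-5c-22-d3-2} together with \eqref{eq-5c-idi}) excludes that vertex and the LP optimum is strictly below $\frac14\log 30$ by an absolute constant --- which is why the paper disposes of 13) and 14) by a direct Mathematica computation rather than a duality argument. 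The genuinely delicate dual constructions occur in families 2), 5), 10), 11) and 12). Your sample certificate for family 1) is correct, but the plan needs the tight/slack classification repaired before the remaining thirteen verifications can be carried out.
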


\begin{proof}
For $1), 3)$ and $4)$, taking $\eps'=1/100$ in Mathematica yields $\sum_{i \in [4]}d_i \log i < 1 - 1/1000$, so we are done in these cases.
Given a linear maximisation (\emph{primal}) program:
\begin{itemize}
\item[] Maximise $\bm{c}^\intercal \bm{d}$ subject to $A\bm{d} \leq \bm{b}$ and $\bm{d} \geq \bm{0}$,
\end{itemize}
the dual minimisation program is: 
\begin{itemize}
\item[] Minimise $\bm{b}^\intercal \bm{y}$ subject to $A^\intercal \bm{y} \geq \bm{c}$ and $\bm{y} \geq \bm{0}$.
\end{itemize}
\medskip
\noindent
\textbf{Family 2)}
Taking the program represented by 2) as the primal, we have

\begin{blockarray}{rrrrrrrr}
& a & $d_0$ & $d_1$ & $d_2$ & $d_3$ & $d_4$ \\
\begin{block}{c(rrrrrr)l}
 $y_1$ &  & 1 & 1 & 1 & 1 & 1 & $\leq 1$  \\
 $y_2$ & 1 &  &  &  &  &  & $\leq \frac{1}{10}$  \\
 $y_3$ & 1 &  & 1 & 2 & 3 & 4 & $\leq 2 + \eps'$  \\
 $y_4$ &  &  &  &  & 1 & 1 & $\leq \frac{1}{2} + \eps'$  \\
 $y_5$ & -1 & -1 &  &  &  &  & $\leq -\frac{1}{2}+\eps'$  \\
 $y_6$ & -4 &  &  &  &  & -1 & $\leq -\frac{1}{2} + \eps'$.  \\
\end{block}
\begin{block}{cccccccc}
 & $\geq 0$ & $\geq 0$ & $\geq 0$ & $\geq 1$ & $\geq \log 3$ & $\geq 2$ &   \\
\end{block}
\end{blockarray}
$\quad\quad\leftrightarrow\quad\quad$
\begin{blockarray}{rrr}
& $\bm{d}^\intercal$ & \\
\begin{block}{c(c)l}
 $\bm{y}$ & A & $\leq \bm{b}$  \\
\end{block}
\begin{block}{ccc}
 & $\geq \bm{c}^\intercal$ &   \\
\end{block}
\end{blockarray}

A feasible solution to the dual program is $\bm{y}^* = (0,0,\frac{\log 3}{3},0,0,\frac{4\log 3}{3}-2)$. The objective function value of the dual at $\bm{y}^*$ is
$$
\bm{b}^\intercal\bm{y}^* = (2+\eps')\frac{\log 3}{3} + \left(-\frac{1}{2}+\eps'\right)\left(\frac{4\log 3}{3}-2\right) = 1+ \left(\frac{5\log 3}{3}-2\right)\eps' \leq 1 + \eps' \leq 1 + \eps.
$$
By the weak duality theorem, any feasible solution $\bm{x}$ to the primal maximisation linear program satisfies $\bm{c}^\intercal \bm{x} \leq \bm{b}^\intercal \bm{y}^* \leq 1 + \eps$.
Thus the family in 2) is $(\eps,4)$-sufficient.

\medskip
\noindent
\textbf{Family 5)}
The family yields the following primal and dual linear programs.

\begin{blockarray}{rrrrrrrr}
& a & $d_0$ & $d_1$ & $d_2$ & $d_3$ & $d_4$ \\
\begin{block}{c(rrrrrr)l}
			$y_1$&  & 1 & 1 & 1 & 1 & 1 & $\le 1$ \\ 
			$y_{2}$&  &  &  &  & 1 & 1 &  $\le \frac{1}{2}+\ep'$\\
			$y_3$& 1 &  & 1 & 2 & 3 & 4 &  $\le 2+\ep'$ \\
			$y_4$& 1 &  &  &  &  &  &  $\le \frac{1}{10}$\\ 
			$y_{5}$& $-\frac{5}{2}$ &  &  & -1 &  &  &  $\le -\frac{1}{2}+2\ep'$\\
			$y_{6}$& -3 &  &  &  & 1 &  &  $\le \ep'$.\\
			\end{block}
\begin{block}{cccccccc}
			& $\ge 0$ & $\ge 0$ & $\ge 0$ & $\ge 1$ & $\ge\log 3$ & $\ge 2$ &\\
\end{block}
\end{blockarray}

A feasible solution to the dual program is $\bm{y}^* = (0,0,\frac{1}{2},0,0,\log 3 - \frac{3}{2})$.
The objective function value of the dual at $\bm{y}^*$ is
$$
\bm{b}^\intercal\bm{y}^* = (2+\eps')\cdot \frac{1}{2} + \eps'\left(\log 3 - \frac{3}{2}\right) = 1 + (\log 3 - 1)\eps' \leq 1 + \eps' \leq 1 + \eps.
$$
We are again done by the weak duality theorem.

\medskip
\noindent
Now we let $r=5$ and consider families 6)--14). For 6)--9), 13) and 14), taking $\eps'=1/100$ in Mathematica yields $\sum_{i \in [5]}d_i\log i < \frac{1}{4}\log 30 - 1/10^4$, so we are done in these cases.
It remains to consider 10)--12).

\medskip
\noindent
\textbf{Family 10)}
The family yields the following primal and dual linear programs.

\begin{blockarray}{rrrrrrrrr}
				& a & $d_0$ & $d_1$ & $d_2$ & $d_3$ & $d_4$ & $d_5$ & \\ 
\begin{block}{c(rrrrrrr)l}
					$y_1$&  & 1 & 1 & 1 & 1 & 1 & 1 & $\le 1$ \\ 
				$y_2$& 1 &  &  &  &  &  &  & $\le \frac{1}{5}$\\ 
				$y_3$& 1 &  & 1 & 2 & 3 & 4 & 5 & $\le \frac{5}{2}+\ep'$ \\
				$y_{4}$& -2 &  &  &  &  &  & -1 & $\le -\frac{1}{4}+2\ep'$\\
				$y_{5}$& -3 &  &  &  & 1 &  &  & $\le \frac{1}{4}+4\ep'$\\
				$y_{6}$& $-\frac{1}{4}$ & -1 & -1 &  &  &  &  & $\le -\frac{1}{4}+2\ep'$\\
				$y_{7}$& $-\frac{1}{2}$ &  &  & -1 &  &  &  & $\le -\frac{1}{4}+\ep'$.\\
			\end{block}
\begin{block}{ccccccccc}
& $\ge 0$ & $\ge 0$ & $\ge 0$ & $\ge 1$ & $\ge\log 3$ & $\ge 2$ & $\ge\log 5$ & \\
\end{block}
\end{blockarray}

A feasible solution to the dual program is
$$
\bm{y}^* = \left(4x,0,\frac{1}{2}-x,\frac{5}{2}-\log 5 - x, \log 3 - \frac{3}{2} - x, 4x,2x\right)
$$
where $x = \frac{3}{2}\log 3 - \log 5$.
The objective function value of the dual at $\bm{y}^*$ is
$$
\bm{b}^\intercal\bm{y}^* = \frac{1}{4}\log 30 + \left(-\frac{1}{2} + 22\log 3 - 14\log 5\right)\eps' < \frac{1}{4}\log 30 + 2\eps' \leq \frac{1}{4}\log 30 + \eps.
$$
So the family in 10) is $(\eps,5)$-sufficient.

\medskip
\noindent
\textbf{Family 11)}
The family yields the following primal and dual linear programs.

\begin{blockarray}{rrrrrrrrr}
				& a & $d_0$ & $d_1$ & $d_2$ & $d_3$ & $d_4$ & $d_5$ & \\ 
\begin{block}{c(rrrrrrr)l}
				$y_1$&  & 1 & 1 & 1 & 1 & 1 & 1 & $\le 1$ \\ 
				$y_2$& 1 &  &  &  &  &  &  & $\le \frac{1}{5}$\\ 
				$y_3$& 1 &  & 1 & 2 & 3 & 4 & 5 & $\le \frac{5}{2}+\ep'$ \\
				$y_{4}$& -2 &  &  &  &  &  & -1 & $\le -\frac{1}{4}+2\ep'$\\
				$y_{5}$& -3 &  &  &  & 1 &  &  & $\le \frac{1}{4}+4\ep'$\\
				$y_{6}$& $-\frac{1}{4}$ &  &  & -1 &  &  &  & $\le -\frac{1}{4}+2\ep'$\\
				$y_{7}$& $-\frac{1}{2}$ & -1 &  &  &  &  &  & $\le -\frac{1}{4}+\ep'$\\
			\end{block}
\begin{block}{ccccccccc}
				& $\ge 0$ & $\ge 0$ & $\ge 0$ & $\ge 1$ & $\ge\log 3$ & $\ge 2$ & $\ge\log 5$ & \\
\end{block}
\end{blockarray}

A feasible solution to the dual is
$$
\bm{y}^* = \left( 4x, 0, \frac{1}{2}-x, \frac{5}{2}-\log 5 - x, \log 3-\frac{3}{2}-x,4x,4x\right)
$$
where $x = 2\log 3 - \frac{4}{3}\log 5$.
The objective function value of the dual at $\bm{y}^*$ is
$$
\bm{b}^\intercal\bm{y}^* = \frac{1}{4}\log 30 + \left(-\frac{1}{2} + 6\log 3 - \frac{10}{3}\log 5\right)\eps' < \frac{1}{4}\log 30 + 2\eps' \leq \frac{1}{4}\log 30 + \eps.
$$
So the family in 11) is $(\eps,5)$-sufficient.

\medskip
\noindent
\textbf{Family 12)}
The family yields the following primal and dual linear programs.

\begin{blockarray}{rrrrrrrrr}
				& a & $d_0$ & $d_1$ & $d_2$ & $d_3$ & $d_4$ & $d_5$ & \\ 
\begin{block}{c(rrrrrrr)l}
			$y_1$&  & 1 & 1 & 1 & 1 & 1 & 1 & $\le 1$ \\ 
			$y_2$& 1 &  & 1 & 2 & 3 & 4 & 5 & $\le \frac{5}{2}+\ep'$ \\
			$y_3$& 1 &  &  &  &  &  &  & $\le \frac{1}{5}$\\ 	
			$y_4$& $-\frac{1}{2}$ & -1 &  &  &  &  &  & $\le -\frac{1}{4}+\ep'$\\ 	
			$y_{5}$& -2 &  &  & -1 &  &  & -1 & $\le -\frac{1}{2}+\ep'$\\
			$y_{6}$& $-\frac{5}{2}$ &  & 1 &  & 1 & 1 &  & $\le \frac{1}{4}+3\ep'$\\
			$y_7$& $-\frac{1}{2}$ &  &  & 1 &  &  &  & $\le \frac{1}{4}+\ep'$\\
			\end{block}
\begin{block}{ccccccccc}
			& $\ge 0$ & $\ge 0$ & $\ge 0$ & $\ge 1$ & $\ge\log 3$ & $\ge 2$ & $\ge\log 5$ & \\
\end{block}
\end{blockarray}

A feasible solution to the dual is
$$
\bm{y}^* = \left( y_1,\frac{\log 5}{5}-\frac{y_1}{5},0,y_1,0,\log 3-\frac{3}{5}\log 5 - \frac{2}{5}y_1, 1-\frac{2}{5}\log 5 -\frac{3}{5}y_1\right)
$$
where $y_1=1/5$.
The objective function value of the dual at $\bm{y}^*$ is
$$
\bm{b}^\intercal\bm{y}^* = \frac{1}{4}\log 30 + \left(3\log 3-2\log 5+1-y_1\right)\eps' < \frac{1}{4}\log 30 + \eps' \leq \frac{1}{4}\log 30 + \eps.
$$
So the family in 12) is $(\eps,5)$-sufficient.
This completes the proof.
\end{proof}

\end{document}